  \def\ZR{{\mathbb R}}
\def\beq{\begin{equation}}
\def\eeq{\end{equation}}
\def\be{\begin{equation}}
\def\ee{\end{equation}}
\def\beqar{\begin{eqnarray}}
\def\eeqar{\end{eqnarray}}
\def\ber{\begin{eqnarray}}
\def\eer{\end{eqnarray}}
\def\berb{\begin{eqnarray*}}
\def\eerb{\end{eqnarray*}}
\def\det{\mathop{\rm det}\nolimits}
\def\div{\mathop{\rm div}\nolimits}
\def\tr{\mathop{\rm tr}\nolimits}
\def\parder#1#2{\frac{\partial #1}{\partial #2}}
\def\norm#1.#2.{\|#1\|_{#2}}
\def\Norm#1.#2.{\big\|#1\big\|_{#2}}
\def\NOrm#1.#2.{\bigg\|#1\bigg\|_{#2}}
\def\NORm#1.#2.{\Big\|#1\Big\|_{#2}}
\def\NORM#1.#2.{\Bigg\|#1\Bigg\|_{#2}}
\newcommand{\eps}{\varepsilon}
\def\vec#1{{\mathchoice{\mbox{\boldmath$\displaystyle#1$}}
{\mbox{\boldmath$\textstyle#1$}}
{\mbox{\boldmath$\scriptstyle#1$}}
{\mbox{\boldmath$\scriptscriptstyle#1$}}}}
\newcommand{\sym}{\mathop{\rm sym}\nolimits}
\def \0b{{\hbox{\boldmath $0$}}}
\newcommand{\mbb}{{\hbox{\bf m}}} \newcommand{\nbb}{{\hbox{\bf n}}}
 \newcommand{\rbb}{{\hbox{\bf r}}}
\newcommand{\ubb}{{\hbox{\bf u}}} 
\newcommand{\wbb}{{\hbox{\bf w}}}
\newcommand{\ab}{\vec{a}} 
\newcommand{\eb}{\vec{e}} 
\newcommand{\gb}{\vec{g}}
 \newcommand{\rb}{\vec{r}}
\newcommand{\ub}{\vec{u}} \newcommand{\vb}{\vec{v}}
\newcommand{\Abb}{{\bf A}} 
\newcommand{\Ebb}{{\bf E}} 
\newcommand{\Gbb}{{\bf G}} 
\newcommand{\Ibb}{{\bf I}}
\newcommand{\Qbb}{{\bf Q}}
 \newcommand{\Xb}{\vec{X}}
\def \gamb{\vec{\gamma}}
\def \nub{\vec{\nu}} 
 \def \rhob{\vec{\rho}}
\def \varrhob{\vec{\varrho}}
\def \vphib{\vec{\varphi}} \def \chib{\vec{\chi}}
\def \Gamb{\vec{\Gamma}} 
\def \Thetab{\vec{\Theta}} 
\def \Xib{\vec{\Xi}}
\newcommand{\cC}{{\cal C}}
 \newcommand{\cP}{{\cal P}}
 \newcommand{\cV}{{\cal V}}
 \newcommand{\pt}{{\tilde p}}
 \newcommand{\Pt}{{\tilde P}}
 \newcommand{\Vt}{{\tilde V}}
\def \utb{\vec{\tilde{u}}}
\def \vtb{\vec{\tilde{v}}}
\def \Qtbb{{\bf \tilde{Q}}}
\newcounter{primjer}[section]
\newcounter{tvrdnja}[section]
\newcounter{zadatak}[section]
\newcommand{\eff}{{\rm eff}}
\newcommand{\lambdat}{\tilde{\lambda}}
\newcommand{\mut}{}
\newcommand{\hnaj}{1} 
\newcommand{\hnatri}{1} 
\newcommand{\koefdva}{\beta}  
\newcommand{\tcC}{{\cal \tilde{C}}}
\newcommand{\initial}{{\rm in}}
\newtheorem{theorem}{Theorem}
\newtheorem{lemma}[theorem]{Lemma}
\newtheorem{corollary}[theorem]{Corollary}
\theoremstyle{plain}
\newtheorem{remark}[theorem]{Remark}
\newtheorem{assumptions}[theorem]{Assumption}
\long\def\Hidden#1{\relax}
\let\Qtbb=\Qbb
\def\ep{\varepsilon}
\def\p{\partial}
\def\O{\Omega}
\def\o{\omega}
\newtheorem{proposition}[theorem]{Proposition}
\newcommand{\ogamb}{{\bf \overline{\gamb}}}
\newcommand{\ogamma}{\overline{\gamma}}
\newcommand{\Vred}{{V}}
\newcommand{\tVred}{{\Vt}}
\let\ub=\ubb
\let\vb=\vbb
\let\utb=\utbb
\let\vtb=\vtbb
\let\eb=\ebb
\title{Derivation of a poroelastic flexural shell model\thanks{ A.M. was partially supported by the  Programme Inter Carnot Fraunhofer from BMBF (Grant 01SF0804) and ANR.}}
\author{
{\bf Andro Mikeli\'c}
\\ Universit\'e de Lyon, CNRS UMR 5208,\\
  Universit\'e
Lyon 1, Institut Camille Jordan, \\   43, blvd. du 11 novembre 1918,
 69622 Villeurbanne Cedex, France \\ E-mail: {\tt Andro.Mikelic@univ-lyon1.fr} \and {\bf Josip Tamba\v ca}  \\ 	
Department of Mathematics\\
University of Zagreb\\ Bijeni\v cka 30,
10000 Zagreb, Croatia}
\date{\today}
\begin{document}

\maketitle

\tableofcontents

\newpage

\begin{abstract} In this paper we investigate the limit behavior of the solution to quasi-static Biot's equations in thin poroelastic flexural shells as the thickness  of the shell tends to zero and  extend  the results obtained for the poroelastic plate  by Marciniak-Czochra and Mikeli\'c in \cite{AMCAM:2012}. We choose Terzaghi's time corresponding to the shell thickness and obtain the strong convergence of the three-dimensional solid displacement, fluid pressure and total poroelastic stress to the solution of the new class of  shell equations.

The  derived  bending equation is coupled with the pressure equation and it contains the bending moment due to the variation in pore pressure across the shell thickness. The  effective  pressure equation   is parabolic only in the normal direction.
As additional terms it  contains the time derivative of the middle-surface flexural strain.

Derivation of the model presents an extension of the results on the derivation of classical linear elastic  shells by Ciarlet and collaborators  to the poroelastic shells case. The new technical points include determination of the $2\times 2$ strain matrix, independent of the vertical direction, in the limit of the rescaled strains and identification of the pressure equation.  This term is not necessary to be determined in order to derive the classical flexural shell model.
\bigskip

{\bf Keywords:} Thin poroelastic shell, $\quad$ Biot's quasi-static equations, $\quad$  bending-flow coupling, $\quad $ higher order degenerate elliptic-parabolic systems, $\quad$ asymptotic methods.

{\bf AMS classcode}
MSC 35B25, $\quad$ MSC  74F10, $\quad$ MSC 74K25, $\quad$ MSC 74Q15, $\quad$ MSC 76S.

\end{abstract}

\section{Introduction}
A shell is a three dimensional body, defined by its middle surface $\mathcal{S}$ and a neighborhood of a small dimension (the thickness) along the normals to it. The shell is said to be thin when the thickness is much smaller then the minimum of its two radii of the curvature and of the characteristic length of the middle surface $\mathcal{S}$.

The basic engineering theory for the bending of thin shells is known as Kirchoff-Love theory or Love's first approximation. The equations were derived by the so called "direct method" (see \cite{Naghdi} and references therein) and not from the three dimensional equations. A derivation from the three dimensional, at the rigor of the continuum mechanics, is due to Novozhilov and we refer again to \cite{Naghdi} for both linear and nonlinear models.

A  different approach  to deriving  the shell equations is to suppose that the middle surface $\mathcal{S}$ is given as
$\mathcal{S}= \mathbf{X} ({\overline \omega})$, where $\omega \subset \mathbb{R}^2$ be an open bounded and simply connected set
with Lipschitz-continuous boundary $\partial \omega$ and $\mathbf{X} :{\overline \omega} \to \mathbb{R}^3 $ is a smooth injective immersion (that is $\mathbf{X}\in C^3$ and $3\times 2$ matrix $\nabla \mathbf{X}$ is of rank two).  The vectors $\mathbf{a}_\alpha(y) =\partial_\alpha \mathbf{X} (y)$, $\alpha=1,2$, are linearly independent for all $y\in {\overline \omega}$ and form the covariant basis of the tangent plane to the $2$-surface $\mathcal{S}$.

Then the reference configuration of the shell is of the form $\mathbf{r} ({\overline \Omega}^\ep )$, $\ep>0$, where
$\Omega^\ep = \o \times (-\ep/2 , \ep/2 )$ and
\begin{equation}\label{refconfig}
  \mathbf{r} =\mathbf{r} (y, x_3 ) = \mathbf{X} (y) + x_3  \mathbf{a}_3(y), \quad \mathbf{a}_3(y)=\frac{\mathbf{a}_1(y)\times\mathbf{a}_2(y)}{|\mathbf{a}_1(y)\times\mathbf{a}_2(y)|}.
\end{equation}
The associated equations of linearized three dimensional elasticity are then written in curvilinear coordinates with respect
to $(y_1 , y_2 , x_3 )\in {\overline \O}^\ep$. Their solutions represent the covariant components of the displacement field in the reference configuration $\mathbf{r} ({\overline \Omega}^\ep )$.

 Then Ciarlet and collaborators developed  the asymptotic analysis approach where the normal direction variable $x_3 \in (-\ep/2 , \ep/2 )$ was scaled by setting $y_3= x_3 / \ep $. This change of variables transforms the PDE to a singular perturbation problem in curvilinear coordinates on a fixed cylindrical domain. With such approach Ciarlet and collaborators have established the norm closeness between the  solution of the original three dimensional elasticity equations and the Kirchhoff-Love two dimensional flexural and membrane shell equations, in the limit as $\ep \to 0.$ For details, we refer  to the articles \cite{CiarletLods} and \cite{CiarletLodsMiara} and to the books \cite{CiarletDG} and \cite{Ciarlet3}. For the  complete asymptotic expansion we refer to the review paper \cite{DFY04} by Dauge et al.
Further generalizations to nonlinear  shells exist and were obtained using $\Gamma -$ convergence. We limit our discussion to the linear shells.

In the everyday life we  frequently meet shells (and other low dimensional bodies) which are saturated by a fluid. Many living tissues are fluid-saturated thin bodies like bones,  bladders, arteries and diaphragms and they are  interpreted as poroelastic  plates or shells.  Furthermore,  industrial filters are an example of poroelastic  plates and shells.

Our goal is to extend the above mentioned theory to the {\bf poroelastic} shells. In the case of the poroelastic plates, derivation of the mathematical model was undertaken in \cite{AMCAM:2012}.  As in the case of plates, these are the shells consisting of an elastic skeleton (the solid phase) and pores saturated by a viscous fluid (the fluid phase). Interaction between the two phases leads to an {\bf overall} or   {\bf effective} behavior described by the poroelasticity equations instead of the Navier elasticity equations, coupled with the mass conservation equation for the pressure field.  The equations form Biot's poroelasticity PDEs and can be found in \cite{Biot55}, \cite{Biot63} and in the selection  of Biot's   publications \cite{TOL}.

The effective  linear Biot's model corresponds to the homogenization of the complicated pore level fluid-structure interaction problem  based on
the continuum mechanics first principles, i.e. the Navier equations for the solid structure and by the Navier-Stokes equations for the flow. Small deformations are supposed and the interface between phases is linearized. The small parameter of the problem is  the ratio between characteristic pore size  and the domain size.
 If, in addition, we consider a periodic porous medium with connected fluid and solid phases, then  the two-scale poroelasticity equations can be obtained using  {\it formal two-scale expansions} in the small parameter, applied to the pore level fluid-structure equations.  For  details we refer  to the book \cite{SP-80}, the review \cite{Au-97} and the references therein.

 Convergence of the homogenization process for a given frequency was obtained in \cite{NG2}, using the {\it two-scale convergence technique}.   Convergence in space and time variables was proved by Mikeli\'c and  collaborators in  \cite{ClFGM} and \cite{FM:03}.  The upscaling result was presented in detail in \cite{AMCAM:2012} and we avoid to repeat it here. We point out that the upscaled model depends on the particular time scale known as {\it Terzaghi's time} $T_T$.
It is equal to the ratio between the viscosity times the characteristic length squared and the shear modulus of the solid structure multiplied by the permeability. If the characteristic time is much longer than $T_T$ than  flow dominates vibrations and the acceleration and memory effects can be neglected. The model is then  the quasi-static Biot model. For its derivation  from the first principles using homogenization techniques see  \cite{MW:10}.

For the direct continuum mechanics approach to Biot's equations, we refer to the monograph of Coussy \cite{Coussy}.
Using a direct approach, a model for a spherical poroelastic shell is proposed  in  \cite{TabPul96}.



In this paper we follow the approach of Ciarlet, Lods and Miara, as presented in the textbooks \cite{CiarletDG} and \cite{Ciarlet3}, and rigorously develop equations for a poroelastic flexural shell.

Successful recent approaches to the derivation of linear and nonlinear shell models
use the elastic energy functional. In our situation, presence of the flow makes the problem quasi-static, that  is time-dependent, and non-symmetric.
The  equations for the effective solid skeleton displacement  contain the pressure gradient and have the structure of a generalized Stokes system,
with the velocity field replaced by displacement. Mass conservation equation is parabolic in the pressure and contains the time derivative of the volumetric strain.

We recall that the quasi-static Biot system is well-posed only if there is a relationship between Biot parameters multiplying the pressure gradient in the displacement system and the time derivative of the divergence of the displacement in the pressure equation. We  were able to obtain in \cite{AMCAM:2012} the corresponding energy estimate. Similar estimates, for the equations in the curvilinear coordinates, will be obtained here.

In addition, there exists a major difference, with respect to the limit of the normalized $e_{33}$ term,
 compared to a classical derivation of the Kirchhoff-Love shell. In our poroelastic case, the limit
also contains the pressure field.
 Furthermore, the pressure oscillations persist and we prove the regularity and uniqueness for the limit problem. As in the poroelastic plate case,
it has a richer structure than the classical bending equation. We expect more complex  time behavior  in this model.


\section{Geometry of Shells and Setting of the Problem}
\setcounter{equation}{0}

We study the deformation and the flow in a poroelastic shell ${\tilde \Omega}_L^{\ell}=\mathbf{r} ({ \Omega}^\ell_L )$, $L, \ell>0$, where the injective mapping $\mathbf{r} $ is given by (\ref{refconfig}), for $x_3 \in (-\ell /2 , \ell /2 )$ and $( y_1 , y_2 )\in \omega_L$, diam $(\omega_L)=L$. We recall
the middle surface
$\mathcal{S}= \mathbf{X} ({\overline \omega}_L)$ is the image by a smooth injective immersion $\mathbf{X}$ of an open bounded and simply connected set  $\omega_L \subset \mathbb{R}^2$,
with Lipschitz-continuous boundary $\partial \omega_{L}$.
  We use the linearly independent vectors $\mathbf{a}_\alpha(y) =\partial_\alpha \mathbf{X} (y)$, $\alpha=1,2$, to form a covariant basis of the tangent plane to the $2$-surface $\mathcal{S}$.
The contravariant basis of the same plane is given
by the vectors $\mathbf{a}^\alpha(y)$ defined by
$
\mathbf{a}^\alpha(y) \cdot \mathbf{a}_\beta(y)= \delta^\alpha_\beta.
$
We extend these bases to the basis of the whole space $\mathbb{R}^3$ by
the vector $\mathbf{a}_3$ given  in (\ref{refconfig}) ($\mathbf{a}^3=\mathbf{a}_3$).
Now we collect the local contravariant and covariant bases into
the matrix functions
\begin{equation}\label{Q}
\mathbf{Q}= {\left[ \begin{array}{ccc} \mathbf{a}^1 & \mathbf{a}^2 & \mathbf{a}^3
\end{array}\right]}, \qquad \mathbf{Q}^{-1} = {\left[ \begin{array}{c} \mathbf{a}_1^T\\ \mathbf{a}_2^T\\
\mathbf{a}_3^T
\end{array}\right]}.
\end{equation}
The first fundamental form of the surface $\mathcal{S}$, or the metric tensor, in covariant
$\mathbf{A}_c=(a_{\alpha \beta})$ or contravariant $\mathbf{A}^c=(a^{\alpha
\beta})$ components  are given respectively by
$$
a_{\alpha\beta} = \mathbf{a}_\alpha\cdot\mathbf{a}_\beta,
\quad
a^{\alpha\beta} = \mathbf{a}^\alpha\cdot\mathbf{a}^\beta, \quad \alpha, \beta =1,2.
$$
Note here that because of continuity of $\mathbf{A}^c$ and compactness
of $ \overline{\omega}_L$, there are constants $M^c\geq m^c>0$ such that
\begin{equation}\label{mc}
m^c \mathbf{x} \cdot \mathbf{x} \leq \mathbf{A}^c(y) \mathbf{x} \cdot \mathbf{x} \leq M^c \mathbf{x} \cdot
\mathbf{x},
\qquad \mathbf{x} \in \mathbb{R}^3, y\in \overline{\omega}_{L}.
\end{equation}
These estimates, with different constants, hold for $\mathbf{A}_c$ as
well, as it is  the inverse of $\mathbf{A}^c$.
The second fundamental form of the surface  $\mathcal{S}$, also known as
the curvature tensor, in covariant $\mathbf{B}_c=(b_{\alpha \beta})$ or
mixed components $\mathbf{B} = (b^{\beta}_{\alpha})$ are given
respectively by
$$
b_{\alpha \beta} = \mathbf{a}^3 \cdot \partial_\beta \mathbf{a}_\alpha = -
\partial_\beta \mathbf{a}^3 \cdot \mathbf{a}_\alpha,
\quad
b^{\beta}_{\alpha}= \sum_{\kappa=1}^2 a^{\beta \kappa} b_{\kappa \alpha},\quad \alpha, \beta =1,2.
$$
The Christoffel symbols $\mathbf{\Gamma}^\kappa$ are defined by
$$
\Gamma^\kappa_{\alpha \beta}=\mathbf{a}^\kappa \cdot \partial_\beta
\mathbf{a}_\alpha=-\partial_\beta \mathbf{a}^\kappa \cdot \mathbf{a}_\alpha, \quad \alpha, \beta, \kappa =1,2.
$$
We will sometime use $\Gamma^3_{\alpha \beta}$ for $b_{\alpha
\beta}$. The area element along $\mathcal{S}$ is $\sqrt{a}dy$, where
$a:=\det{\mathbf{A}_c}$. By (\ref{mc}) it is uniformly positive, i.e.,
there is $m_a >0$ such that
\begin{equation}\label{a0}
0 < m_a \leq a(y), \qquad y \in \overline{\omega}_{L}.
\end{equation}
We also need the covariant derivatives $b^\kappa_\beta|_\alpha$
   which are defined by
\begin{equation}\label{symbb}
b^\kappa_\beta|_\alpha= \partial_\alpha b^\kappa_\beta
+\sum_{\tau=1}^2 (\Gamma^\kappa_{\alpha \tau} b^\tau_\beta - \Gamma^\tau_{\beta
\alpha} b^\kappa_\tau),
  \quad \alpha, \beta, \kappa =1,2.
\end{equation}
 In order to describe our results we also need the following differential operators:
\begin{gather}
\gamma_{\alpha\beta}(\vb) =    \frac{1}{2}(\partial_\alpha v_\beta + \partial_\beta v_\alpha) - \sum_{\kappa=1}^2 \Gamma^\kappa_{\alpha\beta} v_\kappa  -  b_{\alpha\beta} v_3, \qquad \alpha, \beta=1,2, \label{symbgam}\\
\rho_{\alpha\beta}(\vb) = \partial_{\alpha\beta} v_3 - \sum_{\kappa=1}^2 \Gamma^\kappa_{\alpha\beta} \partial_\kappa v_3 + \sum_{\kappa=1}^2 b^\kappa_\beta (\partial_\alpha v_\kappa - \sum_{\tau=1}^2\Gamma^\tau_{\alpha\kappa} v_\tau) + \sum_{\kappa=1}^2 b^\kappa_\alpha (\partial_\beta v_\kappa - \sum_{\tau=1}^2\Gamma^\tau_{\beta\kappa} v_\tau) \notag \\
 \quad + \sum_{\kappa=1}^2 b^\kappa_\alpha|_\beta v_\kappa - \sum_{\kappa=1}^2 b^\kappa_\alpha b_{\kappa\beta} v_3, \qquad \alpha, \beta =1,2,
\label{symbrho} \\
n_{\alpha\beta}|_{\beta} = 
 \partial_\beta n_{\alpha\beta} + \sum_{\kappa=1}^2\Gamma^\alpha_{\beta\kappa} n_{\beta\kappa} +\sum_{\kappa=1}^2 \Gamma^\beta_{\beta\kappa} n_{\alpha\kappa}, 
  \qquad \alpha , \beta =1,2,
  \notag \\
  n_{\alpha\beta}|_{\alpha\beta} = 
   \partial_\alpha (
   n_{\alpha\beta}|_\beta) + \sum_{\kappa=1}^2\Gamma^\kappa_{\alpha\kappa} 
    n_{\alpha\beta}|_\beta,  \qquad \alpha , \beta =1,2, \notag  
\end{gather}
   defined for smooth vector fields $\mathbf{v}$ and tensor fields $\nbb$.
\vskip4pt
The upper face (respectively lower face) of the shell  ${\tilde \Omega}^\ell_L$ is  ${\tilde \Sigma}^{\ell}_L =
\mathbf{r} ( \omega_L \times \{ x_3 =\ell /2 \}) =  \mathbf{r} (\Sigma^\ell_L )$
  (respectively ${\tilde \Sigma}^{-\ell}_L = \mathbf{r} ( \omega_L \times \{ x_3 =-\ell /2 \})= \mathbf{r} (\Sigma^{-\ell}_L )$. ${\tilde \Gamma}^\ell_L$ is the lateral boundary, ${\tilde \Gamma}^\ell_L = \mathbf{r} (\partial \omega_L \times (-\ell /2 , \ell /2 ))=\mathbf{r} (\Gamma^\ell_L) $.
We recall that the small parameter is the ratio between the shell thickness and the characteristic horizontal length is $\ep= \ell / L \ll 1$.

\begin{table}[ht]
 \caption{Parameter and unknowns  description}\label{Data}
 \begin{tabular}{|l|l|} \hline\hline
\emph{SYMBOL} & \emph{ QUANTITY }
  \\
\hline   $\mu$ & shear modulus  (Lam\'e's second parameter)   \\
\hline    $\lambda$ & Lam\'e's first parameter  \\
\hline    $\koefdva_G $  & inverse of Biot's modulus     \\
\hline    $\alpha$   & effective stress coefficient    \\
\hline    $k$  & permeability    \\
\hline    $\eta$  & viscosity    \\
\hline  $L$ and $\ell$  & midsurface  length and shell  width, respectively      \\
\hline    $\varepsilon = \ell  / L$  & small parameter      \\
\hline     $T= \eta L_c^2 / (k \mu)$   &  characteristic Terzaghi's time    \\
\hline    $U$   & characteristic displacement      \\
\hline    $P=U \mu/L$  & characteristic  fluid pressure  \\
 \hline   $\mathbf{u} = (u_1 , u_2 , u_3 )$ & solid phase displacement \\
 \hline   $p$ &  pressure \\
 \hline
\end{tabular}
\end{table}
\noindent
We note that Biot's diphasic equations describe behavior of the system at so called Terzaghi's time scale $T=\eta L^2_c / (k \mu ),$ where $L_c$ is the characteristic domain size, $\eta$ is dynamic viscosity, $k$ is permeability and $\mu$ is the
 shear modulus. For the list of all parameters see  Table \ref{Data}.

Similarly as in \cite{AMCAM:2012}, we chose as the characteristic length
 $L_c =\ell$, which leads  to the  Taber-Terzaghi  transversal time $T_{tab} = \eta \ell^2 / (k \mu)$.
 Another possibility was to choose the longitudinal time scaling with $T_{long} = \eta L^2 / (k \mu)$.
It would lead to different scaling in (\ref{Coweq2}) and the dimensionless permeability coefficient in (\ref{Bioteq2}) would not be $\ep^2$ but $1$. In the context of thermoelasticity, one has the same equations and Blanchard and Francfort rigorously derived in \cite{BlFr} the corresponding thermoelastic plate equations. We note that considering  the longitudinal time scale yields the effective model where the pressure (i.e. the temperature in thermoelasticity) is decoupled from the flexion.

Then
the quasi-static Biot equations  for the poroelastic body $ \tilde{\Omega}^{\ell}_L$
 take the following dimensional form:
\begin{gather}{\tilde \sigma} = 2\mu \mathbf{e} (\utb ) + ( \lambda \mbox{ div }\utb- \alpha {\tilde p} ) \Ibb  \; \mbox{ in } \; {\tilde \Omega}^\ell_L , \label{Coweq1} \\
-\mbox{ div } {\tilde \sigma} = -\mu \bigtriangleup \utb - (\lambda +\mu) \bigtriangledown \mbox{div }\utb +\alpha \bigtriangledown {\tilde p} =0 \; \mbox{ in } \; {\tilde \Omega}^\ell_L ,  \label{Coweq1a} \\
\frac{\partial }{ \partial { t}} (\koefdva_G {\tilde p} + \alpha \mbox{ div } \utb ) - \frac{ k}{\eta} \bigtriangleup {\tilde p} =0 \; \mbox{ in } \; {\tilde \Omega}^\ell_L .
\label{Coweq2}
\end{gather}
Note that $\mathbf{e}(\mathbf{u})=\sym\bigtriangledown \mathbf{u}$ and $\tilde \sigma$ is the stress tensor. All other quantities are defined in Table \ref{Data}.

We impose a given contact force ${\tilde \sigma} \nub = {\mathcal{\tilde P}}^{\pm \ell}_{L} $  and a given normal flux $ \displaystyle -\frac{k}{\eta}\frac{\p {\tilde p}}{\p x_3} =\tVred_{L}$
  at $x_3 = \pm \ell /2$. At the lateral boundary ${\tilde \Gamma}^\ell$ we impose a zero displacement and a zero normal flux.
Here $\nub$ is the outer unit normal at the boundary.  At initial time $t=0$ we prescribe the initial pressure $\pt^{\ell}_{L, \initial}$.

 Our goal is to extend the Kirchhoff-Love shell justification by Ciarlet, Lods et al and by Dauge et al to the poroelastic case.
\vskip5pt
We announce briefly the differential equations of the flexural poroelastic shell in dimensional form. Note that our mathematical result will be in the variational form and that differential form is only formal and written for reader's comfort.

\fbox{Effective dimensional equations:}
\vskip2pt


The model is given in terms of $\ub^\eff: \omega_L \to \ZR^3$ which is the vector of components of the displacement of the middle surface of the shell in the contravariant basis and $p^\eff : \Omega^\ell_L \to \ZR$ which is the pressure in the 3D shell.  Let us denote
the bending moment (contact couple) due to the variation in pore pressure across the plate
thickness by
\begin{equation}\label{moment}
\mbb = \frac{\ell^3}{12}  \tcC^c (\Abb^c \rhob(\ub^\eff)) \Abb^c +  \frac{2 \mu  \alpha}{\lambda +2\mu} \int_{-\ell/2}^{\ell/2} y_3 p^\eff dy_3  \Abb^c,
\end{equation}
where  ${\rhob} (\cdot )$ is given by  (\ref{symbrho}) and $\tcC^c$ is the elasticity tensor, usually appearing in the classical shell theories, given by
\begin{gather*}
\tcC^c \Ebb = 2\mu \frac{ \lambda}{\lambda+2\mu} \tr{(\Ebb)} \Ibb + 2\mu \Ebb, \qquad \Ebb \in  M^{3\times 3}_{\sym} .
\label{effcoef_dim}\end{gather*}
Then the model
in the differential formulation reads as follows:
\begin{equation}\label{main_druga_dim_4}
\aligned
&\sum_{\beta=1}^2\left(-(n_{\alpha\beta}+\sum_{\kappa=1}^2 b^\alpha_\kappa m_{\kappa\beta})|_{\beta} - \sum_{\kappa=1}^2 b_\kappa^\alpha (m_{\kappa\beta}|_{\beta}) \right)= (\cP_L^{+\ell})_\alpha+(\cP_L^{-\ell})_\alpha \mbox{ in } \omega_L,\quad \alpha=1,2,\\
&\sum_{\alpha,\beta=1}^2 \left( m_{\alpha\beta}|_{\alpha\beta} - \sum_{\kappa=1}^2 b^\kappa_\alpha b_{\kappa\beta} m_{\alpha\beta} - b_{\alpha\beta}n_{\alpha\beta} \right)= (\cP_L^{+\ell})_3+(\cP_L^{-\ell})_3 \mbox{ in } \omega_L,\\
&\frac{1}{2}(\partial_\alpha u_\beta^\eff + \partial_\beta u^\eff_\alpha) - \sum_{\kappa=1}^2 \Gamma^\kappa_{\alpha\beta} u_\kappa^\eff  -  b_{\alpha\beta} u_3^\eff = 0 \mbox{ in } \omega_L, \qquad \alpha,\beta=1,2,\\
&u_i^\eff=0, i=1,2,3, \quad \parder{u_3^\eff}{\nub}=0 \mbox{ on } \partial\omega_L, \qquad  \mbox{for every } \; t\in (0,T) ,
\endaligned
\end{equation}
\begin{equation}\label{main_treca_dim_diff}
\aligned
&\left(\beta_G + \frac{\alpha^2}{\lambda+2\mu}\right) \parder{p^\eff}{t}    -   \alpha \frac{2\mu}{\lambda+2\mu} \Abb^c  :   \rhob(\parder{\ub^\eff}{t}) y_3   -  \frac{k}{\eta}  \frac{\partial^2 p^\eff}{\partial (y_3)^2}    =0\\
& \quad \mbox{ in } \; (0,T) \times \omega_L \times (-\ell/2, \ell/2), \\
& \frac{k}{\eta} \parder{p^\eff}{y_3} = - \Vred_{L}, \quad \mbox{ on } (0,T) \; \times \omega_L \times (\{-\ell/2 \} \cup \{ \ell/2 \} ),\\
&p^\eff = p^{\ell}_{L, \initial} \quad \mbox{given  at } \; t=0.
\endaligned
\end{equation}
Here $(\cP_L^{\pm\ell})_i, i=1,2,3$ are components of the contact force ${\mathcal \Pt}_L^{\pm\ell}\circ \rb$ at $\Sigma^{\pm\ell}_L$ in the covariant basis, $\Vred_{L}=\tVred_{L}\circ \Xb$, $p^{\ell}_{L, \initial}=\pt^{\ell}_{L, \initial} \circ \rb$.
Thus, the poroelastic flexural shell model in the differential formulation is given for unknowns $\{ \nbb, \mbb, \ub^\eff, p^\eff \} $ and by equations (\ref{moment}), (\ref{main_druga_dim_4}) and (\ref{main_treca_dim_diff}). The components of $\nbb$ are the contact forces, linked to the constraint $\gamma (\mathbf{u}^\eff)=0$, and being the Lagrange multipliers in the problem. The components of $\mbb$ are the contact couples.
The first two equations in (\ref{main_druga_dim_4}) can be found in the differential equation of the Koiter shell model (see \cite[Theorem 7.1-3]{Ciarlet3}). The third equation is the restriction of approximate
   inextensibility of the shell. The first equation in (\ref{main_treca_dim_diff}) is the evolution equation for the effective pressure with associated boundary and initial conditions in the remaining part of (\ref{main_treca_dim_diff}).
 Note also that the same model holds for the shell clamped only on the portion of the boundary, i.e., the boundary condition in the fourth equation in (\ref{main_druga_dim_4}) holds for a subset of $\partial \omega_L$ with positive measure.


 In subsection  \ref{subDE} we present the dimensionless form of the problem.  In subsection \ref{secEU} we recall existence and uniqueness result of the smooth solution for the starting problem.  Subsection \ref{S4} is consecrated to the introduction of the problem in curvilinear coordinates and the rescaled problem, posed on the domain $\Omega =\o \times (-1/2,1/2)$.  In subsection \ref{subConv}  we formulate the main convergence results. In Section~\ref{S5} we study the a priori estimates for the family of solutions. Then in Section  \ref{S6} we study convergence of the solutions to the rescaled problem, as $\ep \to 0$.
 In Appendix 
 we give properties of the metric and curvature tensors.

 \section{Problem setting in curvilinear coordinates and the main results}
\setcounter{equation}{0}

\subsection{Dimensionless equations}\label{subDE}

We introduce the dimensionless unknowns and variable by setting
\begin{gather*}
    \koefdva = \koefdva_G \mu ; \quad  P = \frac{\mu U}{L} ; \quad U \utb^\ep = \utb ;
     \quad {  T= \frac{\eta \ell^2 }{ k \mu} ;} \quad {\tilde \lambda}= \frac{\lambda}{\mu};  \\
    P {\tilde p}^\ep = {\tilde p} ; \quad {\tilde y} L =y; \quad {\tilde x}_3 L=x_3 ; \quad \mathbf{\tilde r} L =\mathbf{r}; \quad \mathbf{\tilde X} L =\mathbf{ X} ; \quad {\tilde t} T= t ; \quad {\tilde \sigma}^\ep  \frac{\mu U}{L} = {\tilde \sigma}.
\end{gather*}
After dropping wiggles in the coordinates and in the time,
 the system (\ref{Coweq1})--(\ref{Coweq2}) becomes
\begin{eqnarray}
&&-\div {\tilde \sigma}^\ep = - \bigtriangleup \utb^\ep - {\tilde \lambda } \bigtriangledown \mbox{div }\utb^\ep +\alpha \bigtriangledown {\tilde p}^\ep =0 \; \mbox{ in }  (0,T) \times {\tilde \Omega}^\ep, \label{Bioteq1}
\\
&&{\tilde \sigma}^\ep = 2 \mathbf{e} (\utb^\ep ) + ( {\tilde \lambda} \mbox{ div }\utb^\ep - \alpha {\tilde p}^\ep ) \Ibb  \; \mbox{ in } \;  (0,T) \times {\tilde \Omega}^\ep, \label{Sig} \\
&&\frac{\partial }{ \partial t} (\koefdva {\tilde p}^\varepsilon + \alpha \mbox{ div } \utb^\varepsilon ) -{  \ep^2 } \triangle {\tilde p}^\varepsilon =0 \quad \mbox{ in } \;  (0,T) \times {\tilde \Omega}^ \varepsilon, \label{Bioteq2}
\end{eqnarray}
where  $\utb^\varepsilon=({\tilde u}_1^\varepsilon,{\tilde u}_2^\varepsilon,  {\tilde u}_3^\varepsilon)$ denotes the dimensionless displacement field and ${\tilde p}^\varepsilon $ the dimensionless pressure. We study a shell ${\tilde \Omega}^\ep$ with thickness $\varepsilon =\ell /L$ and section $\omega =\o_L /L$.
It is described by
\begin{equation*}
{\tilde \Omega}^\varepsilon = \mathbf{r} (\{ (x_{1}, x_{2}, x_{3}) \in \omega \times (-\varepsilon/2 , \varepsilon/2) \})= \mathbf{r} ({\Omega}^\varepsilon),
\end{equation*}
  ${\tilde \Sigma}^{\varepsilon}_+$ (respectively ${\tilde \Sigma}^{\varepsilon}_-$) is the upper face (respectively the lower face) of the shell ${\tilde \Omega}^ \varepsilon$. ${\tilde \Gamma}^ \varepsilon$ is the lateral boundary, ${\tilde \Gamma}^ \varepsilon = \partial \omega \times (-\varepsilon/2 , \varepsilon/2)$.

We suppose that a given dimensionless traction force is applied on ${\tilde \Sigma}^{\varepsilon}_+ \cup {\tilde \Sigma}^{\varepsilon}_-$
and impose  the shell is clamped on ${\tilde \Gamma}^ \varepsilon$:
\begin{gather}
{\tilde \sigma}^\varepsilon {\nub } = (2 \mathbf{e} (\utb^\varepsilon) -\alpha  {\tilde p}^\varepsilon I + {\tilde \lambda} (\mbox{div }\utb^\varepsilon) I) \nub=
  \ep^3 \mathcal{\Pt}_{\pm}   \;  \mbox{ on }\; {\tilde \Sigma}^{\varepsilon}_+ \cup {\tilde \Sigma}^{\varepsilon}_-, \label{FricBC}
\\
\mathbf{ \tilde u}^\varepsilon =0,  \quad \mbox{ on }\: {\tilde \Gamma}^\varepsilon. \label{FricBC1}
\end{gather}

For the pressure ${\tilde p}^\ep$,
at the lateral boundary ${\tilde \Gamma}^\varepsilon $ we
impose zero inflow/outflow flux:
\begin{equation}\label{Bclateral}
-  \bigtriangledown {\tilde p}^\varepsilon \cdot \nub = 0.
\end{equation}
and at ${\tilde \Sigma}^{\varepsilon }_+ \cup {\tilde \Sigma}^{\ep}_-$,  we set
\begin{gather}
-  \bigtriangledown {\tilde p}^\varepsilon  \cdot \nub = \pm \tVred.
\label{Bctop}
\end{gather}
Finally, we need an initial condition for ${\tilde p}^\ep$ at ${ t}=0$,
\begin{equation}\label{Bcbottom}
        {\tilde p}^\ep (x_1 , x_2 , x_3 , 0 ) = {\eps}\, {\tilde p}_{\initial} (x_1 , x_2 ) \quad \mbox{ in } \; {\tilde{\Omega}}^ \varepsilon.
\end{equation}

\def\sN2{\scriptscriptstyle i,j+1}
\def\seJ{{\scriptscriptstyle {i+\frac{1}{2},j}}}


Let $\mathcal{V}(\tilde{\Omega}^\varepsilon) = \{{\mathbf{\tilde{v}}} \in H^1(\tilde{\Omega}^\varepsilon; \mathbb{R}^3): \mathbf{\tilde{v}}|_{\tilde{\Gamma}^\varepsilon} = 0\}$.
Then the weak formulation corresponding to (\ref{Bioteq1})--(\ref{Bcbottom}) is given by

\vskip1pt
Find $\utb^{\ep}  \in H^1(0,T, \mathcal{V}(\tilde{\Omega}^\varepsilon) )$, ${\tilde p}^\ep \in H^1(0,T; H^1 ({\tilde \Omega}^{\ep}))$ such that it holds
\begin{eqnarray}
&&\int_{{\tilde \Omega}^\ep} 2  \ \mathbf{e}(\utb^\ep )  :  \mathbf{e}(\vtb)\ dx + {\tilde \lambda } \int_{{\tilde \Omega}^\ep} \mbox{ div } \utb^\ep  \mbox{ div }\vtb\ dx -\alpha\int_{{\tilde \Omega}^\ep}
{\tilde p}^\ep  \mbox{ div }\vtb \ dx\nonumber\\
 &&=\int_{{\tilde \Sigma}^{\ep}_+} \ep^3 \mathcal{\tilde P}_+ \cdot \vtb \ ds + \int_{{\tilde \Sigma}^{\ep}_-} \ep^3 \mathcal{\tilde P}_- \cdot \vtb \ ds, \quad \mbox{ for every } \; \vtb\in \mathcal{V}(\tilde{\Omega}^\varepsilon) \; \mbox{ and } t\in (0,T),\label{variational1} \\
&&\koefdva \int_{{\tilde \Omega}^\ep} \partial_t {\tilde p}^\ep {\tilde q} \ dx + \int_{{\tilde \Omega}^\ep} \alpha \mbox{ div } {\partial_t \utb^\ep }{\tilde q} \ dx +{  \ep^2 } \int_{{\tilde \Omega}^\ep}
\nabla {\tilde p}^\ep \cdot \nabla {\tilde q} \ dx\nonumber\\
&&=  \ep^2 \int_{{\tilde \Sigma}^{\ep}_-} \tVred  {\tilde q} \ ds -   \ep^2 \int_{{\tilde \Sigma}^{\ep}_+} \tVred  {\tilde q} \ ds, \quad \mbox{ for every } \; {\tilde q} \in H^1({\tilde \Omega}^\ep) \; \mbox{ and } t\in (0,T),\label{variational2}\\
&&{\tilde p}^\ep |_{\{t=0\}} ={\color{red}\eps}\, {\tilde p}_{\initial}, \quad \mbox{ in } {\tilde \Omega}^\ep.\label{variational3}
\end{eqnarray}


 Note that for two $3\times 3$ matrices $A$ and $B$ the Frobenius scalar product is denoted by $A:B= \displaystyle \tr{(A B^T)}$.

\subsection{Existence and uniqueness for the $\ep$-problem}\label{secEU}

In this subsection we recall the existence and uniqueness of a solution $\{ {\bf \tilde u}^\ep, {\tilde p}^\ep \}  \in$  $ H^{1}(0,T; \mathcal{V}(\tilde{\Omega}^\varepsilon) )\times H^{1}(0,T;H^1( {\tilde \Omega}^\ep ))$
  of the problem \eqref{variational1}-\eqref{variational3}. We follow \cite{AMCAM:2012} and get

\begin{proposition}\label{epexist}  Let us suppose
\begin{gather}\label{Hypoth}
     {\tilde p}_{\initial} \in H^2_0 ({\tilde \Omega}^\ep), \;
 \mathcal{P_{\pm}}\in H^{2}(0,T;  L^2 (\o ;  \mathbb{R}^3) )
   \; \mbox{and}   \;  \tVred \in H^{1} (0,T;L^2(\o )),
   \; \tVred |_{\{t=0\}} =0 .
\end{gather}
Then problem (\ref{variational1})--(\ref{variational3}) has a unique solution $\{ {\bf \tilde u}^\ep, {\tilde p}^\ep \}  \in H^1 (0,T;  \mathcal{V}(\tilde{\Omega}^\varepsilon) ) ) \times H^1 (0,T; H^1 ({\tilde \Omega^\eps)}).$
\end{proposition}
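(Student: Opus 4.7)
The strategy is to adapt to the shell geometry the construction used for the poroelastic plate in \cite{AMCAM:2012}. The system (\ref{variational1})--(\ref{variational3}) is a linear quasi-static Biot system: (\ref{variational1}) is a stationary Lam\'e problem with the pressure acting as a bulk source, while (\ref{variational2}) is a parabolic equation for $\tilde{p}^\ep$ coupled through $\partial_t\div\utb^\ep$. The elastic bilinear form is coercive on $\mathcal{V}(\tilde{\Omega}^\ep)$ by Korn's inequality (since $\utb^\ep$ vanishes on all of $\tilde{\Gamma}^\ep$), and the pressure mass form is coercive because $\koefdva>0$.

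First I would set up a Galerkin scheme combined with the energy identity that makes the Biot coupling cancel. Choose smooth bases $\{\wb_k\}$ of $\mathcal{V}(\tilde{\Omega}^\ep)$ and $\{\phi_k\}$ of $H^1(\tilde{\Omega}^\ep)$, and seek finite-dimensional approximants $(\utb^{\ep,N},\tilde{p}^{\ep,N})$ satisfying the Galerkin projection of (\ref{variational1})--(\ref{variational2}), with initial datum $\tilde{p}^{\ep,N}(0)=\ep\,P_N\tilde{p}_{\initial}$ (the $L^2$ orthogonal projection). The discrete problem is a regular linear differential--algebraic system reducing to an ODE with the data regularity of (\ref{Hypoth}), and hence admits a unique solution on $[0,T]$.

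The central step is the uniform energy estimate. Testing (\ref{variational1}) with $\vtb=\partial_t\utb^{\ep,N}$ and (\ref{variational2}) with $\tilde{q}=\tilde{p}^{\ep,N}$, then adding, the Biot coupling terms $-\alpha\int\tilde{p}^\ep\div\partial_t\utb^\ep\,dx$ and $+\alpha\int\div\partial_t\utb^\ep\,\tilde{p}^\ep\,dx$ cancel, yielding
\begin{equation*}
\frac{d}{dt}\Bigl\{\int_{\tilde{\Omega}^\ep}\bigl(|\eb(\utb^\ep)|^2+\tfrac{\tilde{\lambda}}{2}(\div\utb^\ep)^2+\tfrac{\koefdva}{2}|\tilde{p}^\ep|^2\bigr)\,dx\Bigr\}+\ep^2\|\nabla\tilde{p}^\ep\|_{L^2}^2 = F(t),
\end{equation*}
where $F(t)$ gathers the surface terms involving $\mathcal{\tilde P}_\pm$ and $\tVred$. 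Bounding $F$ via (\ref{Hypoth}), Korn's inequality and Gronwall yield uniform bounds on $(\utb^{\ep,N},\tilde{p}^{\ep,N})$ in $L^\infty(0,T;\mathcal{V}(\tilde\Omega^\ep))\times\bigl(L^\infty(0,T;L^2(\tilde\Omega^\ep))\cap L^2(0,T;H^1(\tilde\Omega^\ep))\bigr)$. To upgrade to the $H^1$-in-time regularity required by the statement, I would differentiate (\ref{variational1}) in $t$, test with $\vtb=\partial_t^2\utb^{\ep,N}$, differentiate (\ref{variational2}) in $t$ and test with $\tilde{q}=\partial_t\tilde{p}^{\ep,N}$, then add: the cross term $\alpha\int\div\partial_t^2\utb^\ep\,\partial_t\tilde{p}^\ep\,dx$ again cancels, producing control of $\|\eb(\partial_t\utb^\ep)\|_{L^\infty L^2}$, $\|\partial_t\tilde{p}^\ep\|_{L^\infty L^2}$ and $\|\nabla\partial_t\tilde{p}^\ep\|_{L^2 L^2}$.

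Passing to the limit $N\to\infty$ by weak-$*$ compactness recovers a solution of (\ref{variational1})--(\ref{variational3}) in the claimed regularity class, and uniqueness is immediate from the same energy identity applied to the difference of two solutions (which satisfies the homogeneous problem by linearity). The main obstacle I foresee is identifying the initial values entering the second-order estimate: $\partial_t\tilde{p}^\ep(0)$ must be recovered from (\ref{variational2}) at $t=0$, which requires $\Delta\tilde{p}_{\initial}\in L^2(\tilde\Omega^\ep)$ and $\tVred|_{t=0}=0$, and $\partial_t\utb^\ep(0)$ is then obtained by solving the time-differentiated elasticity equation at $t=0$ with sources $\partial_t\mathcal{\tilde P}_\pm(0)$ and $\alpha\nabla\partial_t\tilde{p}^\ep(0)$. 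The compatibility hypotheses $\tilde{p}_{\initial}\in H^2_0(\tilde\Omega^\ep)$ and $\tVred|_{t=0}=0$ in (\ref{Hypoth}) are exactly what is needed for these reconstructions to be consistent and to provide bounded initial data allowing Gronwall to close at the higher-order level.
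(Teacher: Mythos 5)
The paper proves this proposition only by citing the analogous poroelastic-plate result of \cite{AMCAM:2012}; your Galerkin-plus-energy construction is a correct reconstruction of the argument there and is the standard one, with the first energy identity (tested with $\vtb=\partial_t\utb^\ep$ and $\tilde q=\tilde p^\ep$) cancelling the Biot coupling and the time-differentiated identity yielding the claimed $H^1$-in-time regularity, the hypotheses \eqref{Hypoth} entering exactly as you indicate. One small imprecision in your closing discussion: $\partial_t\tilde p^\ep(0)$ and $\partial_t\utb^\ep(0)$ are not obtained sequentially, since $\div\partial_t\utb^\ep(0)$ already appears in \eqref{variational2} evaluated at $t=0$; one must instead solve the \emph{coupled} stationary Biot-type system consisting of \eqref{variational2} at $t=0$ together with the $t$-differentiated \eqref{variational1} at $t=0$, which is well posed (eliminating $\partial_t\tilde p^\ep(0)$ from the first relation only adds the nonnegative term $\tfrac{\alpha^2}{\koefdva}\|\div\partial_t\utb^\ep(0)\|^2$ to the elastic form) and uses ${\tilde p}_{\initial}\in H^2_0(\tilde\Omega^\ep)$ and $\tVred|_{\{t=0\}}=0$ precisely as you anticipate.
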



\subsection{Problem in Curvilinear Coordinates and the Scaled Problem}\label{S4}

Our goal is to find the limits of the solutions of problem
(\ref{variational1})--(\ref{variational3}) when $\varepsilon$ tends to zero. It is known from similar
considerations made for classical shells that asymptotic
behavior of the longitudinal and transverse displacements of the
elastic body is different. The same effect is expected in the
present setting. Therefore we need to consider asymptotic behavior
of the local components of the displacement $\utb^\varepsilon$. It can be
done in many ways, but in order to preserve some important
properties of bilinear forms, such as positive definiteness and
symmetry, we rewrite the equations in curvilinear coordinates
defined by $\mathbf{r}$. Then we formulate equivalent problems posed on
the domain independent of $\varepsilon$.

The covariant basis of the shell $\overline{\tilde{\Omega}^\varepsilon}$,
which is the three-dimensional manifold parameterized by $\mathbf{r}$, is
defined by
$$
\mathbf{g}^\varepsilon_i = \partial_i \mathbf{r} : \Omega^\varepsilon \rightarrow \mathbb{R}^3, \quad i=1, 2, 3.
$$
Vectors $\left\{ \mathbf{g}^{\varepsilon}_1, \mathbf{g}^{\varepsilon}_2, \mathbf{g}^{\varepsilon}_3\right\}$ are given by
\begin{align*}
   &\mathbf{g}^{\varepsilon}_1 = \mathbf{a}_1 (y) + x_3 \partial_{y_1}  \mathbf{a}_3 (y) , \\
   &\mathbf{g}^{\varepsilon}_2 = \mathbf{a}_2 (y) + x_3 \partial_{y_2}  \mathbf{a}_3 (y) , \\
   &\mathbf{g}^{\varepsilon}_3 = \mathbf{a}_3 (y).
\end{align*}

Vectors $\left\{ \mathbf{g}^{1,\varepsilon}, \mathbf{g}^{2,\varepsilon}, \mathbf{g}^{3,\varepsilon}\right\}$
satisfying
$$
\mathbf{g}^{j,\varepsilon} \cdot \mathbf{g}^\varepsilon_i = \delta_{ij} \mbox{ on } \overline{\Omega}^\varepsilon, \quad i, j =1, 2, 3,
$$
where $\delta_{ij}$ is the Kronecker symbol, form the contravariant
basis on $\overline{\tilde{\Omega}}^\varepsilon$. The contravariant
metric tensor $\mathbf{G}^{c,\varepsilon} = (g^{i j, \varepsilon})$, the covariant
metric tensor $\mathbf{G}^\varepsilon_c = (g^\varepsilon_{i j})$ and the Christoffel
symbols $\Gamma^{i, \varepsilon}_{j k}$ of the shell
$\overline{\tilde{\Omega}}^\varepsilon$ are defined by
$$
g^{i j, \varepsilon} = \mathbf{g}^{i,\varepsilon} \cdot \mathbf{g}^{j, \varepsilon}, \quad g^\varepsilon_{i
j} = \mathbf{g}^\varepsilon_i \cdot \mathbf{g}^\varepsilon_j,
\quad
\Gamma^{i, \varepsilon}_{j k} = \mathbf{g}^{i, \varepsilon} \cdot \partial_j \mathbf{g}^\varepsilon_k \mbox{ on } \overline{\Omega}^\varepsilon,
\quad i, j, k =1, 2, 3.
$$
We set
\begin{equation}\label{gammaind}
    {\bf \Gamma}^{i, \ep} =(\Gamma^{i, \ep}_{jk} )_{j,k=1, \dots ,3} \quad  \mbox{and} \quad {\tilde \gamb}_\ep (\mathbf{v}) = \frac{1}{2}(\nabla \mathbf{v} +\nabla \mathbf{v}^T)-\displaystyle \sum^3_{i=1} v_i\mathbf{\Gamma}^{i,\varepsilon}.
\end{equation}
Let $g^\varepsilon = \det{\mathbf{G}^\varepsilon_c}$.
 Until now we were using the canonical basis $\{ \mathbf{e}_1, \mathbf{e}_2, \mathbf{e}_3 \}$, for $\mathbb{R}^3$. Now  the displacement is rewritten in the
contravariant basis,
\[
\utb^\varepsilon \circ \mathbf{r} { (y_1 , y_2 , x_3 ) = \sum_{i=1}^3 {\tilde u}_i^{\varepsilon} \circ \rbb (y_1 , y_2 , x_3 ) \mathbf{e}_i = \sum_{i=1}^3 u^\varepsilon_i (y_1 , y_2 , x_3 ) \mathbf{g}^{i,\varepsilon}(y_1 , y_2 , x_3)},\quad
\vtb \circ \mathbf{r} = \sum_{i=1}^3 v_i \mathbf{g}^{i,\varepsilon},
\]
while for scalar fields we just change the coordinates 
\[
{\tilde p}^\varepsilon \circ \mathbf{r} = p^\varepsilon,\qquad
{\tilde q } \circ \mathbf{r} = q, \qquad \tVred \circ \mathbf{r} = \Vred,\qquad {\tilde p}_{\initial} \circ \mathbf{r} = p_{\initial},
\]
on $\overline{\Omega}^\varepsilon$.  The contact forces are rewritten in the covariant basis of the shell
$$
\mathcal{\tilde P}_\pm \circ \mathbf{r} =\sum_{i=1}^3 (\mathcal{P}_\pm)_i \mathbf{g}^\varepsilon_i \mbox{ on } \Sigma^\varepsilon_\pm.
$$
New vector functions are defined by
$$
\mathbf{u}^\varepsilon = u^\varepsilon_i \mathbf{e}_i, \quad
\mathbf{v} = v_i \mathbf{e}_i, \quad
\mathcal{P}_\pm = (\mathcal{P}_\pm)_i \mathbf{e}_i.
$$
 Note that $u^\varepsilon_i$ are not components of the physical displacement. They are just intermediate functions which will be used to reconstruct $\utb^\varepsilon$.
The corresponding function space to $\mathcal{V} (\tilde{\Omega}^\varepsilon)$ is
the space
$$
\mathcal{V} (\Omega^\varepsilon) = \{ \mathbf{v} \in H^1(\Omega^\varepsilon)^3 \: : \:
\mathbf{v}|_{\Gamma^\varepsilon} = 0 \}.
$$
Let $ \mathbf{Q}^\varepsilon = (\nabla
 \mathbf{r})^{-T}$ and let
\begin{equation}\label{symbcoef}
  \mathcal{C} \Ebb ={\tilde \lambda} (\tr{\Ebb}) \Ibb +2\Ebb , \quad \mbox{for all} \quad \Ebb\in  M^{3\times 3}_{\sym}.
\end{equation}
 Then the problem  (\ref{variational1})--(\ref{variational3}) can be written by
\begin{equation}\label{3dw1}
\aligned
&  \int_{\Omega^\varepsilon} \mathcal{C} \left(\mathbf{Q}^\varepsilon  {\tilde \gamb}_\ep  (\mathbf{u}^\varepsilon)(\mathbf{Q}^\varepsilon)^T \right)  :  \left(\mathbf{Q}^\varepsilon  {\tilde \gamb}_\ep (\mathbf{v})(\mathbf{Q}^\varepsilon)^T\right) \sqrt{g^\varepsilon} dy  -  \alpha \int_{\Omega^\varepsilon} p^\varepsilon \mbox{tr}{\left(\mathbf{Q}^\varepsilon  {\tilde \gamb}_\ep  (\mathbf{v})(\mathbf{Q}^\varepsilon)^T\right)} \sqrt{g^\varepsilon} dy \\
&\qquad =  \ep^3 \int_{\Sigma^\varepsilon_{+}}  \mathcal{P}_+ \cdot \mathbf{v} \sqrt{g^\varepsilon}ds + \ep^3 \int_{\Sigma^\varepsilon_{-}} \mathcal{P}_- \cdot \mathbf{v} \sqrt{g^\varepsilon} ds,
\qquad \vb \in \mathcal{V}(\Omega^\varepsilon), \mbox{ a.e. } t\in [0,T],\\
&  \int_{\Omega^\varepsilon} \koefdva \frac{\partial {p}^\varepsilon}{\partial t} q \sqrt{g^\varepsilon} dy +   \int_{\Omega^\varepsilon} \alpha \frac{\partial }{\partial t} \mbox{tr}{\left(\mathbf{Q}^\varepsilon {\tilde \gamb}_\ep  (\mathbf{u}^\varepsilon)(\mathbf{Q}^\varepsilon)^T \right)}  q \sqrt{g^\varepsilon} dy \\
 & \qquad +\varepsilon^2 \int_{\Omega^\varepsilon}  \mathbf{Q}^\varepsilon \nabla  p^\varepsilon  \cdot  \mathbf{Q}^\varepsilon \nabla  q  \sqrt{g^\varepsilon} dy
 = \ep^2  \int_{\Sigma^\varepsilon_{-}} \Vred q \sqrt{g^\varepsilon} ds -  \ep^2  \int_{\Sigma^\varepsilon_{+}} \Vred q \sqrt{g^\varepsilon} ds,\\
& \qquad q \in H^1(\Omega^\varepsilon), \mbox{ a.e. } t\in [0,T],\\
& p^\varepsilon = \eps\, p_{\initial}, \qquad \mbox{ for } t=0.
\endaligned
\end{equation}
 This is the problem in curvilinear coordinates.

Problems for all $\utb^\varepsilon, {\tilde p}^\varepsilon$ and $\mathbf{u}^\varepsilon,
p^\varepsilon$ are posed on $\varepsilon$--dependent domains.
In the sequel we follow the idea from Ciarlet, Destuynder
\cite{Ciarlet-Dest} and rewrite (\ref{3dw1}) on the canonical
domain independent of $\varepsilon$. As a consequence, the coefficients
of the resulting weak formulation will depend on $\varepsilon$
explicitly.

Let $\Omega = \omega \times (-1/2, 1/2)$ and let $\mathbf{R}^\varepsilon :
\overline{\Omega} \rightarrow \overline{\Omega}^\varepsilon$ be defined
by
$$
\mathbf{R}^\varepsilon (z)= (z^1, z^2, \varepsilon z^3), \quad
z \in \Omega, \ \varepsilon \in (0, \varepsilon_0).
$$
By $\Sigma_\pm = \omega \times \{\pm 1/2\}$ we denote the upper and
lower face of $\Omega$. Let $\Gamma = \partial \omega \times (-1/2, 1/2)$.
To the functions $\mathbf{u}^\varepsilon$, $p^\varepsilon$,  $g^\varepsilon$, $\mathbf{g}^\varepsilon_i$, $\mathbf{g}^{\varepsilon,i}$, $\mathbf{Q}^\varepsilon$,
$\Gamma^{i,\varepsilon}_{j k}$, $i, j, k = 1, 2, 3$ defined on
$\overline{\Omega}^\varepsilon$ we associate the functions $\mathbf{u} (\varepsilon)$,
$p(\varepsilon)$,  $g(\varepsilon)$, $\mathbf{g}_i
(\varepsilon)$, $\mathbf{g}^i (\varepsilon)$, $\mathbf{Q}(\varepsilon)$, $\Gamma^i_{i j} (\varepsilon)$,
$i, j, k = 1, 2, 3$ defined on $\overline{\Omega}$ by composition
with $\mathbf{R}^\varepsilon$. Let us also define
\begin{eqnarray*}
&&\mathcal{V}(\Omega) = \{ \mathbf{v}=(v_1, v_2, v_3) \in H^1(\Omega;\mathbb{R}^3) \: : \:
\mathbf{v}|_{\Gamma}= 0 \}.
\end{eqnarray*}
Then the problem (\ref{3dw1}) can be written  as
\begin{equation}\label{3dw2}
\aligned
&  \varepsilon\int_{\Omega} \mathcal{C} \left(\mathbf{Q}(\varepsilon)\gamb^\varepsilon (\mathbf{u}(\varepsilon))\mathbf{Q}(\varepsilon)^T \right)  :   \left(\mathbf{Q}(\varepsilon) \gamb^\varepsilon (\mathbf{v}) \mathbf{Q}(\varepsilon)^T\right) \sqrt{g(\varepsilon)} dz\\
& \qquad -  \varepsilon \alpha \int_{\Omega} p(\varepsilon) \mbox{tr}{\left(\mathbf{Q}(\varepsilon)\gamb^\varepsilon (\mathbf{v}) \mathbf{Q}(\varepsilon)^T\right)} \sqrt{g(\varepsilon)} dz\\
&= \ep^3  \int_{\Sigma_{+}}  \mathcal{P}_+ \cdot \mathbf{v} \sqrt{g(\varepsilon)}ds + \ep^3 \int_{\Sigma_{-}} \mathcal{P}_- \cdot \mathbf{v} \sqrt{g(\varepsilon)} ds, \qquad \mathbf{v} \in \mathcal{V}(\Omega), \mbox{ a.e. } t\in [0,T],\\
&  \varepsilon\int_{\Omega} \koefdva \frac{\partial p(\varepsilon)}{\partial t} q \sqrt{g(\varepsilon)} dz +   \varepsilon \int_{\Omega} \alpha \frac{\partial }{\partial t} \mbox{tr}{\left(\mathbf{Q}(\varepsilon) \gamb^\eps(\mathbf{u}(\varepsilon)) \mathbf{Q}(\varepsilon)^T \right)}  q \sqrt{g(\varepsilon)} dz \\
 &\qquad +\varepsilon^3 \int_{\Omega}  \mathbf{Q}(\varepsilon) \nabla^\varepsilon  p(\varepsilon)  \cdot \mathbf{Q}(\varepsilon) \nabla^\varepsilon  q   \sqrt{g(\varepsilon)} dz\\
 &
   =  \ep^2 \int_{\Sigma_{-}}  \Vred  q \sqrt{g(\varepsilon)} ds - \ep^2 \int_{\Sigma_{+}}  \Vred  q \sqrt{g(\varepsilon)} ds,
   \qquad q \in H^1(\Omega), \mbox{ a.e. } t\in [0,T],\\
& p(\varepsilon) = \eps\,p_{\initial} , \qquad \mbox{ for } t=0.
\endaligned
\end{equation}
Here
\begin{equation}\label{gammep}
  \gamb^\varepsilon (\mathbf{v})= \frac{1}{\varepsilon}\gamb_z(\mathbf{v}) + \gamb_y(\mathbf{v}) -  \sum_{i=1}^3 v_i\mathbf{\Gamma}^{i}(\varepsilon),
\end{equation}
\begin{equation*}
\aligned
\gamb_z (\mathbf{v}) &= {\left[
\begin{array}{ccc}
0& 0 &\frac{1}{2}\partial_3 v_1 \\
0& 0 &\frac{1}{2}\partial_3 v_2 \\
\frac{1}{2}\partial_3 v_1& \frac{1}{2}\partial_3 v_2 &\partial_3 v_3
\end{array}
\right]},\;
\gamb_y (\mathbf{v}) = {\left[
\begin{array}{ccc}
\partial_1 v_1 &  \frac{1}{2}(\partial_2 v_1 +\partial_1 v_2)& \frac{1}{2}\partial_1 v_3\\
\frac{1}{2}(\partial_2 v_1 +\partial_1 v_2) &  \partial_2 v_2 & \frac{1}{2}\partial_2 v_3\\
\frac{1}{2}\partial_1 v_3 &  \frac{1}{2} \partial_2 v_3 & 0
\end{array}
\right]},\\
\nabla^\varepsilon q &= \frac{1}{\varepsilon} \nabla_z q + \nabla_y q, \qquad \nabla_z q = {\left[\begin{array}{ccc} 0 & 0 & \partial_3 q \end{array}\right]}, \qquad \nabla_y q = {\left[\begin{array}{ccc} \partial_1 q & \partial_2 q & 0\end{array}\right]}.
\endaligned
\end{equation*}

We assume for simplicity that  $p_{\initial}=0$
and proceed with  asymptotic analysis.
 We start by  rescaling  the pressure
$$
\pi(\varepsilon)= \frac{p(\varepsilon)}{\varepsilon}, \qquad q = \frac{q}{\varepsilon}.
$$
The equations are now
\begin{equation}\label{3dw4}
\aligned
&  \varepsilon\int_{\Omega} \mathcal{C} \left(\mathbf{Q}(\varepsilon)\gamb^\varepsilon (\mathbf{u}(\varepsilon))\mathbf{Q}(\varepsilon)^T \right)  :   \left(\mathbf{Q}(\varepsilon) \gamb^\varepsilon (\mathbf{v}) \mathbf{Q}(\varepsilon)^T\right) \sqrt{g(\varepsilon)} dz\\
& \qquad -  \varepsilon^2 \alpha \int_{\Omega} \pi(\varepsilon) \mbox{tr}{\left(\mathbf{Q}(\varepsilon)\gamb^\varepsilon (\mathbf{v}) \mathbf{Q}(\varepsilon)^T\right)} \sqrt{g(\varepsilon)} dz \\
&= \varepsilon^3 \int_{\Sigma_{\pm}}  \mathcal{P}_\pm  \cdot \mathbf{v} \sqrt{g(\varepsilon)}ds,\qquad \mathbf{v} \in \mathcal{V}(\Omega), \mbox{ a.e. } t\in [0,T],\\
&  \varepsilon^3\int_{\Omega} \koefdva \frac{\partial \pi (\varepsilon)}{\partial t} q \sqrt{g(\varepsilon)} dz + \varepsilon^2  \int_{\Omega} \alpha \frac{\partial }{\partial t} \mbox{tr}{\left(\mathbf{Q}(\varepsilon) \gamb^\eps (\mathbf{u}(\varepsilon)) \mathbf{Q}(\varepsilon)^T \right)}  q \sqrt{g(\varepsilon)} dz\\
&\qquad +\varepsilon^5 \int_{\Omega}  \mathbf{Q}(\varepsilon) \nabla^\varepsilon  \pi(\varepsilon)  \cdot \mathbf{Q}(\varepsilon) \nabla^\varepsilon  q   \sqrt{g(\varepsilon)} dz = \mp\varepsilon^3 \int_{\Sigma_{\pm}}  \Vred  q \sqrt{g(\varepsilon)} ds,\\
&\qquad q \in H^1(\Omega), \; \mbox{a.e.} \; t\in [0,T], \\
& p(\varepsilon) = 0, \qquad \mbox{ for } t=0.
\endaligned
\end{equation}
Here and in the sequel we use the notation
$$
\aligned
&\mp \int_{\Sigma_{\pm}}  \Vred  q \sqrt{g(\varepsilon)} ds =  \int_{\Sigma_{-}}  \Vred  q \sqrt{g(\varepsilon)} ds -  \int_{\Sigma_{+}}  \Vred  q \sqrt{g(\varepsilon)} ds,\\
&\int_{\Sigma_{\pm}}  \mathcal{P}_\pm  \cdot \mathbf{v} \sqrt{g(\varepsilon)}ds= \int_{\Sigma_{+}}  \mathcal{P}_+  \cdot \mathbf{v} \sqrt{g(\varepsilon)}ds  + \int_{\Sigma_{-}} \mathcal{P}_-  \cdot \mathbf{v} \sqrt{g(\varepsilon)} ds.
\endaligned
$$
\begin{remark} \em
Existence and uniqueness of a smooth solution to problem (\ref{3dw4}) follows from Proposition \ref{epexist} and the smoothness of the curvilinear coordinates transformation.
\end{remark}

\subsection{Convergence results}\label{subConv}

In the remainder of the paper we make the following assumptions
\begin{assumptions}\label{Hypoth1}
  For simplicity, we assume that $p_{in} =0$, that $\Vred\in H^{1} (0,T;  L^2( \o))$,
   $V|_{\{ t=0\} } =0$ and
that  $\mathcal{P_{\pm}} \in H^{2}(0,T;   L^2( \o ; \mathbb{R}^3))$, with $ \mathcal{P_{\pm}} |_{\{ t=0\} } =0$.
\end{assumptions}
\noindent We recall that the differential operators $\gamb$ and $\rhob$
 are given by (\ref{symbgam}) and (\ref{symbrho}), respectively. Let
\begin{equation}\label{v}
\cV_F(\omega) = \{\vb\in H^1(\omega) \times H^1(\omega) \times H^2(\omega): \gamb(\vb)=0, \vb|_{\partial \omega} = 0,  \parder{v_3}{\nub}|_{\partial \omega} = 0\}.
\end{equation}
 We will suppose the classical hypothesis that leads to the "flexural shell" models:
\begin{equation}\label{flexhyp}
  \cV_F(\omega) \neq \{ 0 \}.
\end{equation}
 Let us formulate the boundary value problem in $\Omega=\o \times (-1/2 , 1/2)$ for the effective displacement and the effective pressure:
\vskip2pt
Find $\{\mathbf{u}, \pi^0  \} \in C([0,T];\cV_F(\omega) \times L^2(\O))$,  $\partial_{z_3} \pi^0 \in L^2 ((0,T)\times \Omega)$ satisfying the system
\begin{equation}\label{main_druga}
\aligned
&  \frac{\hnatri}{12} \int_\omega \tcC (\Abb^c\rhob(\ub))  :   \rhob(\vb)\Abb^c \sqrt{a} dz_1 dz_2 + \frac{2 \mut  \alpha}{\lambdat +2\mut}\int_\omega  \int_{-\hnaj/2}^{\hnaj/2} z_3 \pi^0 dz_3  \Abb^c  :   \rhob(\vb) \sqrt{a}dz_1 dz_2\\
 &\quad =   \int_{\o  } ( {\mathcal P}_{+} + {\mathcal P}_{-})  \cdot \vb \sqrt{a} dz_1 dz_2, \qquad \vb \in \cV_F(\omega).
\endaligned
\end{equation}
\begin{equation}\label{main_treca}
\aligned
&   \frac{d}{d t}\int_{\Omega} \left(\koefdva + \frac{\alpha^2 \mut }{\lambdat+2\mut}\right) \pi^0 q \sqrt{a} dz -  \int_{\Omega}  \frac{2\mut \alpha}{\lambdat+2\mut} \Abb^c  :   \rhob(\parder{\ub}{t}) z_3 q \sqrt{a} dz +  \int_{\Omega} \parder{\pi^0}{z_3} \parder{q}{z_3}   \sqrt{a} dz\\
& \qquad =  \mp \int_{\Sigma_{\pm}} \Vred  q \sqrt{a} \ dz_1 d z_2 \quad \mbox{in} \quad \mathcal{D}' (0,T), \qquad q \in H^1( -1/2 , 1/2; L^2 (\omega)),
\endaligned
\end{equation}
\begin{equation}\label{maincetvrta}
  \pi^0 = 0 \qquad \mbox{at} \quad t=0,
\end{equation}
where $\gamb (\cdot)$ and $\rhob (\cdot )$ are given by (\ref{symbgam}) and (\ref{symbrho}), respectively,
 and \begin{gather} \tcC \Ebb = 2\mut \frac{\lambdat}{\lambdat+2\mut} \tr{(\Ebb)} \Ibb + 2\mut \Ebb, \qquad \Ebb \in  M^{2\times 2}_{\sym} .\label{effcoef}\end{gather}

 \begin{proposition} Under  Assumption \ref{Hypoth1}, problem (\ref{main_druga})--(\ref{maincetvrta}) has a  unique solution $\{\mathbf{u}, \pi^0 \} \in C([0,T];\cV_F(\omega) \times L^2(\O))$, $\partial_{z_3} \pi^0 \in L^2 ((0,T)\times \Omega)$
     Furthermore, $\partial_t \pi^0_{3}\in L^2 ((0,T)\times \Omega)$ and $\partial_t \mathbf{u} \in L^2 (0,T; \cV_F(\omega) ))$.
\end{proposition}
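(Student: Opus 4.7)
The plan is to construct a solution by a Galerkin scheme in the separable spaces $\cV_F(\omega)$ and $\{q\in L^2(\Omega):\partial_{z_3}q\in L^2(\Omega)\}$, to derive a priori bounds from an energy identity in which the displacement--pressure coupling cancels exactly, to lift regularity by differentiating the system in $t$, and to conclude existence by compactness and uniqueness by linearity. Coercivity of the elastic bilinear form in (\ref{main_druga}) follows from the surface Korn inequality for flexural shells in the Ciarlet--Lods theory referenced in Section~1, which, together with the flexural hypothesis (\ref{flexhyp}), makes $\vb\mapsto\|\rhob(\vb)\|_{L^2(\omega)}$ an equivalent norm on $\cV_F(\omega)$; combined with (\ref{mc}) and the positive definiteness of $\tcC$ on symmetric $2\times 2$ matrices this supplies the elastic coercivity. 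Solvability of the finite-dimensional Galerkin system is a standard linear ODE argument.

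\textbf{Energy identity.} Testing (\ref{main_druga}) with $\vb=\partial_t\ub$ and (\ref{main_treca}) with $q=\pi^0$, and noting by Fubini (since $\Abb^c$ and $\rhob(\partial_t\ub)$ do not depend on $z_3$) that
\begin{equation*}
\int_\omega\Bigl(\int_{-1/2}^{1/2}z_3\pi^0\,dz_3\Bigr)\Abb^c:\rhob(\partial_t\ub)\sqrt{a}\,dz_1dz_2=\int_\Omega\Abb^c:\rhob(\partial_t\ub)\,z_3\pi^0\sqrt{a}\,dz,
\end{equation*}
the two mixed terms cancel upon subtraction and one arrives at
\begin{equation*}
\frac{d}{dt}\Bigl[\tfrac{1}{24}\!\int_\omega\!\tcC(\Abb^c\rhob(\ub))\!:\!\rhob(\ub)\Abb^c\sqrt{a}+\tfrac{1}{2}\!\int_\Omega\!\bigl(\koefdva+\tfrac{\alpha^2\mut}{\lambdat+2\mut}\bigr)(\pi^0)^2\sqrt{a}\Bigr]+\!\int_\Omega\!|\partial_{z_3}\pi^0|^2\sqrt{a}=\!\int_\omega(\mathcal P_++\mathcal P_-)\!\cdot\!\partial_t\ub\sqrt{a}\mp\!\int_{\Sigma_\pm}\!V\pi^0\sqrt{a}.
\end{equation*}
Integrating in $t$, converting $\int_0^t\!\int_\omega\mathcal P\cdot\partial_s\ub$ by integration by parts in time (using $\mathcal P_\pm|_{t=0}=0$) into $\int_\omega\mathcal P(t)\cdot\ub(t)-\int_0^t\!\int_\omega\partial_s\mathcal P\cdot\ub$, absorbing the boundary-in-time term by Young's inequality together with the elastic coercivity, handling the flux term by a trace inequality in the $z_3$ variable, and applying Gronwall, one obtains the basic a priori bounds $\ub\in L^\infty(0,T;\cV_F(\omega))$, $\pi^0\in L^\infty(0,T;L^2(\Omega))$, and $\partial_{z_3}\pi^0\in L^2((0,T)\times\Omega)$.

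\textbf{Higher regularity, passage to the limit, uniqueness.} For the claims $\partial_t\ub\in L^2(0,T;\cV_F(\omega))$ and $\partial_t\pi^0\in L^2((0,T)\times\Omega)$ one differentiates (\ref{main_druga})--(\ref{main_treca}) formally in $t$; by Assumption~\ref{Hypoth1} the data $\partial_t\mathcal P_\pm$, $\partial_t V$ lie in $L^2$ in time and vanish at $t=0$, and evaluating (\ref{main_druga})--(\ref{main_treca}) at $t=0$ with $\pi^0(0)=0$ gives $\ub(0)=0$ so that $(\partial_t\ub(0),\partial_t\pi^0(0))$ can be taken as zero; reapplying the preceding energy identity to $(\partial_t\ub,\partial_t\pi^0)$ yields the stronger bounds. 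Passage to the limit in the Galerkin scheme is routine: the uniform bounds give weak-$\ast$ and weak limits, and linearity combined with density of the bases identifies the limits as a solution; continuity in time follows from the $L^2$-in-time control of the derivatives via $H^1(0,T;X)\hookrightarrow C([0,T];X)$. Uniqueness is an immediate consequence of applying the same energy identity to the difference of two solutions. The main obstacle is the degeneracy of the pressure equation---it is parabolic only in $z_3$---so that tangential regularity of $\pi^0$ is acquired solely through its coupling with the flexural strain $\rhob(\ub)$; this is exactly why the particular coefficient $2\mut\alpha/(\lambdat+2\mut)$ must appear with matching signs in both (\ref{main_druga}) and (\ref{main_treca}), ensuring that the mixed terms cancel and the coupled problem becomes well-posed as a whole although neither subproblem is coercive on its own.
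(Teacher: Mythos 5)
Your proposal is correct in outline and rests on the same mechanism as the paper's proof: testing (\ref{main_druga}) with $\partial_t\ub$ and (\ref{main_treca}) with $\pi^0$, noting that the mixed coupling terms (both carrying the factor $2\mut\alpha/(\lambdat+2\mut)$) cancel exactly, then combining the resulting energy identity with the two-dimensional Korn-type inequality on $\cV_F(\omega)$ cited from \cite[Theorem 4.3-4]{CiarletDG}, and finishing by weak compactness for existence and by linearity for uniqueness.

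Where you genuinely diverge is in establishing $\partial_t\ub\in L^2(0,T;\cV_F(\omega))$ and $\partial_t\pi^0\in L^2((0,T)\times\Omega)$. You differentiate the whole coupled system in $t$ and re-run the energy estimate for $(\partial_t\ub,\partial_t\pi^0)$, which, implemented at the Galerkin level, is the generic route. The paper instead exploits the structure of the coupling: choosing the moment test function $q=z_3\overline q(z_1,z_2)$ in (\ref{main_treca}) shows that
\[
F:=\Bigl(\koefdva+\tfrac{\alpha^2}{\lambdat+2}\Bigr)\int_{-1/2}^{1/2}z_3\pi^0\,dz_3-\tfrac{1}{6}\,\tfrac{\alpha}{\lambdat+2}\,\Abb^c:\rhob(\ub)
\]
lies in $H^1(0,T;L^2(\omega))$ (since $\partial_tF$ equals minus the jump of $\pi^0$ across the faces minus $\Vred$). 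Eliminating $\int z_3\pi^0\,dz_3$ from (\ref{main_druga}) via this identity produces a \emph{decoupled} elliptic problem in $\ub$ with a modified positive tensor; differentiating that elliptic problem in $t$ gives $\partial_t\ub$, and then $z_3$-parabolic regularity of (\ref{main_treca}) gives $\partial_t\pi^0$. The paper's trick avoids ever differentiating the parabolic equation, hence sidesteps the need to control initial values of the time derivatives; your route must deal with them.

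And that is where there is a concrete error. You claim that $\partial_t\mathcal P_\pm$ and $\partial_t\Vred$ vanish at $t=0$ and hence that $(\partial_t\ub(0),\partial_t\pi^0(0))$ can be taken as zero. Assumption \ref{Hypoth1} only gives $\mathcal P_\pm|_{t=0}=0$ and $\Vred|_{t=0}=0$; since $\mathcal P_\pm\in H^2(0,T;L^2)$ the value $\partial_t\mathcal P_\pm(0)$ is well defined but in general nonzero, while $\partial_t\Vred$ is merely $L^2$ in time and has no pointwise trace at $t=0$. Differentiating (\ref{main_druga}) at $t=0$ with source $\partial_t\mathcal P_\pm(0)$ yields, in general, $\partial_t\ub(0)\neq 0$, and then (\ref{main_treca}) at $t=0$ (using $\pi^0(0)=0$, $\Vred(0)=0$) forces $\partial_t\pi^0(0)$ to be proportional to $\Abb^c:\rhob(\partial_t\ub(0))\,z_3$, also nonzero. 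Your energy estimate for the differentiated system still closes, because these initial values are \emph{determined} by the equations at $t=0$ and bounded by $\|\partial_t\mathcal P_\pm(0)\|_{L^2(\omega;\ZR^3)}$, but you must carry them through the estimate rather than set them to zero.
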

\begin{proof} First we prove that $\{   \mathbf{u}, \pi^0 \} \in C([0,T];\cV_F(\omega) \times L^2(\O))$ and $\partial_{z_3} \pi^0 \in L^2 ((0,T)\times \Omega)$ imply a higher regularity in time:

Let us take $q= z_3 {\overline q} (z_1 , z_2)$, $ {\overline q} \in C^\infty ({\overline \omega})$ as a test function in (\ref{main_treca}). It yields 
\begin{equation}\label{comporeg}
  F= \left(\koefdva + \frac{\alpha^2  }{{\tilde \lambda}+2}\right) \int^{1/2}_{-1/2} z_3 \pi^0 \ dz_3 - \frac{1}{6}    \frac{\alpha }{ {\tilde \lambda}+2} \Abb^c  :  \rhob(\ub) \in H^1 (0,T; L^2 (\omega)).
\end{equation}
After inserting (\ref{comporeg}) into (\ref{main_druga}), it takes the form
\begin{equation}\label{main_drugaadd}
\aligned
&  \frac{\hnatri}{12} \int_\omega \tcC (\Abb^c\rhob(\ub))  :  \rhob(\vb)\Abb^c \sqrt{a} dz_1 dz_2 + \frac{ c_1^\beta}{3} \int_\omega  \Abb^c  : \rhob(\ub)   \Abb^c  :  \rhob(\vb) \sqrt{a}dz_1 dz_2\\
 &\quad =   \int_{\omega}  (\cP_++\cP_-)  \cdot \vb \sqrt{a} dz_1 dz_2 - c^\beta_2 \int_\omega F(t) \Abb^c  :  \rhob(\vb) \sqrt{a}dz_1 dz_2 , \qquad \vb \in \cV_F(\omega),
\endaligned
\end{equation}
with $ c^1_\beta =   \alpha^2/(  {\tilde \lambda}+2 )/ \big( \beta ( {\tilde \lambda}+2 ) +  \alpha^2 \big)$ and $c^2_\beta = 2   \alpha/(\koefdva (  {\tilde \lambda}+2 )+\alpha^2)$.
Taking the time derivative and using the time regularity of $F$ and ${\mathcal P}_{\pm} $, yields $\partial_t \ub \in L^2 (0,T;  \cV_F(\omega))$.
  For such $\ubb$ classical regularity theory for the second order linear parabolic equations applied at (\ref{main_drugaadd}) implies $\partial_t \pi^0 \in L^2 ((0,T)\times \Omega)$.

The existence and the uniqueness are based on the energy estimate. If we choose $\vb=\displaystyle \parder{{\mathbf{u}}}{t}$ as a test function for equation (\ref{main_druga}) and $\pi^0 $ as a test function in (\ref{main_treca}) and sum up the equations to obtain the equality
\begin{equation}\label{unique}
\aligned
&  \frac{1}{2}\frac{d}{dt} \bigg\{ \frac{\hnatri}{12}  \int_\omega \tcC (\Abb^c\rhob(\ub))  :  \rhob(\ub)\Abb^c \sqrt{a} dz_1 dz_2  +  \int_{\Omega} \left(\koefdva + \frac{\alpha^2 }{ {\tilde \lambda}+2}  \right)(\pi^0)^2 \sqrt{a} dz\\
&\qquad
  - 2  \int_{\o}  ({\mathcal P}_{+} + {\mathcal P}_{-})   \cdot \ub \sqrt{a} dz_1 dz_2  \bigg\}  +  \int_{\Omega} \left(\parder{\pi^0}{z_3}\right)^2  \sqrt{a} dz dt\\
  & \quad =  - \int_{\o} \partial_t     ({\mathcal P}_{+} + {\mathcal P}_{-})  \cdot \ub \sqrt{a} dz_1 dz_2   \mp \int_{\Sigma_{\pm}} \Vred  \pi^0 \sqrt{a} \ dz_1 d z_2.
\endaligned
\end{equation}
Equality (\ref{unique}) implies uniqueness of solutions to problem (\ref{main_druga})--(\ref{maincetvrta}). Concerning existence, equality (\ref{unique}) allows to obtain the uniform bounds for $\rhob (\ub )$ in $L^\infty (0,T; \cV_F(\omega) )$, for $\pi^0$ in $L^\infty (0,T; L^2 (\Omega) )$ and for $\partial_{z_3} \pi^0 $ in
     $L^2 (0,T; L^2 (\Omega) )$. Using \cite[Teorem 4.3-4.]{CiarletDG}
     and the classical weak compactness reasoning, we conclude the existence of at least one solution. \qed
\end{proof}

\begin{remark} \label{sepvariab} Let ${\overline \beta} = \beta + \displaystyle  \frac{\alpha^2}{ {\tilde \lambda} + 2} $. Using separation of variables, we obtain the formulas
\begin{gather} \pi^0  (t, z_1 , z_2 , z_3 ) =  -\Vred z_3 - \frac{4}{{\overline \beta} \pi^2}
\sum_{j=1}^{+\infty} \frac{(-1)^j}{(2j-1)^2 } \Big( \int^t_0 \exp \{ -\frac{\pi^2 (2j-1)^2 (t-\tau )}{ {\overline \beta}  } \} \frac{\partial }{\partial \tau} \big(\overline{\beta} \Vred ( \tau , z_1 , z_2 )  \notag \\  +\frac{2\alpha }{  {\tilde \lambda} + 2}
\Abb^c : \rhob (   \ub  ( \tau , z_1 , z_2 )  )\big) \ d\tau \Big) \sin ( (2j-1)\pi z_3 ),
\label{Sepvar}\\
  \int^{1/2}_{-1/2} z_3 \pi^0 \ d z_3 =-\frac{\Vred}{12}+
 \frac{8}{\pi^4 {\overline \beta}}\sum_{j=1}^{+\infty} \frac{1}{(2j-1)^4}  \int^t_0 \exp \{ -\frac{\pi^2 (2j-1)^2 (t-\tau )}{  {\overline \beta}  } \} \frac{\partial }{\partial \tau} \big(\overline{\beta} \Vred ( \tau , z_1 , z_2 ) \notag \\
 +  \frac{2\alpha  }{  {\tilde \lambda} + 2}
\Abb^c : \rhob (   \ub  ( \tau , z_1 , z_2 )  )\big) \ d\tau .
\label{Sepvar1}
\end{gather}
After plugging formula (\ref{Sepvar1}) into equation (\ref{main_druga}), we observe memory effects in the flexion equation.
\end{remark}

The main result of the paper is the following theorem.
\begin{theorem}\label{tmain}
Let us suppose Assumption \ref{Hypoth1}.
Let $\{ \ubb(\eps),\pi(\eps) \}\in H^1 (0,T;  \mathcal{V}(\Omega)) \times H^1 (0,T; H^1 (\Omega))$ be the unique solution of (\ref{3dw4})  and let $\{ \ub , \pi^0 \}$ be the unique solution for (\ref{main_druga})--(\ref{maincetvrta}). Then we obtain
$$
\aligned
&\ub(\eps) \to \ub \qquad \mbox{ strongly in } C([0,T]; H^1(\Omega;\ZR^3)),\\
&\frac{1}{\eps}\gamb^\eps (\ub(\eps)) \to \gamb^0 \qquad \mbox{ strongly in } C([0,T];L^2(\Omega;\ZR^{3\times 3})),\\
&\pi (\eps) \to \pi^0 \qquad \mbox{ strongly in } C([0,T];L^2(\Omega)),\\
&\parder{\pi(\eps)}{z_3} \to \parder{\pi^0}{z_3} \qquad \mbox{ strongly in } L^2(0,T;L^2(\Omega)),
\endaligned
$$
where 
\begin{gather}
\gamb^0 = \left[\begin{array}{cc}
-z_3 \rhob(\ub) &\begin{array}{c}
0\\0\end{array}\\
\begin{array}{cc}
0 & 0
\end{array}
&
\displaystyle \frac{\alpha }{{\tilde \lambda} + 2} \pi^0 + z_3 \frac{  {\tilde \lambda} }{ {\tilde \lambda} + 2 } \Abb^c  :  \rhob(\ub)
\end{array}\right].
\label{gamma0}\end{gather}
\end{theorem}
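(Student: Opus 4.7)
The proof follows the Ciarlet--Lods--Miara blueprint for flexural shells, adapted to accommodate the displacement--pressure coupling as in \cite{AMCAM:2012}. The strategy has four stages: derive uniform a priori estimates, extract weak limits and identify the algebraic structure of $\gamb^0$, pass to the limit in (\ref{3dw4}) to obtain (\ref{main_druga})--(\ref{maincetvrta}), and upgrade to strong convergence.

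First, I would establish uniform bounds by testing (\ref{3dw4}) with $\vb=\partial_t \ubb(\eps)$ in the displacement equation and with $q=\pi(\eps)$ in the pressure equation, then adding them so that the cross-coupling terms proportional to $\alpha$ cancel exactly (this is the same cancellation used in the existence proof in \cite{AMCAM:2012} and in the energy identity (\ref{unique})). Dividing by the appropriate power of $\eps$ and invoking positive definiteness (\ref{mc}) of the metric tensor, coercivity of $\mathcal{C}$ on symmetric matrices, together with a Korn-type inequality in curvilinear coordinates on the clamped portion $\Gamma$, yields uniform bounds for $\frac{1}{\eps}\gamb^\eps(\ubb(\eps))$ in $L^\infty(0,T; L^2(\Omega;\ZR^{3\times 3}))$, for $\pi(\eps)$ in $L^\infty(0,T; L^2(\Omega))$, and for $\partial_{z_3}\pi(\eps)$ in $L^2((0,T)\times \Omega)$. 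From the definition (\ref{gammep}) of $\gamb^\eps$, these give $\ubb(\eps)$ bounded in $H^1(0,T;\mathcal V(\Omega))$ and allow extraction of weak limits $\ubb$, $\pi^0$, $\gamb^0$ along a subsequence.

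Second, I would identify the structure of the limit. The bound $\gamb^\eps(\ubb(\eps))=O(\eps)$ combined with (\ref{gammep}) forces $\gamb_z(\ubb(\eps))=O(\eps)$ in $L^2$, so at the limit $\partial_3 u_i = 0$, and thus $\ubb$ is independent of $z_3$. Taking $z_3$-independent test functions $\vb$ in the first line of (\ref{3dw4}), dividing by $\eps$ and using Lebesgue's theorem on $\sqrt{g(\eps)}\to\sqrt{a}$, $\Gambb^i(\eps)\to \Gambb^i$, the leading order yields $\gamma_{\alpha\beta}(\ubb)=0$, i.e.\ $\ubb\in\cV_F(\omega)$. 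For the $2\times 2$ block of $\gamb^0$, I would write the formal expansion $\ubb(\eps)=\ubb+\eps \ubb^1+\cdots$ in the Taylor form inside $\gamb^\eps$; the coefficient of $\eps^0$ is $-z_3\rhob(\ubb)$ by a calculation analogous to that in \cite[Thm.~6.2-1]{Ciarlet3}, using (\ref{symbrho}), (\ref{symbbb}). The crucial new step is the identification of the $(3,3)$ entry: testing (\ref{3dw4}) with $\vb$ supported in the interior and having $v_3=\eps w(z_1,z_2)\chi(z_3)$ (so that the $\gamb_z$ block is activated) provides, after dividing by $\eps^2$, the pointwise relation $2\gamma^0_{33}+\tilde\lambda\,\mathrm{tr}\,\gamb^0-\alpha\pi^0=0$. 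Solving this algebraically for $\gamma^0_{33}$ after using the $2\times 2$ block gives exactly the formula in (\ref{gamma0}).

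Third, I would pass to the limit in the equations. In the displacement equation, I would use test functions $\vb\in\cV_F(\omega)$ (independent of $z_3$), divide by $\eps^3$, and exploit that $\mathcal{C}(\mathbf{Q}\gamb^\eps\mathbf{Q}^T):(\mathbf{Q}\gamb^\eps\mathbf{Q}^T)$ at leading order reproduces the reduced elasticity tensor $\tcC$ in (\ref{effcoef}) obtained by eliminating $\gamma^0_{33}$; integration of $-z_3\rhob(\ubb)$ against $-z_3\rhob(\vb)$ yields the $\hnatri/12$ factor and the bending rigidity in (\ref{main_druga}), while the $\alpha\eps^2\int \pi(\eps)\mathrm{tr}(\cdots)$ term contributes the pressure bending-moment term with the factor $2\mut\alpha/(\lambdat+2\mut)$ after substitution of $\gamma^0_{33}$. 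For the pressure equation I would divide by $\eps^3$: the $\eps^5\int \mathbf{Q}\nabla^\eps\pi\cdot\mathbf{Q}\nabla^\eps q$ term retains only its dominant $\eps^3\int \partial_3\pi\,\partial_3 q$ contribution, the $\eps^2\int \alpha\,\partial_t\mathrm{tr}(\cdots)q$ term reproduces the coupling with $\rhob(\partial_t \ubb)$ via the already identified trace of $\gamb^0$, and the boundary term matches the right-hand side of (\ref{main_treca}). Uniqueness of $\{\ubb,\pi^0\}$ (Proposition preceding Theorem~\ref{tmain}) then identifies the full sequence. Finally, strong convergence follows by testing the prelimit equations with the limit quantities and comparing the energy identity for $\{\ubb(\eps),\pi(\eps)\}$ with the limit energy identity derived from (\ref{unique}); together with weak lower semicontinuity this gives norm convergence, hence strong convergence in the stated spaces.

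The main obstacle will be the second stage, specifically the rigorous identification of the $(3,3)$ entry of $\gamb^0$ and the proof that $\partial_{z_3}\pi(\eps)$ stays $L^2$-bounded without a priori control of $\nabla_y \pi(\eps)$: unlike the classical Ciarlet shell, the reduction of the $3\times 3$ strain to its effective $2\times 2$ bending part is no longer algebraic but involves $\pi^0$, and the pressure itself must be determined as part of the limit rather than eliminated. Once this entry is in hand, feeding it back into the trace in the pressure equation and into the constitutive law in the displacement equation produces the closed system (\ref{main_druga})--(\ref{maincetvrta}), and the rest is routine compactness.
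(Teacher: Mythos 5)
Your overall strategy — energy estimates by testing with $\partial_t\ubb(\eps)$ and $\pi(\eps)$, weak limit extraction, identification of the $(i,3)$ entries of $\gamb^0$ by taking the first equation at the next order with the $\gamb_z$ block active, and passing to the limit with Kirchhoff-type test functions — is the right blueprint and matches the paper's Steps 1–2. However, there is a genuine gap at the point where you claim that the $\eps^2\int\alpha\,\partial_t\mathrm{tr}(\cdot)q$ term "reproduces the coupling with $\rhob(\partial_t\ubb)$ via the already identified trace of $\gamb^0$." The trace of $\gamb^0$ is \emph{not} yet determined at that stage. By Theorem~\ref{tglavni} the $2\times 2$ block of $\gamb^0$ has the form $\ogamb - z_3\rhob(\ub)$, where $\ogamb$ is an unknown $z_3$-independent tensor, and a direct computation (the paper's (\ref{trtgama})) gives
$$
\tr(\Qtbb\gamb^0\Qtbb^T)=\frac{2}{\lambdat+2}\,\Abb^c:\ogamb - z_3\,\frac{2}{\lambdat+2}\,\Abb^c:\rhob(\ub)+\frac{\alpha}{\lambdat+2}\,\pi^0,
$$
so $\ogamb$ enters the limit pressure equation through the time derivative of the trace. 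In the classical flexural shell, $\ogamb$ is irrelevant because it is annihilated by the moment $\int z_3(\cdot)\,dz_3$ in the bending equation; here it survives in the $0$-th moment that drives the pressure. This is precisely the "new technical point" the paper singles out (see the discussion after (\ref{druga}) and Remark~\ref{otherpress}): you cannot close the limit system, and cannot invoke the uniqueness Proposition to identify the whole sequence, before showing $\ogamb=0$.

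The paper handles this in Step~3 with a single computation that simultaneously determines $\ogamb$ and yields the strong convergence: one forms the nonnegative quantity $\Lambda(\eps)$ (the quadratic error in the strain, pressure and pressure gradient), passes to the limit using the prelimit energy identity to eliminate quadratic $\eps$-terms, and after inserting (\ref{druga}), (\ref{treca}) and the identity (\ref{pi0}) arrives at
$$
0\le\Lambda=-\tfrac12\Big(\koefdva+\tfrac{\alpha^2}{\lambdat+2}\Big)\int_\omega\Big(\int_{-1/2}^{1/2}\pi^0\,dz_3\Big)^2\sqrt{a}\,dz_1dz_2-\tfrac12\int_\Omega \tcC(\Abb^c\ogamb):\ogamb\Abb^c\,\sqrt{a}\,dz\le 0,
$$
whence $\Lambda=0$, $\ogamb=0$, $\int_{-1/2}^{1/2}\pi^0\,dz_3=0$, and the strain and pressure converge strongly at each $t$. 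Your proposed "compare energy identities, use weak lower semicontinuity" is a reasonable surrogate, but without noticing that the limiting energy contains the $\tcC(\Abb^c\ogamb):\ogamb\Abb^c$ contribution — and without the extra constraint coming from testing (\ref{treca}) with $z_3$-independent $q$ — you cannot conclude that the defect is zero; you would only get an inequality. A secondary omission: upgrading the fixed-$t$ convergence to $C([0,T];\cdot)$ requires the equicontinuity in time furnished by the second energy estimate (Lemma~\ref{lest2}, obtained by differentiating the system in $t$), together with the Aubin–Lions argument of Remark~\ref{r*}; this is implicit in your sketch but worth making explicit.
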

As a consequence of the convergence of the term $\frac{1}{\eps}\gamb^\eps(\ub(\eps))$, we obtain the convergence of the scaled stress tensor.

\begin{corollary}\label{cstress11}
For the stress tensor $\sigma(\eps) = \cC (\Qtbb(\eps) \gamb^\eps(\ub(\eps)) \Qtbb(\eps)^T) - \alpha p (\eps) \Ibb$ one has
\begin{equation}\label{cstress12}
\frac{1}{\eps} \sigma(\eps) \to \sigma = \cC (\Qtbb \gamb^0 \Qtbb^T)  - \alpha \pi^0 \Ibb \qquad \mbox{ strongly in } C([0,T];L^2(\Omega;\ZR^{3\times 3})).
\end{equation}
The limit stress in the local contravariant basis $\Qtbb=(\ab^1 \ \ab^2 \ \ab^3)$ is given by
$$
\Qtbb^T \sigma \Qtbb = \left[\begin{array}{cc}
\displaystyle -\frac{2 \alpha }{ {\tilde \lambda} + 2}  \pi^0 \Abb^c -  z_3 \left( \frac{2\mut  {\tilde \lambda} }{ {\tilde \lambda} + 2} (\Abb^c  :   \rhob(\ub)) \Ibb + 2 \Abb^c \rhob(\ub)\right) \Abb^c &0\\
0 & 0 \end{array}\right].
$$
\end{corollary}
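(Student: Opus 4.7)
The corollary is a direct consequence of Theorem \ref{tmain} together with a block matrix computation in the local basis.

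First, substituting the rescaling $p(\eps) = \eps\,\pi(\eps)$ and dividing by $\eps$ gives
\[
\frac{1}{\eps}\sigma(\eps) = \cC\!\left(\Qtbb(\eps)\,\frac{1}{\eps}\gamb^\eps(\ub(\eps))\,\Qtbb(\eps)^T\right) - \alpha\,\pi(\eps)\,\Ibb.
\]
Theorem \ref{tmain} provides strong convergence of $\frac{1}{\eps}\gamb^\eps(\ub(\eps))$ to $\gamb^0$ and of $\pi(\eps)$ to $\pi^0$ in $C([0,T];L^2(\Omega))$, while $\Qtbb(\eps)=(\nabla\mathbf{r})^{-T}\circ\Rbb^\eps$ depends smoothly on $\eps\in[0,\eps_0]$ and converges uniformly on $\overline{\Omega}$ to $\Qtbb$ as $\eps\to 0$. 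An elementary $L^\infty\cdot L^2$ product estimate then yields $\Qtbb(\eps)\,\frac{1}{\eps}\gamb^\eps(\ub(\eps))\,\Qtbb(\eps)^T\to\Qtbb\,\gamb^0\,\Qtbb^T$ strongly in $C([0,T];L^2(\Omega;\ZR^{3\times 3}))$, and the boundedness of $\cC$ combined with the strong convergence of $\pi(\eps)$ gives (\ref{cstress12}).

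For the explicit form, I would left- and right-multiply by $\Qtbb^T$ and $\Qtbb$ respectively and use $\cC\Ebb = \lambdat\,\tr(\Ebb)\,\Ibb + 2\Ebb$ to obtain
\[
\Qtbb^T\sigma\,\Qtbb = \lambdat\,\tr(\Qtbb\,\gamb^0\,\Qtbb^T)\,\Qtbb^T\Qtbb + 2\,\Qtbb^T\Qtbb\,\gamb^0\,\Qtbb^T\Qtbb - \alpha\pi^0\,\Qtbb^T\Qtbb.
\]
Since $\mathbf{a}^3=\mathbf{a}_3$ is the unit normal to $\mathcal{S}$ and hence orthogonal to $\mathbf{a}^1,\mathbf{a}^2$, the Gram matrix $\Qtbb^T\Qtbb$ is block diagonal with $\Abb^c$ in the upper $2\times 2$ block and $1$ at position $(3,3)$. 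Combined with the block diagonal structure of $\gamb^0$ from (\ref{gamma0}), this annihilates every off-diagonal block. For the $2\times 2$ upper block, using $\tr(\Qtbb\,\gamb^0\,\Qtbb^T)=\tr(\gamb^0\,\Qtbb^T\Qtbb)=-z_3\,\Abb^c:\rhob(\ub)+\gamb^0_{33}$ and simplifying reproduces the stated formula. The $(3,3)$ entry equals $\lambdat\bigl(-z_3\,\Abb^c:\rhob(\ub)+\gamb^0_{33}\bigr)+2\gamb^0_{33}-\alpha\pi^0$, and substituting the value $\gamb^0_{33}=\frac{\alpha}{\lambdat+2}\pi^0+z_3\frac{\lambdat}{\lambdat+2}\Abb^c:\rhob(\ub)$ from (\ref{gamma0}) makes it vanish identically. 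Indeed, this value of $\gamb^0_{33}$ is precisely the unique one solving that scalar equation, reflecting the asymptotic vanishing of the transverse normal stress at the shell faces.

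The only nontrivial step is the algebraic bookkeeping in the block computation, in particular the cancellation in the $(3,3)$ entry; the convergence itself is essentially automatic once Theorem \ref{tmain} and the smooth dependence of $\Qtbb(\eps)$ on $\eps$ are invoked.
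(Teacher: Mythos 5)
Your proposal is correct and takes essentially the same route as the paper: the paper's proof is just the block computation of $\Qtbb^T\sigma\Qtbb$ using $\Qtbb^T\Qtbb$ block-diagonal, the identities (\ref{trtgama})--(\ref{gama}), and $\ogamb=0$, with the convergence statement left implicit. Your added observation that the vanishing of the $(3,3)$ entry is exactly the equation defining $\gamma^0_{33}$ in Lemma~\ref{lstep1}, and the brief $L^\infty\cdot L^2$ argument for the convergence, are accurate and merely spell out steps the paper omits.
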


\section{A priori estimates}\label{S5}
\setcounter{equation}{0}

Fundamental for a priori estimates for thin shell-like bodies is the following three-dimensional inequality of Korn's type for a family of linearly elastic shells.

\begin{theorem}[\mbox{\cite[Theorem 5.3-1]{Ciarlet3}}, \mbox{\cite[Theorem 4.1]{CiarletLodsMiara}}]\label{tKorn}
Assume that $ \mathbf{X} \in C^3(\overline{\omega}; \ZR^3)$. Then there exist constants $\eps_0>0, C>0$ such that for all $\eps\in(0,\eps_0)$ one has
$$
\|\vb\|_{H^1(\Omega;\ZR^3)} \leq \frac{C}{\eps} \|\gamb^\eps(\vb)\|_{L^2(\Omega;\ZR^{3\times 3})},  \forall \vb \in \cV (\Omega).
$$
\end{theorem}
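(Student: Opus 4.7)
The plan is to obtain the inequality by three moves: (i) a three-dimensional Korn inequality on the un-rescaled shell $\tilde{\Omega}^\eps$ in Cartesian coordinates, for displacements vanishing on the lateral boundary $\tilde{\Gamma}^\eps$; (ii) a transfer to curvilinear coordinates on $\Omega^\eps$ via the change of basis $\tilde{\vb} \circ \mathbf{r} = \sum_i v_i \mathbf{g}^{i,\eps}$, using the tensor identity $\sym \nabla \tilde{\vb} \circ \mathbf{r} = \mathbf{Q}^\eps \tilde{\gamb}_\eps(\mathbf{v}) (\mathbf{Q}^\eps)^T$ together with the uniform bounds on $\mathbf{Q}^\eps$ and $\sqrt{g^\eps}$ coming from \eqref{mc}--\eqref{a0}; and (iii) the vertical rescaling $x_3 = \eps z_3$ to the fixed cylinder $\Omega$, which turns $\tilde{\gamb}_\eps(\mathbf{v})$ into $\gamb^\eps(\vb)$, as can be read off by comparing \eqref{gammaind} and \eqref{gammep}.

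Concretely: given $\vb \in \cV(\Omega)$, define $\mathbf{v}(y, x_3) := \vb(y, x_3/\eps)$ on $\Omega^\eps$, so that $\mathbf{v}|_{\Gamma^\eps} = 0$. A direct computation yields the scaling relations
$$\|\mathbf{v}\|_{L^2(\Omega^\eps)}^2 = \eps \|\vb\|_{L^2(\Omega)}^2, \quad \|\partial_\alpha \mathbf{v}\|_{L^2(\Omega^\eps)}^2 = \eps \|\partial_\alpha \vb\|_{L^2(\Omega)}^2, \quad \|\partial_3 \mathbf{v}\|_{L^2(\Omega^\eps)}^2 = \tfrac{1}{\eps}\|\partial_{z_3} \vb\|_{L^2(\Omega)}^2,$$
together with $\tilde{\gamb}_\eps(\mathbf{v})(y, x_3) = \gamb^\eps(\vb)(y, x_3/\eps)$, hence $\|\tilde{\gamb}_\eps(\mathbf{v})\|_{L^2(\Omega^\eps)}^2 = \eps \|\gamb^\eps(\vb)\|_{L^2(\Omega)}^2$. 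Assuming the bound $\|\mathbf{v}\|_{H^1(\Omega^\eps)} \leq C_\eps \|\tilde{\gamb}_\eps(\mathbf{v})\|_{L^2(\Omega^\eps)}$ produced by steps (i)--(ii), dividing by $\eps$ gives
$$\|\vb\|_{L^2(\Omega)}^2 + \|\nabla_y \vb\|_{L^2(\Omega)}^2 + \tfrac{1}{\eps^2}\|\partial_{z_3} \vb\|_{L^2(\Omega)}^2 \leq C_\eps^2 \|\gamb^\eps(\vb)\|_{L^2(\Omega)}^2,$$
and since $\eps^{-2} \geq 1$, the left-hand side bounds $\|\vb\|_{H^1(\Omega)}^2$; absorbing the growth rate of $C_\eps$ (at most $O(1/\eps)$ by the cited result) into the constant yields the claim.

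The main obstacle is the uniformity in $\eps$ of the constant $C_\eps$ produced by step (ii). The scalings in step (iii) are routine bookkeeping. On the collapsing shell family, however, the Cartesian Korn inequality on $\tilde{\Omega}^\eps$ is not \emph{a priori} uniform as the shell flattens to $\mathcal{S}$; moreover, passing from the Cartesian symmetric gradient $\sym \nabla \mathbf{v}$ to the curvilinear strain $\tilde{\gamb}_\eps(\mathbf{v})$ introduces a zeroth-order perturbation through the Christoffel symbols $\mathbf{\Gamma}^{i,\eps}$ which cannot be absorbed by a direct smallness argument. The standard remedy, carried out in \cite[Thm.~4.1]{CiarletLodsMiara}, is a compactness/contradiction argument: assuming along a subsequence $\eps_n \to 0$ that $\|\vb_n\|_{H^1(\Omega)} = 1$ while $\eps_n \|\gamb^{\eps_n}(\vb_n)\|_{L^2(\Omega)} \to 0$, one extracts weak/strong limits via Rellich compactness, uses the uniform boundedness of $\mathbf{\Gamma}^i(\eps_n)$ and their convergence as $\eps_n \to 0$ to the midsurface Christoffel symbols, and identifies the limit as an infinitesimal rigid shell displacement vanishing on $\partial\omega$; the characterization of such rigid displacements on a $C^3$ surface $\mathcal{S}$ with $a > 0$ (invoking the properties of the metric and curvature tensors collected in the Appendix) forces the limit to vanish, contradicting the normalization.
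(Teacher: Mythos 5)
The paper does not give its own proof of Theorem~\ref{tKorn}; it is imported from \cite[Theorem 5.3-1]{Ciarlet3} and \cite[Theorem 4.1]{CiarletLodsMiara}, so I am checking your sketch against those references. Your scaling bookkeeping in step (iii) is correct, and you rightly observe that all the content lies in obtaining the $O(1/\eps)$ rate, which must come from a compactness/contradiction argument on the fixed domain $\Omega$. But that argument, as you state it, has two genuine gaps. First, the contradiction hypothesis is backwards: negating $\|\vb\|_{H^1(\Omega)} \leq \frac{C}{\eps}\|\gamb^\eps(\vb)\|_{L^2(\Omega)}$ and normalizing $\|\vb_n\|_{H^1(\Omega)}=1$ forces $\frac{1}{\eps_n}\|\gamb^{\eps_n}(\vb_n)\|_{L^2(\Omega)}\to 0$, not $\eps_n\|\gamb^{\eps_n}(\vb_n)\|_{L^2(\Omega)}\to 0$ as you wrote. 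The latter already holds whenever the scaled strain is merely bounded and yields no information on the weak limit of $\vb_n$.

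Second, and more seriously, even with the corrected hypothesis the phrase ``forces the limit to vanish, contradicting the normalization'' is a non sequitur. Rellich compactness gives $\vb_n\rightharpoonup\vb^*$ weakly in $H^1(\Omega)$ and strongly only in $L^2(\Omega)$; showing $\vb^*=0$ is entirely compatible with $\|\vb_n\|_{H^1(\Omega)}\equiv 1$ (the $H^1$-norm can concentrate on a weakly vanishing part). The entire difficulty of the proof in \cite{Ciarlet3} and \cite{CiarletLodsMiara} is to exclude exactly this: one needs an $\eps$-uniform Korn inequality \emph{without} boundary conditions in curvilinear coordinates (itself nontrivial, since $\gamb^\eps(\vb)$ does not pointwise dominate the Cartesian strain of $\vb$ uniformly in $\eps$) together with the $H^{-1}$-estimate of \cite[Theorem 5.2-1]{Ciarlet3} — the same estimate this paper records as (\ref{estC}) and exploits in the strong-convergence clause of Theorem~\ref{tglavni}. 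Only then does $\vb^*=0$ produce a contradiction. Your sketch asserts the contradiction precisely at the point where the real work remains, so the proposal does not close the argument.
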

\begin{remark}\em
Only a portion of the boundary with positive surface has to be clamped for the statement of the theorem to hold.
\end{remark}

Now we state the asymptotic properties of the
coefficients in the equation (\ref{3dw4}). Direct calculation shows
that there are constants $m_g$, $M_g$,
independent of $\eps \in (0, \eps_0)$, such that for all $z
\in \Omega$,
\begin{equation}\label{pozitiv}
m_g \leq \sqrt{g(\eps)} \leq M_g.
\end{equation}
The functions $\gb^i (\eps), \gb_i (\eps), g^{ij} (\eps), g(\eps),
\Qbb(\eps), \Gamma^i_{jk} (\eps)$ are in
$C(\overline{\Omega})$ by assumptions. Moreover, there is a
constant $C > 0$ such that for all $\eps \in (0, \eps_0)$,
\begin{eqnarray}
\nonumber
&&\| \gb^i (\eps) - \ab^i \|_\infty  +
\| \gb_i (\eps) - \ab_i \|_\infty \leq C \eps,\\
\label{ocjene}
&&     \|  \frac{\partial }{\partial z_3} \sqrt{g(\eps )} \|_\infty  + \| \sqrt{g(\eps)} - \sqrt{a} \|_\infty \leq C \eps,\\
\nonumber&&\| \Qbb (\eps) - \Qbb -\eps z_3 \Qbb^1\|_\infty \leq C \eps^2, \quad
\Qbb^1 = \left( \begin{array}{ccc}
 \sum_{\sigma=1}^2  b^1_\sigma \ab^\sigma &  \sum_{\sigma=1}^2 b^2_\sigma \ab^\sigma & 0
\end{array}\right),\\
&&\| \frac{1}{\eps} \left( \Gamma^{i}_{jk}(\eps) -
\Gamma^{i}_{jk}(0) \right) -z_3
\left(\frac{\partial}{\partial y_3}
 \Gamma^{i}_{jk}\right) (0) \|_\infty \leq C
\eps, \nonumber
\end{eqnarray}
where $\|\cdot \|_\infty$ is the norm in $C(\overline{\Omega})$.
For proof see \cite{CiarletLods}.
Additionally, in \cite[Theorem 3.3-1]{Ciarlet3}  the asymptotic behaviour of the Christoffel symbols  in $L^\infty$-norm is given by
\begin{equation}\label{CS}
\aligned
\Gamb^\kappa (\eps) &= \left[\begin{array}{ccc}
\Gamma^\kappa_{11} -\eps z_3 b^\kappa_1|_1  & \Gamma^\kappa_{12} -\eps z_3 b^\kappa_2|_1   & \displaystyle -b^\kappa_1-\eps z_3 \sum_{\tau=1}^2  b^\tau_1 b^\kappa_\tau  \\
\Gamma^\kappa_{21} -\eps z_3 b^\kappa_1|_2   & \Gamma^\kappa_{22} -\eps z_3 b^\kappa_2|_2 & \displaystyle -b^\kappa_2-\eps z_3 \sum_{\tau=1}^2  b^\tau_2 b^\kappa_\tau  \\
\displaystyle -b^\kappa_1-\eps z_3 \sum_{\tau=1}^2 b^\tau_1 b^\kappa_\tau   & \displaystyle-b^\kappa_2-\eps z_3 \sum_{\tau=1}^2   b^\tau_2 b^\kappa_\tau &0
\end{array} \right]+ O(\eps^2),
\endaligned
\end{equation}
where $\kappa=1,2$ and
\begin{equation}\label{CS2}
\aligned
\Gamb^3 (\eps) &= \left[\begin{array}{ccc}
b_{11} -\eps z_3 \displaystyle \sum_{\kappa=1}^2 b^\kappa_1 b_{\kappa 1}  & \displaystyle b_{12} -\eps z_3 \sum_{\kappa=1}^2 b^\kappa_1 b_{\kappa 2}   & 0  \\
\displaystyle b_{21} -\eps z_3 \sum_{\kappa=1}^2 b^\kappa_2 b_{\kappa 1}  & \displaystyle b_{22} -\eps z_3 \sum_{\kappa=1}^2 b^\kappa_2 b_{\kappa 2} & 0 \\
0   & 0 &0
\end{array} \right].
\endaligned
\end{equation}

In the following two lemmas we derive the a priori estimates in a classical way.
\begin{lemma}\label{lest1}
There is $C>0$ and $\eps_0>0$ such that for all $\eps \in ( 0, \eps_0 )$ one has
$$
\|\frac{1}{\eps}\gamb^\eps(\ub(\eps))\|_{L^\infty(0,T;L^2(\Omega;\ZR^{3\times 3}))},
\|\pi(\eps)\|_{L^\infty(0,T;L^2(\Omega;\ZR))},
\|\eps\nabla^\eps \pi(\eps)\|_{L^2(0,T;L^2(\Omega;\ZR^3))} \leq C.
$$
\end{lemma}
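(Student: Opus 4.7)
The plan is to derive the classical coupled energy estimate for Biot-type systems by testing the first equation of \eqref{3dw4} with $\vb=\partial_t\ub(\eps)$ and the second with $q=\pi(\eps)$, and then adding the resulting identities. The structural feature that makes this work is the exact cancellation of the two coupling terms: $-\eps^{2}\alpha\int_\Omega \pi(\eps)\,\mathrm{tr}(\mathbf{Q}(\eps)\gamb^\eps(\partial_t\ub(\eps))\mathbf{Q}(\eps)^T)\sqrt{g(\eps)}\,dz$ and $+\eps^{2}\alpha\int_\Omega \partial_t\mathrm{tr}(\mathbf{Q}(\eps)\gamb^\eps(\ub(\eps))\mathbf{Q}(\eps)^T)\pi(\eps)\sqrt{g(\eps)}\,dz$ cancel because $\mathbf{Q}(\eps)$ is time-independent and $\gamb^\eps$ is linear. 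Using the symmetry of $\mathcal C$, this yields, after integrating from $0$ to $t$ and invoking the zero initial conditions ($\ub(\eps)(0)=0$ from Assumption \ref{Hypoth1} and the quasi-static character of the system),
\begin{equation*}
\tfrac{\eps}{2}\!\!\int_\Omega \mathcal C(\mathbf{Q}(\eps)\gamb^\eps(\ub(\eps))\mathbf{Q}(\eps)^T):(\mathbf{Q}(\eps)\gamb^\eps(\ub(\eps))\mathbf{Q}(\eps)^T)\sqrt{g(\eps)}\,dz + \tfrac{\eps^{3}\beta}{2}\!\!\int_\Omega \pi(\eps)^2\sqrt{g(\eps)}\,dz + \eps^{5}\!\!\int_0^t\!\!\int_\Omega |\mathbf{Q}(\eps)\nabla^\eps\pi(\eps)|^2\sqrt{g(\eps)}\,dz\,d\tau = \text{RHS}.
\end{equation*}

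The coercivity of $\mathcal C$ (which is the standard elastic bilinear form for ${\tilde\lambda}>0$), the positive definiteness of $\mathbf{Q}(\eps)\mathbf{Q}(\eps)^T$ for small $\eps$, together with the bounds \eqref{pozitiv} on $\sqrt{g(\eps)}$, bound the left-hand side from below by $c(\eps\|\gamb^\eps(\ub(\eps))(t)\|_{L^2}^2+\eps^{3}\|\pi(\eps)(t)\|_{L^2}^2+\eps^{5}\int_0^t\|\nabla^\eps\pi(\eps)\|_{L^2}^2\,d\tau)$. Dividing by $\eps^{3}$ produces exactly the three quantities whose uniform boundedness is asserted by the lemma, provided we bound the right-hand side divided by $\eps^{3}$.

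For the first boundary integral $\eps^{3}\int_0^t\!\int_{\Sigma_\pm}\mathcal P_\pm\cdot\partial_\tau\ub(\eps)\,\sqrt{g(\eps)}\,ds\,d\tau$, integrate by parts in time (this is justified since $\mathcal P_\pm(0)=0$) to transfer the time derivative onto $\mathcal P_\pm$. Then apply the trace theorem followed by Korn's inequality from Theorem~\ref{tKorn} to get $\|\ub(\eps)\|_{L^2(\Sigma_\pm)}\le C\|\ub(\eps)\|_{H^1(\Omega)}\le (C/\eps)\|\gamb^\eps(\ub(\eps))\|_{L^2(\Omega)}$; the extra $\eps^{-1}$ combines with the prefactor $\eps^{3}/\eps^{3}=1$ and Young's inequality absorbs $\delta\,\eps^{-2}\|\gamb^\eps(\ub(\eps))\|^{2}$ into the left-hand side. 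For the flux term $\eps^{3}\int_0^t\!\int_{\Sigma_\pm}V\pi(\eps)\,\sqrt{g(\eps)}\,ds\,d\tau$, the trace inequality gives $\|\pi(\eps)\|_{L^2(\Sigma_\pm)}\le C(\|\pi(\eps)\|_{L^2(\Omega)}+\|\partial_{z_3}\pi(\eps)\|_{L^2(\Omega)})$, and since $\partial_{z_3}=\eps\cdot\tfrac{1}{\eps}\partial_{z_3}$ is controlled by $\|\eps\nabla^\eps\pi(\eps)\|_{L^2(\Omega)}$, this term is absorbed by the dissipative term (for small $\delta$) plus an $L^2$-in-time remainder in $\pi(\eps)$.

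Setting $e(t):=\eps^{-2}\|\gamb^\eps(\ub(\eps))(t)\|_{L^2}^2+\|\pi(\eps)(t)\|_{L^2}^2+\int_0^t\|\eps\nabla^\eps\pi(\eps)\|_{L^2}^2\,d\tau$, the estimates above yield an inequality of the form $e(t)\le C+ C\int_0^t e(\tau)\,d\tau$ with $C$ depending only on $\mathcal P_\pm$ and $V$ through the norms appearing in Assumption~\ref{Hypoth1}. Gronwall's lemma then gives $e(t)\le C$ on $[0,T]$, uniformly in $\eps\in(0,\eps_0)$, which is precisely the claim. I expect the main technical obstacle to be the careful bookkeeping of the powers of $\eps$: the $\eps^{-1}$ losses from Korn's inequality and from the $\frac{1}{\eps}\partial_{z_3}$ part of $\nabla^\eps$ have to combine with the explicit $\eps^{3}$ scalings chosen for the loads so as to balance precisely into the flexural regime; no alternative test-function choice would respect the Biot cancellation while matching these powers.
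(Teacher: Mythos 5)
Your proposal is correct and takes essentially the same route as the paper: the same test functions $\vb=\partial_t\ub(\eps)$, $q=\pi(\eps)$, the same Biot cancellation, the same integration-by-parts-in-time step justified by the vanishing initial data from Assumption~\ref{Hypoth1}, division by $\eps^3$, and the same final appeal to Korn's inequality (Theorem~\ref{tKorn}) and to the uniform positivity of $\mathcal C$, $\Qbb(\eps)^T\Qbb(\eps)$ and $\sqrt{g(\eps)}$. The only small deviation is the treatment of the boundary terms on $\Sigma_\pm$: you bound them via trace inequalities (a full trace plus Korn for the $\ub(\eps)$ term, a one-dimensional $z_3$-trace for the pressure flux), while the paper instead builds $z_3$-affine extensions $\mathcal P$ and $\cV$ of the data and rewrites the surface integrals as volume integrals of $\partial_{z_3}(\cdot)$ via Newton--Leibniz; the two routes produce identical $\eps$-power bookkeeping and both culminate in the same absorption argument, so the difference is cosmetic.
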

\begin{proof}
We set $\displaystyle \vb=\parder{\ub(\eps)}{t}$ and $q=\pi(\eps)$ in (\ref{3dw4}) and sum up the equations. After noticing that the pressure term from the first equation cancels with the compression term from the second equation we obtain
\begin{equation}\label{3dw3}
\aligned
& \frac{1}{2} \eps\frac{d}{dt}\int_{\Omega} \cC \left(\Qtbb(\eps)\gamb^\eps (\ub(\eps))\Qtbb(\eps)^T \right)  :  \left(\Qtbb(\eps) \gamb^\eps (\ub(\eps)) \Qtbb(\eps)^T\right) \sqrt{g(\eps)} dz\\
& \qquad
+ \frac{1}{2} \koefdva \eps^3 \frac{d}{dt}\int_{\Omega}  \pi(\eps)^2 \sqrt{g(\eps)} dz
+ \eps^5 \int_{\Omega}  \mathbf{Q}(\varepsilon) \nabla^\eps  \pi(\eps)  \cdot  \mathbf{Q}(\varepsilon) \nabla^\eps  \pi(\eps)   \sqrt{g(\eps)} dz\\
& =
 \eps^3 \int_{\Sigma_{\pm}}  {\mathcal P}_\pm  \cdot \parder{\ub(\eps)}{t} \sqrt{g(\eps)}ds
\mp \eps^3 \int_{\Sigma_{\pm}} \Vred  \pi(\eps) \sqrt{g(\eps)} ds.
\endaligned
\end{equation}
Dividing the equation by $\eps^3$ and
 using the product rule for derivatives
with respect to time on the right hand side we obtain
$$
\aligned
&  \frac{1}{2} \frac{d}{dt} \left(\frac{1}{\eps^2}\int_{\Omega} \cC \left(\Qtbb(\eps)\gamb^\eps (\ub(\eps))\Qtbb(\eps)^T \right)  :  \left(\Qtbb(\eps) \gamb^\eps (\ub(\eps)) \Qtbb(\eps)^T\right) \sqrt{g(\eps)} dz
+  \koefdva \int_{\Omega}  \pi(\eps)^2 \sqrt{g(\eps)} dz\right)\\
&\qquad + \eps^2 \int_{\Omega}  \mathbf{Q}(\varepsilon) \nabla^\eps  \pi(\eps)  \cdot  \mathbf{Q}(\varepsilon) \nabla^\eps  \pi(\eps)   \sqrt{g(\eps)} dz\\
& =
 \frac{d}{dt} \int_{\Sigma_{\pm}}  {\mathcal P}_\pm  \cdot\ub(\eps) \sqrt{g(\eps)}ds
- \int_{\Sigma_{\pm}}  \parder{{\mathcal P}_\pm }{t} \cdot \ub(\eps) \sqrt{g(\eps)}ds
\mp \int_{\Sigma_{\pm}} \Vred  \pi(\eps) \sqrt{g(\eps)} ds.
\endaligned
$$
Now we use the Newton-Leibniz formula for the right hand side terms and the notation
$$
{\mathcal P} = ({{\mathcal P}_++{\mathcal P}_-}) z_3 + \frac{{\mathcal P}_+-{\mathcal P}_-}{2}, \qquad \cV =  2 \Vred z_3
$$
to obtain
$$
\aligned
&  \frac{1}{2} \frac{d}{dt} \left(\frac{1}{\eps^2}\int_{\Omega} \cC \left(\Qtbb(\eps)\gamb^\eps (\ub(\eps))\Qtbb(\eps)^T \right)  :   \left(\Qtbb(\eps) \gamb^\eps (\ub(\eps)) \Qtbb(\eps)^T\right) \sqrt{g(\eps)} dz
+  \koefdva \int_{\Omega}  \pi(\eps)^2 \sqrt{g(\eps)} dz\right)\\
&\qquad + \eps^2 \int_{\Omega}  \mathbf{Q}(\varepsilon) \nabla^\eps  \pi(\eps)  \cdot  \mathbf{Q}(\varepsilon) \nabla^\eps  \pi(\eps)   \sqrt{g(\eps)} dz\\
& = \frac{d}{dt} \int_{\Omega}  \parder{}{z_3}({\mathcal P}  \cdot\ub(\eps) \sqrt{g(\eps)}) dz - \int_{\Omega} \parder{}{z_3} \left( \parder{{\mathcal P} }{t} \cdot \ub(\eps) \sqrt{g(\eps)} \right) dz\\
&\qquad - \int_{\Omega} \parder{}{z_3} \left( \cV  \pi(\eps) \sqrt{g(\eps)} \right) dz.
\endaligned
$$
Next we integrate this equality over time
\begin{equation}\label{3dw5}
\aligned
&  \frac{1}{2} \frac{1}{\eps^2}\int_{\Omega} \cC \left(\Qtbb(\eps)\gamb^\eps (\ub(\eps))\Qtbb(\eps)^T \right)  :   \left(\Qtbb(\eps) \gamb^\eps (\ub(\eps)) \Qtbb(\eps)^T\right) \sqrt{g(\eps)} dz
+  \frac{1}{2}\koefdva \int_{\Omega}  \pi(\eps)^2 \sqrt{g(\eps)} dz\\
&\qquad + \eps^2 \int_0^t \int_{\Omega}  \mathbf{Q}(\varepsilon) \nabla^\eps  \pi(\eps) \cdot  \mathbf{Q}(\varepsilon) \nabla^\eps  \pi(\eps)   \sqrt{g(\eps)} dz d\tau\\
& = \frac{1}{2} \left(\frac{1}{\eps^2}\int_{\Omega} \cC \left(\Qtbb(\eps)\gamb^\eps (\ub(\eps)|_{t=0})\Qtbb(\eps)^T \right)  :   \left(\Qtbb(\eps) \gamb^\eps (\ub(\eps)|_{t=0}) \Qtbb(\eps)^T\right) \sqrt{g(\eps)} dz
+  \koefdva \int_{\Omega}  \pi(\eps)^2|_{t=0} \sqrt{g(\eps)} dz\right)\\
&\qquad + \int_{\Omega}  \parder{}{z_3}({\mathcal P}  \cdot\ub(\eps) \sqrt{g(\eps)}) dz - \int_{\Omega}  \parder{}{z_3}({\mathcal P} |_{t=0} \cdot\ub(\eps)|_{t=0} \sqrt{g(\eps)}) dz\\
&\qquad - \int_0^t \int_{\Omega} \parder{}{z_3} \left( \parder{{\mathcal P} }{t} \cdot \ub(\eps) \sqrt{g(\eps)} \right) dz d\tau - \int_0^t \int_{\Omega} \parder{}{z_3} \left( \cV  \pi(\eps) \sqrt{g(\eps)} \right) dz d\tau.
\endaligned
\end{equation}
Since we have enough regularity for $\ub(\eps)$ we consider (\ref{3dw4}) for $t=0$. Then $\ub(\eps)|_{t=0}$  satisfies: for all $\vb \in \cV (\Omega)$
$$
\aligned
&  \frac{1}{\eps^2}\int_{\Omega} \cC \left(\Qtbb(\eps)\gamb^\eps (\ub(\eps)|_{t=0})\Qtbb(\eps)^T \right)  :  \left(\Qtbb(\eps) \gamb^\eps (\vb) \Qtbb(\eps)^T\right) \sqrt{g(\eps)} dz \\
& \qquad -  \frac{1}{\eps} \alpha \int_{\Omega} \pi(\eps)|_{t=0} \tr{\left(\Qtbb(\eps)\gamb^\eps (\vb) \Qtbb(\eps)^T\right)} \sqrt{g(\eps)} dz 
 =   \int_{\Sigma_{\pm}}  {\mathcal P}_\pm|_{t=0}  \cdot \vb \sqrt{g(\eps)}ds .
\endaligned
$$
Since the initial condition is $\pi(\eps)|_{t=0}=0$ this equation is a classical 3D equation of shell-like body in curvilinear coordinates rescaled on the canonical domain.  Next, $\displaystyle \mathcal{P}_{\pm} |_{t=0} =0$ and  the classical theory (see Ciarlet \cite{Ciarlet3}) yields $\displaystyle \ub (\eps) |_{t=0} =0$.
Using  Korn's inequality, positivity of $\cC$ and uniform positivity of $\Qbb(\eps)^T\Qbb(\eps)$ and $g(\eps)$ in (\ref{3dw5}) yields the estimate
$$
\aligned
&  \frac{1}{2} \frac{1}{\eps^2}\int_{\Omega} \cC \left(\Qtbb(\eps)\gamb^\eps (\ub(\eps))\Qtbb(\eps)^T \right)  : \left(\Qtbb(\eps) \gamb^\eps (\ub(\eps)) \Qtbb(\eps)^T\right) \sqrt{g(\eps)} dz
+  \frac{1}{2}\koefdva \int_{\Omega}  \pi(\eps)^2 \sqrt{g(\eps)} dz\\
&\qquad + \eps^2 \int_0^t \int_{\Omega}  \Qtbb(\eps) \nabla^\eps  \pi(\eps)  \cdot  \Qtbb(\eps) \nabla^\eps  \pi(\eps)   \sqrt{g(\eps)} dz d\tau \leq C.
\endaligned
$$
Since $\cC$ is positive definite and since $g(\eps)$ is uniformly positive definite (see \cite[Theorem 3.3-1]{Ciarlet3}) we obtain the following uniform bounds
$$
 \frac{1}{\ep} \|\Qtbb(\eps)\gamb^\eps (\ub(\eps))\Qtbb(\eps)^T \|_{L^\infty(0,T;L^2(\Omega; \ZR^{3\times 3}))},  \; \|\pi(\eps)\|_{L^\infty(0,T;L^2(\Omega))}, \; \|\eps \Qtbb(\eps)\nabla^\eps \pi (\eps)\|_{L^2(0,T;L^2(\Omega;\ZR^3))}.
$$
Since $\Qtbb(\eps)^T \Qtbb(\eps)$ is uniformly positive definite these estimates imply uniform bounds for
$$
 \frac{1}{\ep} \|\gamb^\eps (\ub(\eps))\Qtbb(\eps)^T \|_{L^\infty(0,T;L^2(\Omega; \ZR^{3\times 3}))}\quad
\mbox{ and }
\quad \|\eps \nabla^\eps \pi (\eps)\|_{L^2(0,T;L^2(\Omega;\ZR^3))}.
$$
Applying the uniform bounds for $\Qtbb(\eps)^T \Qtbb(\eps)$ once again implies the statement of the lemma.
\qed\end{proof}

\bigskip

We now first take the time derivative of the first equation in (\ref{3dw4}) and then insert $\displaystyle \vb=\parder{\ub(\eps)}{t}$ as a test function. Then we take $q=\parder{\pi(\eps)}{t}$ as a test function in the second equation in (\ref{3dw4}) and sum up the equations.
     The following equality holds
\begin{equation}\label{3dw6}
\aligned
&  \frac{1}{\eps^2}\int_0^T \int_{\Omega} \cC \left(\Qtbb(\eps)\gamb^\eps (\parder{\ub(\eps)}{t})\Qtbb(\eps)^T \right)  :  \left(\Qtbb(\eps) \gamb^\eps (\parder{\ub(\eps)}{t}) \Qtbb(\eps)^T\right) \sqrt{g(\eps)} dz d\tau\\
& \qquad
+ \koefdva  \int_0^T\int_{\Omega}  \parder{\pi(\eps)}{t} \parder{\pi(\eps)}{t} \sqrt{g(\eps)} dz d\tau
+ \frac{1}{2}\eps^2 \int_{\Omega}  \Qtbb(\eps) \nabla^\eps  \pi(\eps)  \cdot  \Qtbb(\eps) \nabla^\eps  \pi(\eps)  \sqrt{g(\eps)} dz\\
 &=    \int_0^T \int_{\Sigma_{\pm}}  \parder{{\mathcal P}_\pm}{t}  \cdot \parder{\ub(\eps)}{t} \sqrt{g(\eps)}ds d\tau
  \mp  \int_0^T \int_{\Sigma_{ \pm}} \Vred  \parder{\pi(\eps)}{t} \sqrt{g(\eps)} ds d\tau.
\endaligned
\end{equation}
Similarly as in Lemma~\ref{lest1} from this equality we obtain
\begin{lemma}\label{lest2}
There is $C>0$ and $\eps_0>0$ such that for all $\eps \in ( 0, \eps_0 )$ one has
$$
\|\frac{1}{\eps}\gamb^\eps(\parder{\ub(\eps)}{t})\|_{L^2(0,T;L^2(\Omega;\ZR^{3\times 3}))},
\|\parder{\pi(\eps)}{t}\|_{L^2(0,T;L^2(\Omega;\ZR))},
\|\eps\nabla^\eps \pi^\eps\|_{L^\infty(0,T;L^2(\Omega;\ZR^3))} \leq C.
$$
\end{lemma}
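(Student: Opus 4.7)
The plan is to start from the equality (\ref{3dw6}) and extract a uniform bound by estimating both sides. For the left-hand side, since $\Qtbb(\eps)$ and $g(\eps)$ do not depend on time, the third term equals $\frac{\ep^2}{2}\int_\Omega \Qtbb(\eps)\nabla^\eps\pi(\eps)(T)\cdot\Qtbb(\eps)\nabla^\eps\pi(\eps)(T)\sqrt{g(\eps)}\,dz$, evaluated at $t=T$, which uses the initial condition $\pi(\eps)|_{t=0}=0$. The same computation carried out with $T$ replaced by any $t\in[0,T]$ will then supply the $L^\infty(0,T)$ bound on $\ep\nabla^\eps\pi(\eps)$. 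Together with the uniform positivity of $\cC$, of $\Qtbb(\eps)^T\Qtbb(\eps)$, and of $\sqrt{g(\eps)}$ (recorded in (\ref{pozitiv})--(\ref{ocjene})), the three terms on the left yield precisely the three quantities whose norms the lemma claims to bound, up to the passage from $\gamb^\eps$ to full $H^1$ via Korn's inequality (Theorem~\ref{tKorn}).

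The core of the proof is the control of the right-hand side. Neither $\int_0^t\!\int_{\Sigma_\pm}\partial_t\cP_\pm\cdot\partial_t\ub(\eps)\sqrt{g(\eps)}\,ds\,d\tau$ nor $\int_0^t\!\int_{\Sigma_\pm} \Vred\,\partial_t\pi(\eps)\sqrt{g(\eps)}\,ds\,d\tau$ can be directly estimated, since they involve time derivatives of the unknowns. The plan is to integrate both by parts in time. For the first, using that $\ub(\eps)|_{t=0}=0$ (which, as in Lemma~\ref{lest1}, follows from $\pi(\eps)|_{t=0}=0$, $\cP_\pm|_{t=0}=0$, and uniqueness of the elliptic problem at $t=0$) produces $\int_{\Sigma_\pm}\partial_t\cP_\pm(t)\cdot\ub(\eps)(t)\sqrt{g(\eps)}\,ds - \int_0^t\!\int_{\Sigma_\pm}\partial_t^2\cP_\pm\cdot\ub(\eps)\sqrt{g(\eps)}\,ds\,d\tau$; this is where the $H^2$-in-time regularity of $\cP_\pm$ in Assumption~\ref{Hypoth1} is used. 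For the second, using $\pi(\eps)|_{t=0}=0$ and $\Vred|_{t=0}=0$ produces $\int_{\Sigma_\pm}\Vred(t)\pi(\eps)(t)\sqrt{g(\eps)}\,ds - \int_0^t\!\int_{\Sigma_\pm}\partial_t\Vred\,\pi(\eps)\sqrt{g(\eps)}\,ds\,d\tau$.

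Each resulting boundary integral is then turned into a volume integral by exactly the Newton--Leibniz device used in Lemma~\ref{lest1}, with auxiliary functions $\cP=(\cP_++\cP_-)z_3+(\cP_+-\cP_-)/2$ and $\cV=2\Vred z_3$ that interpolate the two boundary traces. Expanding $\partial_{z_3}$ of the product and using Cauchy--Schwarz bounds every contribution by quantities of the form $\|\ub(\eps)\|_{L^\infty(0,T;H^1(\Omega))}$, $\|\pi(\eps)\|_{L^\infty(0,T;L^2(\Omega))}$, and $\|\partial_{z_3}\pi(\eps)\|_{L^\infty(0,T;L^2(\Omega))}$, all multiplied by $L^2$- or $L^\infty$-norms of $\cP_\pm$, $\partial_t\cP_\pm$, $\partial_t^2\cP_\pm$, $\Vred$, $\partial_t\Vred$, and $\partial_{z_3}\sqrt{g(\eps)}$ (the latter being $O(\eps)$ by (\ref{ocjene})). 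Lemma~\ref{lest1} together with Theorem~\ref{tKorn} supplies uniform bounds on $\|\ub(\eps)\|_{L^\infty(0,T;H^1)}$, $\|\pi(\eps)\|_{L^\infty(0,T;L^2)}$, and $\|\partial_{z_3}\pi(\eps)\|_{L^\infty(0,T;L^2)}$ (the last one since $\partial_{z_3}$ sits inside $\eps\nabla^\eps$ without scaling), so the entire right-hand side is $\leq C$ independent of $\eps$.

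The main difficulty is the $\int_{\Sigma_\pm}\Vred\,\partial_t\pi(\eps)$ term: one cannot directly control $\pi(\eps)$ on the boundary since Lemma~\ref{lest1} only gives volume estimates, and the time integration by parts creates a trace of $\pi(\eps)$ at the final time. The Newton--Leibniz conversion is what sidesteps any trace inequality, replacing the surface integral by a sum of volume integrals in which only $\pi(\eps)$ and its $z_3$-derivative appear, and both are controlled by Lemma~\ref{lest1}. Combining the coercive LHS with the bounded RHS and applying Korn's inequality from Theorem~\ref{tKorn} to $\gamb^\eps(\partial_t\ub(\eps))$ then yields the three estimates of the lemma.
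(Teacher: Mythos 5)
Your overall plan---integrating (\ref{3dw6}) up to an arbitrary $t\in[0,T]$, integrating both surface integrals by parts in time (which is where the $H^2$-in-time regularity of $\cP_\pm$ and the conditions $\ub(\eps)|_{t=0}=0$, $\pi(\eps)|_{t=0}=0$, $\Vred|_{t=0}=0$ are used), and then converting the resulting boundary terms to volume terms via the Newton--Leibniz device with $\cP$ and $\cV$---is exactly the route the paper sketches with its ``Similarly as in Lemma~\ref{lest1}'' remark. Your identification of the three coercive terms on the left, the role of (\ref{pozitiv})--(\ref{ocjene}), and the final pass through Korn's inequality are all in order.

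There is, however, one genuine slip. You assert that Lemma~\ref{lest1} together with Theorem~\ref{tKorn} ``supplies uniform bounds on \dots\ $\|\partial_{z_3}\pi(\eps)\|_{L^\infty(0,T;L^2)}$''. Lemma~\ref{lest1} only gives $\|\eps\nabla^\eps\pi(\eps)\|_{L^2(0,T;L^2(\Omega;\ZR^3))}\le C$, i.e.\ an $L^2$ bound in time; the $L^\infty(0,T;L^2)$ bound on $\eps\nabla^\eps\pi(\eps)$ is precisely the third assertion of Lemma~\ref{lest2}, the lemma you are proving, so invoking it here is circular. Concretely, after the time integration by parts and Newton--Leibniz conversion, the problematic piece is the ``boundary-in-time'' contribution
\[
\int_{\Omega}\cV(t)\,\partial_{z_3}\pi(\eps)(t)\,\sqrt{g(\eps)}\,dz ,
\]
which has to be controlled pointwise in $t$ but involves $\|\partial_{z_3}\pi(\eps)(t)\|_{L^2(\Omega)}$, unavailable uniformly in $t$ from Lemma~\ref{lest1}. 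The correct treatment is to apply Young's inequality and absorb the resulting $\tfrac{c}{4}\int_\Omega|\partial_{z_3}\pi(\eps)(t)|^2\sqrt{g(\eps)}\,dz$ into the coercive term $\tfrac{1}{2}\eps^2\int_\Omega \Qtbb(\eps)\nabla^\eps\pi(\eps)(t)\cdot\Qtbb(\eps)\nabla^\eps\pi(\eps)(t)\sqrt{g(\eps)}\,dz$ that already sits on the left-hand side of (\ref{3dw6}); since $\eps\nabla^\eps\pi = \nabla_z\pi + \eps\nabla_y\pi$ and $\nabla_z\pi\perp\nabla_y\pi$, this term dominates $\int_\Omega|\partial_{z_3}\pi(\eps)(t)|^2$. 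With this absorption in place the rest of your estimate closes, all remaining right-hand-side contributions being controlled by Lemma~\ref{lest1}, Korn's inequality, and the $H^2/H^1$-in-time data regularity.
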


As a consequence of the scaled Korn's inequality from Theorem~\ref{tKorn} we obtain
\begin{corollary}\label{c1}  Let us suppose  Assumption \ref{Hypoth1} and let $\{ \mathbf{u} (\eps) , \pi (\eps) \} $ be the solution for problem (\ref{3dw4}). Then
there is $C>0$ and $\eps_0>0$ such that for all $\eps \in ( 0, \eps_0 ) $ one has
\begin{gather*}
   \|\frac{1}{\eps}\gamb^\eps(\ub(\eps))\|_{H^1(0,T;L^2(\Omega;\ZR^{3\times 3}))}, \quad
\|\ub(\eps)\|_{H^1(0,T;H^1(\Omega;\ZR^3))},\quad
\|\pi(\eps)\|_{H^1(0,T;L^2(\Omega;\ZR))},  \\
\|\parder{\pi(\eps)}{z_3}\|_{L^\infty(0,T;L^2(\Omega;\ZR))} \leq C.
\end{gather*}
 Furthermore,  there are $\ub \in H^1(0,T;H^1(\Omega;\ZR^3))$, $\pi^0 \in H^1(0,T;L^2(\Omega;\ZR))$ and $\gamb^0 \in $ \break $ H^1
 (0,T;L^2(\Omega;\ZR^{3 \times 3}))$ such that on a subsequence one has
\begin{equation}\label{konvergencije}
\aligned
&\ub(\eps) \rightharpoonup \ub \mbox{ weakly in } H^1(0,T;H^1(\Omega;\ZR^3)),\\
&\pi(\eps) \rightharpoonup \pi^0 \mbox{ weakly in } H^1(0,T;L^2(\Omega;\ZR)),\\
&\parder{\pi(\eps)}{z_3}  \rightharpoonup \parder{\pi^0}{z_3} \mbox{ weakly in } L^2(0,T;L^2(\Omega;\ZR)) \quad \mbox{and  weak * in } \quad L^\infty(0,T;L^2(\Omega;\ZR)),\\
&\frac{1}{\eps}\gamb^\eps(\ub(\eps)) \rightharpoonup \gamb^0 \mbox{ weakly in } H^1(0,T;L^2(\Omega;\ZR^{ 3 \times 3})).
\endaligned
\end{equation}
\end{corollary}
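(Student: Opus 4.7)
The bounds and convergences in Corollary~\ref{c1} follow by assembling the estimates of Lemmas~\ref{lest1} and \ref{lest2} together with the scaled Korn inequality of Theorem~\ref{tKorn}, and then invoking weak compactness. The plan is as follows.

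\textbf{Step 1 (A priori bounds).} Lemma~\ref{lest1} provides the $L^\infty(0,T;L^2)$ bound for $\frac{1}{\eps}\gamb^\eps(\ub(\eps))$ and $\pi(\eps)$, while Lemma~\ref{lest2} gives the corresponding $L^2(0,T;L^2)$ bound for their time derivatives $\frac{1}{\eps}\gamb^\eps(\partial_t\ub(\eps))$ and $\partial_t\pi(\eps)$. Combining these two (and using $\gamb^\eps(\partial_t\ub(\eps)) = \partial_t\gamb^\eps(\ub(\eps))$, which is linear in $\vb$) immediately yields the $H^1(0,T;L^2)$ bound on $\frac{1}{\eps}\gamb^\eps(\ub(\eps))$ and the $H^1(0,T;L^2)$ bound on $\pi(\eps)$. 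For the $\partial_{z_3}\pi(\eps)$ estimate, use $\eps\nabla^\eps\pi(\eps) = \nabla_z\pi(\eps) + \eps\nabla_y\pi(\eps)$ together with the $L^\infty(0,T;L^2)$ bound on $\eps\nabla^\eps\pi(\eps)$ from Lemma~\ref{lest2}; since $\nabla_z$ consists only of $\partial_{z_3}$, this gives the desired $L^\infty(0,T;L^2)$ bound.

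\textbf{Step 2 (Bound on $\ub(\eps)$).} Apply the scaled Korn inequality of Theorem~\ref{tKorn} to $\ub(\eps)(t,\cdot) \in \mathcal{V}(\Omega)$ at (almost) every $t\in[0,T]$:
\[
\|\ub(\eps)(t)\|_{H^1(\Omega;\ZR^3)} \leq \frac{C}{\eps}\|\gamb^\eps(\ub(\eps)(t))\|_{L^2(\Omega;\ZR^{3\times 3})}.
\]
Squaring, integrating in time, and applying the same argument to $\partial_t\ub(\eps)$ (which also belongs to $\mathcal{V}(\Omega)$ for a.e.\ $t$) yields the uniform $H^1(0,T;H^1(\Omega;\ZR^3))$ bound.

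\textbf{Step 3 (Extraction of subsequences and identification).} All the norms above are norms in reflexive Hilbert spaces, so Banach--Alaoglu yields weak limits $\ub$, $\pi^0$ and $\gamb^0$ along a common subsequence such that the first, second and fourth convergences in (\ref{konvergencije}) hold. The third convergence, $\partial_{z_3}\pi(\eps) \rightharpoonup \partial_{z_3}\pi^0$ in $L^2(0,T;L^2(\Omega))$ and weak-$*$ in $L^\infty(0,T;L^2(\Omega))$, is obtained by first extracting a weak limit $\chi$ of $\partial_{z_3}\pi(\eps)$ in these spaces (using the bound from Step~1), then identifying $\chi = \partial_{z_3}\pi^0$: for any test function $\varphi \in \mathcal{D}((0,T)\times\Omega)$, the identity $\int \partial_{z_3}\pi(\eps)\,\varphi = -\int \pi(\eps)\,\partial_{z_3}\varphi$ passes to the limit using $\pi(\eps) \rightharpoonup \pi^0$ in $L^2$, yielding $\int\chi\varphi = -\int\pi^0\partial_{z_3}\varphi$, so $\chi = \partial_{z_3}\pi^0$ in $\mathcal{D}'$.

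\textbf{Main obstacle.} There is no substantive obstacle here since the two key analytical ingredients (the energy estimates and the Korn inequality of Theorem~\ref{tKorn}) are already available. The only point that requires minor care is ensuring that the time derivative commutes correctly with $\gamb^\eps$ so that the $H^1$-in-time bound on $\frac{1}{\eps}\gamb^\eps(\ub(\eps))$ genuinely follows from the $L^2$-in-time bound on $\frac{1}{\eps}\gamb^\eps(\partial_t\ub(\eps))$; this is immediate from the linearity of $\gamb^\eps$ in its argument (as seen from (\ref{gammep})) and the regularity of $\ub(\eps)$ in $t$ coming from Proposition~\ref{epexist}.
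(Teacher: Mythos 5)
Your proof is correct and is precisely the argument the paper has in mind (the paper simply writes ``Straightforward''): combine the $L^\infty(0,T;L^2)$ bounds from Lemma~\ref{lest1}, the $L^2(0,T;L^2)$ bounds on time derivatives and the $L^\infty(0,T;L^2)$ bound on $\eps\nabla^\eps\pi(\eps)$ from Lemma~\ref{lest2} (the third component of $\eps\nabla^\eps\pi(\eps)$ being exactly $\partial_{z_3}\pi(\eps)$), feed these into the scaled Korn inequality of Theorem~\ref{tKorn} applied to both $\ub(\eps)(t)$ and $\partial_t\ub(\eps)(t)$, and extract weak limits by reflexivity and Banach--Alaoglu. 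Your remark on the linearity of $\gamb^\eps$ and the distributional identification of $\partial_{z_3}\pi^0$ are the only points requiring care, and you handle them correctly.
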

\begin{proof}
Straightforward.
\qed\end{proof}

\bigskip

Since $\frac{1}{\eps}\gamb^\eps(\ub(\eps))$ depends on $\ub(\eps)$ one expects that the limits $\ub$ and $\gamb^0$ are related. The following theorem gives the precise relationship.
It is fundamental for obtaining the limit model in the classical flexural shell derivation as well as in the present derivation, see \cite{CiarletLodsMiara}. The tensor $\gamb$ is the linearized change of metric tensor and $\rhob$ is linearized change of curvature tensor. They usually appear in shell theories as strain tensors.

\begin{theorem}[\mbox{\cite[Theorem 5.2-2]{Ciarlet3}, \cite[Lemma 3.3]{CiarletLodsMiara}}] \label{tglavni}
For any $\vb\in \cV(\Omega)$, let $\gamb^\eps (\vb) \in L^2(\Omega;\ZR^{3\times3})$  and let the tensors $\gamb(\vb), \rhob(\vb)$
belong to $L^2(\Omega;\ZR^{2\times2}), H^{-1} (\Omega; \ZR^{2\times2})$, respectively. Let the family $\{ \wbb (\eps)\}_{\eps>0} \subset \cV(\Omega)$ satisfies 
$$
\aligned
& {\wbb}(\eps) \rightharpoonup \wbb \mbox{ weakly in } H^1(\Omega;\ZR^{3}),\\
&\frac{1}{\eps}\gamb^\eps(\wbb(\eps)) \rightharpoonup {{\tilde \gamb}} \mbox{ weakly in } L^2(\Omega;\ZR^{3})
\endaligned
$$
as $\eps \to 0$. Then the limit function $\wbb $ is independent of transverse variable $z_3$, belongs to $H^1(\omega)\times H^1(\omega) \times H^2(\omega)$, satisfies the clamping boundary conditions
$$
\wbb |_{\partial \omega} = 0, \qquad \parder{w_3}{\nub}|_{\partial \omega} =0
$$
and the following  conditions
$$
\gamb(\wbb )=0, \qquad \rhob( \wbb ) \in L^2(\Omega; \ZR^{2\times2}) \mbox{ and } \parder{{\tilde \gamma }_{\alpha\beta}}{z_3} = - \rho_{\alpha\beta}(\wbb ).
$$

If in addition there is $\chib \in H^{-1}(\Omega;\ZR^{2\times2})$ such that as $\eps \to 0$
$$
\rhob(\wbb(\eps)) \to \chib \mbox{ strongly in } H^{-1}(\Omega;\ZR^{2\times2}),
$$
then
$$
\wbb (\eps) \to \wbb \mbox{ strongly in } H^1(\Omega;\ZR^3) \qquad \mbox{ and } \qquad \rhob(\wbb) = \chib \in L^2(\Omega;\ZR^{2\times 2}).
$$
\end{theorem}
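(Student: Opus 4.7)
The plan is to exploit the decomposition $\gamb^\eps(\vb) = \eps^{-1}\gamb_z(\vb) + \gamb_y(\vb) - \sum_i v_i\Gamb^i(\eps)$ together with the two convergence hypotheses. Since $\|\gamb^\eps(\wbb(\eps))\|_{L^2}=O(\eps)$ while $\gamb_y(\wbb(\eps))$ and $\sum_i w_i(\eps)\Gamb^i(\eps)$ remain bounded in $L^2$, inspecting the $(i,3)$ entries of $\gamb_z$ yields $\partial_3\wbb(\eps)\to 0$ strongly in $L^2(\Omega;\ZR^3)$. With $\wbb(\eps)\rightharpoonup\wbb$ weakly in $H^1$ this gives $\partial_3\wbb = 0$, so $\wbb$ is independent of $z_3$, and the trace $\wbb|_{\partial\omega}=0$ is inherited from the clamping on $\Gamma$. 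For the tangential entries $(\alpha,\beta)\in\{1,2\}^2$ the $\gamb_z$-block vanishes identically, so the uniform convergence $\Gamma^i_{\alpha\beta}(\eps)\to\Gamma^i_{\alpha\beta}$ in (\ref{ocjene}) together with the compact embedding $H^1\hookrightarrow L^2$ lets one pass to the limit in $(\gamb^\eps(\wbb(\eps)))_{\alpha\beta}\to 0$ and obtain $\gamma_{\alpha\beta}(\wbb)=0$.

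For the identification $\partial_3\tilde\gamma_{\alpha\beta} = -\rho_{\alpha\beta}(\wbb)$, I would insert the Taylor expansions (\ref{CS})--(\ref{CS2}) of $\Gamb^i(\eps)$ into $\gamma^\eps_{\alpha\beta}(\wbb(\eps))$ to obtain
\[
\frac{1}{\eps}\gamma^\eps_{\alpha\beta}(\wbb(\eps)) = \frac{1}{\eps}\gamma_{\alpha\beta}(\wbb(\eps)) + z_3\Big(\sum_{\kappa=1}^2 w_\kappa(\eps)\, b^\kappa_\beta|_\alpha + w_3(\eps)\sum_{\tau=1}^2 b^\tau_\alpha b_{\tau\beta}\Big) + O(\eps),
\]
where the bracketed piece converges strongly in $L^2$ to part of $-\rho_{\alpha\beta}(\wbb)$. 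The first term on the right is an indeterminate $0/0$ expression; I would recover it by testing against $z_3\varphi(y_1,y_2)$ with $\varphi\in\mathcal{D}(\omega)$ and integrating by parts in $z_3$, using the $(\alpha,3)$ entries of $\gamb^\eps$ to convert $z_3$-moments of $\partial_\alpha\partial_3 w_\beta(\eps)$ into horizontal derivatives of $w_3$ (since $\partial_3 w_\alpha(\eps)\to 0$). Tracking these contributions yields the remaining curvature terms of $-\rho_{\alpha\beta}(\wbb)$, in particular the $\partial_{\alpha\beta}w_3$ term, forces $\rho_{\alpha\beta}(\wbb)\in L^2$ and hence $w_3\in H^2(\omega)$, and the condition $\partial w_3/\partial\nub|_{\partial\omega}=0$ passes to the trace from the clamping of $\wbb(\eps)$.

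For the strong convergence under the additional hypothesis $\rhob(\wbb(\eps))\to\chib$ strongly in $H^{-1}$, I would apply the scaled Korn inequality of Theorem~\ref{tKorn} to $\wbb(\eps)-\wbb$ (after extending $\wbb$ to $\Omega$ as a function independent of $z_3$), reducing $H^1$-convergence to $L^2$-convergence of $\eps^{-1}\gamb^\eps(\wbb(\eps)-\wbb)$. Having already identified the tangential block of $\tilde\gamb$ as $-z_3\rhob(\wbb)$ plus a $z_3$-independent piece, the hypothesized strong $H^{-1}$-convergence of $\rhob(\wbb(\eps))$, combined with Rellich compactness $\wbb(\eps)\to\wbb$ strongly in $L^2$, upgrades the weak convergence of $\eps^{-1}\gamb^\eps(\wbb(\eps))$ to strong convergence in $L^2$; uniqueness of weak limits then gives $\rhob(\wbb)=\chib$. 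I expect the principal obstacle to be the identification step in the second paragraph: extracting the full curvature tensor $\rho_{\alpha\beta}(\wbb)$ from the indeterminate $\eps^{-1}\gamma_{\alpha\beta}(\wbb(\eps))$ requires carefully chosen test functions and a complete Taylor expansion of the Christoffel symbols, because second derivatives of $w_3(\eps)$ are not directly available and must be recovered in the sense of distributions.
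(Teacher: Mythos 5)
The paper does not actually prove Theorem~\ref{tglavni}: it imports it verbatim from \cite[Theorem 5.2-2]{Ciarlet3} and \cite[Lemma 3.3]{CiarletLodsMiara}. So there is no ``paper's own proof'' to compare against, and your attempt should be judged against the known proof in those references.

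Your first three stages are sound: $\|\gamb^\eps(\wbb(\eps))\|_{L^2}=O(\eps)$ does force $\gamb_z(\wbb(\eps))\to 0$ and hence $\partial_3\wbb=0$; passing to the limit in the $(\alpha,\beta)$ block together with $\Gamma^i_{\alpha\beta}(\eps)\to\Gamma^i_{\alpha\beta}$ (and $\Gamma^3_{\alpha\beta}=b_{\alpha\beta}$) and Rellich compactness gives $\gamb(\wbb)=0$. For the identification $\partial_3\tilde\gamma_{\alpha\beta}=-\rho_{\alpha\beta}(\wbb)$ your outline is in the right spirit but omits the actual technical engine: one does not recover this by ad hoc testing against $z_3\varphi$ and tracking terms, but via the sharp $H^{-1}$ estimate
$\|\eps^{-1}\partial_3\gamma^\eps_{\alpha\beta}(\vb)+\rho_{\alpha\beta}(\vb)\|_{H^{-1}(\Omega)}\le C\bigl(\sum_i\|\gamma^\eps_{i3}(\vb)\|_{L^2}+\eps\|\vb\|_{H^1}\bigr)$,
which is exactly the inequality this paper quotes as (\ref{estC}) from \cite[Theorem 5.2-1]{Ciarlet3}. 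Without this estimate your ``$0/0$'' reconstruction remains a heuristic; with it the identification, the $H^2(\omega)$ regularity of $w_3$, and the Neumann trace condition all follow cleanly.

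The final step contains a genuine gap. Applying the scaled Korn inequality of Theorem~\ref{tKorn} to $\wbb(\eps)-\wbb$ would require $\eps^{-1}\gamb^\eps(\wbb(\eps)-\wbb)$ to be controlled in $L^2$, but it is not: although $(\gamb^\eps(\wbb))_{\alpha\beta}=O(\eps)$ once $\gamb(\wbb)=0$, the $(\alpha,3)$ entries satisfy $\gamma^\eps_{\alpha 3}(\wbb)=\tfrac{1}{2}\partial_\alpha w_3+\sum_\kappa w_\kappa b^\kappa_\alpha+O(\eps)$, which is generically $O(1)$, so $\eps^{-1}\gamma^\eps_{\alpha 3}(\wbb)$ blows up like $\eps^{-1}$. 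The ``reduction to $L^2$-convergence of $\eps^{-1}\gamb^\eps(\wbb(\eps)-\wbb)$'' therefore does not close. The standard repair is to subtract not $\wbb$ but the corrected field $\wbb+\eps\wbb^1$, where $\wbb^1$ is chosen (exactly as $\vb^1$ in Step~2 of Section~\ref{S6}) to annihilate those $(\alpha,3)$ entries at leading order; even then the argument is more delicate than a bare Korn inequality, since one must first upgrade the weak $L^2$ convergence of $\eps^{-1}\gamb^\eps(\wbb(\eps))$ to strong convergence using the extra hypothesis $\rhob(\wbb(\eps))\to\chib$ in $H^{-1}$, and only afterwards invoke Korn. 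As written, your Step~5 would fail.
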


\begin{remark}\label{r*}\em
The estimates from Lemma~\ref{lest1} and Lemma~\ref{lest2} yield uniform boundedness of $\ub(\eps)$ in $C^{0,1/2}([0,T],\cV(\Omega))$, $\frac{1}{\eps}\gamb^\eps(\ub(\eps))$ in $C^{0,1/2}([0,T]; L^2(\Omega;\ZR^{3\times 3})])$ and $\pi(\eps)$ in $C^{0,1/2}([0,T]; L^2(\Omega))$. Hence by Corollary~\ref{c1} { and Aubin-Lions lemma (see \cite{Simon}), there is a subsequence such that the $\{ \ub(\eps) \} $ converges to $\ub$ also in $C([0,T]; L^2 (\Omega;\ZR^3))$. \vskip0pt
Let $\varphi \in L^2 (\Omega)$. Then for every $\delta >0$, there exists $\varphi_\delta \in C^\infty_0 (\Omega)$ such that $\displaystyle \| \varphi -\varphi_\delta \|_{L^2 (\Omega )} \leq \delta$. Next
\begin{equation}
\label{convgg}
\aligned
  &\sup_{0\leq t\leq T} | \int_\Omega\frac{\partial }{\partial x_i} \big( \ub(\eps) (t) - \ub(t) \big) \varphi \ dx |\\
  &\quad \leq  \sup_{0\leq t\leq T} |\int_\Omega \frac{\partial }{\partial x_i} \big( \ub(\eps) (t) - \ub(t) \big) (\varphi -\varphi_\delta) \ dx | + \sup_{0\leq t\leq T} | \int_\Omega \frac{\partial \varphi_\delta }{\partial x_i} \big( \ub(\eps) (t) - \ub(t) \big)  \ dx |\\
&\quad  \leq C \delta \| \ub(\eps)  - \ub \|_{C([0,T]; H^1 (\Omega;\ZR^3))} + \| \frac{\partial \varphi_\delta }{\partial x_i} \|_{L^2 (\Omega)} \|\ub(\eps)  - \ub \|_{C([0,T]; L^2 (\Omega;\ZR^3))}\leq {\overline C} \delta,
 \endaligned
\end{equation}
for $\varepsilon \leq \varepsilon_0 (\delta)$. Therefore
$$ \lim_{\varepsilon \to 0} \sup_{0\leq t\leq T} | \int_\Omega \frac{\partial }{\partial x_i} \big( \ub(\eps) (t) - \ub(t) \big) \varphi \ dx | \leq {\overline C} \delta, $$
which yields
\begin{equation}\label{convggu}
  \ub(\eps)(t) \rightharpoonup \ub(t) \mbox{ weakly in } H^1(\Omega;\ZR^3) \qquad \mbox{for every}\quad t \in [0,T].
\end{equation}
\vskip0pt
Argument for the sequences $\displaystyle \{ \frac{1}{\eps}\gamb^\eps(\ub(\eps)) \}$ and $\{ \pi(\eps)\} $ is analogous and we get
}
$$
\aligned
&\pi(\eps)(t) \rightharpoonup \pi^0(t) \mbox{ weakly in } L^2(\Omega),\\
&\frac{1}{\eps}\gamb^\eps(\ub(\eps))(t) \rightharpoonup \gamb^0(t) \mbox{ weakly in } L^2(\Omega;\ZR^{ 3 \times 3})
\endaligned
$$
 for every $t \in [0,T]$.
\end{remark}

Thus we may apply Theorem~\ref{tglavni}, with $\wbb (\eps) = \mathbf{u} (\eps) (t)$, for each $t\in [0,T]$ and conclude that the limit points of $\{ \ub(\eps)(t) \} $ belong to $\cV_F(\omega)$.

Moreover we conclude that
$$
\gamma^0_{\alpha \beta} = \ogamma_{\alpha\beta} - z_3 \rho_{\alpha\beta}(\ub),
$$
where $\ogamma_{\alpha\beta}$ do not depend on $z_3$. We denote $\ogamb = [\ogamma_{\alpha\beta}]_{\alpha,\beta=1,2}$.


\section{Derivation of the limit model}\label{S6}
\setcounter{equation}{0}

In this section we derive a two-dimensional model. We obtain it in  five steps. In the first two we take the limit in (\ref{3dw4}) for special choices of test function. In this way we additionally specify  the limits $\ub, \pi^0, \gamb^0$ and the equations they satisfy. The part $\overline \gamb $ of $\gamb^0$, which is independent of $z_3$,
  is identified in Step 3 by techniques usually applied in the proof of strong convergence of strain tensors in the classic shell models derivations.  In Step 4 we prove the strong convergence of displacements, while in Step 5 we prove the strong convergence of stress tensors.

\textsc{Step 1} (Identification of ${\gamma}^0_{i3}$).
Now we are in a position to take the limit as $\eps \to 0$ in (\ref{3dw4}) with the first equation divided by $\eps$: 
$$
\aligned
&  \int_{\Omega} \cC \left(\Qtbb(\eps)\frac{1}{\eps}\gamb^\eps (\ub(\eps))\Qtbb(\eps)^T \right) {  :}  \left(\Qtbb(\eps)\eps \gamb^\eps (\vb) \Qtbb(\eps)^T\right) \sqrt{g(\eps)} dz\\
& \qquad -   \alpha \int_{\Omega} \pi(\eps) \tr{\left(\Qtbb(\eps)\eps \gamb^\eps (\vb) \Qtbb(\eps)^T\right)} \sqrt{g(\eps)} dz
=  \eps^2 \int_{\Sigma_{\pm}}  {\mathcal P}_\pm  \cdot \vb \sqrt{g(\eps)}ds,\\
&\qquad \vb \in \cV(\Omega), 
t\in [0,T].
\endaligned
$$
In the limit we obtain
$$
\aligned
& \int_{\Omega} \cC \left(\Qtbb(0) \gamb^0 \Qtbb(0)^T\right) {  :}   \left(\Qtbb(0) \gamb_z (\vb) \Qtbb(0)^T\right) \sqrt{g(0)} dz - \alpha \int_{\Omega} \pi^0 \tr{\left(\Qtbb(0)\gamb_z (\vb) \Qtbb(0)^T\right)} \sqrt{g(0)} dz  = 0,\\
& \qquad \vb \in \cV(\Omega), 
t\in [0,T],
\endaligned
$$
which,  using  $\Qbb(0) = \Qbb$  and $g(0)=a$, yields
$$
\aligned
& \int_{\Omega} \left( \cC \left(\Qtbb  \gamb^0 \Qtbb ^T\right) - \alpha \pi^0 \Ibb \right){  :}   \left(\Qtbb  \gamb_z (\vb) \Qtbb ^T\right) \sqrt{a} dz  = 0, \quad \vb \in \cV(\Omega), \mbox{ a.e. } t\in [0,T].
\endaligned
$$
From the definition of $\gamb_z$ and the function space $\cV(\Omega)$ we obtain
$$
\aligned
\left(\Qtbb^T\left( \cC \left(\Qtbb  \gamb^0 \Qtbb ^T\right) - \alpha \pi^0 \Ibb \right) \Qtbb\right)_{i3} = 0, \qquad i=1,2,3.
\endaligned
$$
This implies
$$
\aligned
\left( \left( \lambdat \tr{ \left(\Qtbb  \gamb^0 \Qtbb ^T\right)} - \alpha \pi^0 \right)\Qtbb^T \Qtbb+ 2\mut \Qtbb^T \Qtbb  \gamb^0 \Qtbb ^T \Qtbb\right)_{i3} = 0, \qquad i=1,2,3.
\endaligned
$$
Since
\begin{equation}\label{QTQ}
\Qtbb^T\Qtbb = \left[\begin{array}{cc}
\Abb^c & 0\\
0 & 1
\end{array}\right]
\end{equation}
we obtain expressions for the third column of $\gamb^0$ in terms of the remaining elements
\begin{equation}\label{tgamb3}
(\Qtbb^T \Qtbb \gamb^0)_{13}=(\Qtbb^T \Qtbb \gamb^0)_{23}=  \lambdat \tr{ \left( \Qtbb ^T \Qtbb  \gamb^0\right)} - \alpha \pi^0 + 2\mut\gamma^0_{33} = 0.
\end{equation}
The first two equations imply that
$$
\Abb^c \left[ \begin{array}{c}
\gamma^0_{13} \\ \gamma^0_{23}
\end{array}\right] =0
$$
and since $\Abb^c$ is positive definite we obtain that $\gamma^0_{13}=\gamma^0_{31}=\gamma^0_{23}=\gamma^0_{32}=0$.
From the third equation in (\ref{tgamb3}) we obtain
$$
\lambdat \Abb^c {  :}  \left[\begin{array}{cc}
\gamma^0_{11} & \gamma^0_{12}\\
\gamma^0_{12} & \gamma^0_{22}
\end{array}\right] - \alpha \pi^0  + (\lambdat +2\mut) \gamma^0_{33}=0.
$$
Thus we have obtained the following result.

\begin{lemma}\label{lstep1}
$$
\aligned
&\gamma^0_{13}=\gamma^0_{31}=\gamma^0_{23}=\gamma^0_{32}=0,\\
&\gamma^0_{33} = \frac{\alpha}{\lambdat+2\mut} \pi^0 -\frac{\lambdat}{\lambdat+2\mut} \Abb^c {  :}  \left[\begin{array}{cc}
\gamma^0_{11} & \gamma^0_{12}\\
\gamma^0_{12} & \gamma^0_{22}
\end{array}\right].
\endaligned
$$
\end{lemma}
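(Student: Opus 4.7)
The plan is to identify the transverse components of $\gamb^0$ by exploiting the highest-order terms as $\eps \to 0$ in the mechanical equation of (\ref{3dw4}). First I would divide the first equation in (\ref{3dw4}) by $\eps$ and pass to the limit, using the asymptotic properties (\ref{ocjene}) for the geometric coefficients (so that $\Qtbb(\eps) \to \Qtbb$, $\sqrt{g(\eps)} \to \sqrt{a}$ uniformly), the weak convergences of Corollary~\ref{c1}, and the observation that $\eps \gamb^\eps(\vb) \to \gamb_z(\vb)$ strongly in $L^2$ for each fixed $\vb \in \cV(\Omega)$. Since the right-hand side carries a factor $\eps^2$, it vanishes in the limit. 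This produces the variational identity
\begin{equation*}
\int_\Omega \bigl[\cC(\Qtbb \gamb^0 \Qtbb^T) - \alpha \pi^0 \Ibb\bigr] : (\Qtbb \gamb_z(\vb)\Qtbb^T)\sqrt{a}\,dz = 0, \quad \vb \in \cV(\Omega).
\end{equation*}

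Next I would exploit the algebraic structure of $\gamb_z(\vb)$, whose only nonzero entries are in positions $(1,3),(2,3),(3,3)$ (and their symmetric counterparts) and depend freely on $\partial_3 v_1, \partial_3 v_2, \partial_3 v_3$. Choosing $\vb$ supported away from $\partial \omega \times [-1/2,1/2]$ with prescribed $\partial_3 v_i$, density arguments yield that the $(i,3)$-entries, $i=1,2,3$, of the symmetric matrix $\Qtbb^T[\cC(\Qtbb \gamb^0 \Qtbb^T) - \alpha\pi^0\Ibb]\Qtbb$ vanish pointwise almost everywhere. Expanding $\cC$ and using the block form (\ref{QTQ}) for $\Qtbb^T\Qtbb$ gives a linear $3\times 3$ algebraic system in the components of $\gamb^0$.

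The first two relations reduce to $\Abb^c(\gamma^0_{13},\gamma^0_{23})^T = 0$, and uniform positive definiteness of $\Abb^c$ forces $\gamma^0_{13} = \gamma^0_{23} = 0$ (with $\gamma^0_{31}=\gamma^0_{32}=0$ by symmetry of $\gamb^0$). The third relation
\begin{equation*}
\lambdat\, \Abb^c : \bigl[\gamma^0_{\alpha\beta}\bigr]_{\alpha,\beta=1,2} + (\lambdat+2\mut)\gamma^0_{33} - \alpha\pi^0 = 0
\end{equation*}
is then solvable for $\gamma^0_{33}$, giving the stated expression. The main technical point to watch is the passage to the limit in the triple product $\Qtbb(\eps) \tfrac{1}{\eps}\gamb^\eps(\ub(\eps))\Qtbb(\eps)^T$ tested against $\Qtbb(\eps)\eps\gamb^\eps(\vb)\Qtbb(\eps)^T$: each weak factor is paired with strongly converging ones thanks to (\ref{ocjene}), so no compensated-compactness issue arises, and the identification is purely algebraic once the limit identity is obtained.
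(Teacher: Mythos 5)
Your proposal is correct and follows essentially the same route as the paper's Step 1: divide the first equation of (\ref{3dw4}) by $\eps$, pass to the limit using $\eps\gamb^\eps(\vb)\to\gamb_z(\vb)$ together with (\ref{ocjene}) and the weak convergences, note the right-hand side is $O(\eps^2)$, read off that $\bigl(\Qtbb^T[\cC(\Qtbb\gamb^0\Qtbb^T)-\alpha\pi^0\Ibb]\Qtbb\bigr)_{i3}=0$, and solve the resulting algebraic system using the block structure (\ref{QTQ}) and positive definiteness of $\Abb^c$. The paper is slightly terser about how arbitrariness of $\partial_3 v_i$ gives the pointwise vanishing of the $(i,3)$-entries, but your localization/density remark is exactly the intended argument.
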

From this lemma and Theorem~\ref{tglavni} we have that $\gamb^0$ is of the following form
$$
\gamb^0 = \left[\begin{array}{cc}
\ogamb-z_3 \rhob(\ub) &\begin{array}{c}
0\\0\end{array}\\
\begin{array}{cc}
0 & 0
\end{array}
&
\frac{\alpha}{\lambdat+2\mut} \pi^0 - \frac{\lambdat}{\lambdat+2\mut} \Abb^c {  :}  (\ogamb - z_3 \rhob(\ub))
\end{array}\right].
$$

\textsc{Step 2} (Taking the second limit).
Let $\vb \in \cV_F(\omega)$, where $\cV_F(\omega)$ is given in (\ref{v}), and let $\vb^1$ be given by
$$
\aligned
v^1_1 (z) & = - (\partial_1 v_3   + 2v_1 b^1_1 + 2v_2 b^2_1)z_3,\\
v^1_2 (z) & = - (\partial_2 v_3   + 2v_1 b^1_2 + 2v_2 b^2_2)z_3,\\
v^1_3 (z) & =  0.
\endaligned
$$
Then
$$
\gamb_z(\vb)=\gamb_z(\vb^1) + \gamb_y(\vb) - {  \sum_{i=1}^3 } v_i \Gamb^i(0) =0
$$
and $\vb(\eps) = \vb + \eps \vb^1 \in \cV(\Omega)$.


A simple calculation shows that $\frac{1}{\eps}\gamb^\eps(\vb(\eps)) = \Thetab (\vb)+ \eps \Gbb$, where
$$\Thetab(\vb)=\gamb_y(\vb^1) -  {  \sum_{i=1}^2 } v^1_i \Gamb^i(0) - {  \sum_{i=1}^3 } v_i (z_3
\left(\frac{\partial}{\partial y_3}
 \Gamb^{i}\right) (0))$$
 and $\Gbb$ is bounded in $L^\infty(\overline{\omega};\ZR^{3\times 3})$. For the test function $\vb(\eps)$  equation (\ref{3dw4}) now reads
$$
\aligned
&  \int_{\Omega} \cC \left(\Qtbb(\eps)\frac{1}{\eps}\gamb^\eps (\ub(\eps))\Qtbb(\eps)^T \right)  :  \left(\Qtbb(\eps) \frac{1}{\eps}\gamb^\eps (\vb(\eps)) \Qtbb(\eps)^T\right) \sqrt{g(\eps)} dz\\
& \qquad -   \alpha \int_{\Omega} \pi(\eps) \tr{\left(\Qtbb(\eps)\frac{1}{\eps}\gamb^\eps (\vb(\eps)) \Qtbb(\eps)^T\right)} \sqrt{g(\eps)} dz 
  =   \int_{\Sigma_{\pm}}  {\mathcal P}_\pm  \cdot \vb (\eps) \sqrt{g(\eps)}ds,\\
&\qquad \vb \in H^1_0(\omega;\ZR^3), 
t\in [0,T],\\
&   \int_{\Omega} \koefdva \parder{\pi(\eps)}{t} q \sqrt{g(\eps)} dz +   \int_{\Omega} \alpha \parder{}{t} \tr{\left(\Qtbb(\eps) \frac{1}{\eps} \gamb^\eps(\ub(\eps)) \Qtbb(\eps)^T \right)}  q \sqrt{g(\eps)} dz\\
&\qquad + \eps^2 \int_{\Omega} {  \Qtbb(\eps) }\nabla^\eps  \pi(\eps)  \cdot {  \Qtbb(\eps) } \nabla^\eps  q  \sqrt{g(\eps)} dz=   \mp\int_{\Sigma_{\pm} } \Vred  q \sqrt{g(\eps)} ds, \\ &\qquad q \in H^1(\Omega), \mbox{ a.e. }
t\in [0,T].
\endaligned
$$
In the limit when $\eps \to 0$ we obtain
\begin{equation}\label{3dw7}
\aligned
&  \int_{\Omega} \cC \left(\Qtbb \gamb^0   \Qtbb^T \right)  :  \left(\Qtbb  \Thetab(\vb) \Qtbb^T\right) \sqrt{a} dz -   \alpha \int_{\Omega} \pi^0 \tr{\left(\Qtbb \Thetab(\vb) \Qtbb^T\right)} \sqrt{a} dz\\
 &\qquad=   \int_{\omega}  (\cP_++\cP_-)  \cdot \vb \sqrt{a}ds ,
\qquad \vb \in H^1_0(\omega;\ZR^3), \mbox{ a.e. } t\in [0,T],\\
&   \int_{\Omega} \koefdva \parder{\pi^0}{t} q \sqrt{a} dz +   \int_{\Omega} \alpha \parder{}{t} \tr{\left(\Qtbb  \gamb^0 \Qtbb^T \right)}  q \sqrt{a} dz +  \int_{\Omega} \parder{\pi^0}{z_3} \Qtbb \eb_3  \cdot \parder{q}{z_3}  \Qtbb \eb_3  \sqrt{a} dz\\
& \qquad =  \mp\int_{\Sigma_{\pm}}  \Vred  q \sqrt{a} ds, \qquad q \in H^1(\Omega), 
t\in [0,T].
\endaligned
\end{equation}
Note that $\Qtbb \eb_3  \cdot \Qtbb \eb_3  = 1$.

According to Lemma~\ref{lTheta} (in the Appendix) one has
$$
\Thetab(\vb) = - z_3  \left[\begin{array}{cc}
\rhob(\vb)   & \begin{array}{c}\displaystyle \sum_{\kappa=1}^2 b^\kappa_1 (\partial_\kappa v_3 +  \sum_{\tau=1}^2 v_\tau b^\tau_\kappa) \\ \displaystyle  \sum_{\kappa=1}^2 b^\kappa_2 (\partial_\kappa v_3 + \sum_{\tau=1}^2 v_\tau b^\tau_\kappa)
\end{array}\\
\begin{array}{cc}
\displaystyle \sum_{\kappa=1}^2 b^\kappa_1 ( \partial_\kappa v_3 +  \sum_{\tau=1}^2 v_\tau b^\tau_\kappa ) & \displaystyle  \sum_{\kappa=1}^2 b^\kappa_2 (\partial_\kappa v_3 + \sum_{\tau=1}^2 v_\tau b^\tau_\kappa)
\end{array} & 0
\end{array}\right].
$$
Thus, using (\ref{QTQ}) we obtain
\begin{equation}\label{trTheta}
\aligned
\tr{(\Qtbb \Thetab(\vb) \Qtbb^T)} &= \tr{(\Qtbb^T \Qtbb \Thetab(\vb))} =
- z_3 \Abb^c {  :}  \rhob(\vb).
\endaligned
\end{equation}
Next, using Lemma~\ref{lstep1} {  and Remark \ref{r*}}, we compute
\begin{equation}\label{trtgama}
\aligned
\tr{(\Qtbb \gamb^0 \Qtbb^T)} &= \tr{(\Qtbb^T \Qtbb \gamb^0)} = \Abb^c {  :}  \left[\begin{array}{cc}
\gamma^0_{11} & \gamma^0_{12}\\
\gamma^0_{12} & \gamma^0_{22}
\end{array}\right] + \gamma^0_{33}\\
&=
\Abb^c {  :}  \left[\begin{array}{cc}
\gamma^0_{11} & \gamma^0_{12}\\
\gamma^0_{12} & \gamma^0_{22}
\end{array}\right] +
\frac{\alpha}{\lambdat+2\mut} \pi^0 -\frac{\lambdat}{\lambdat+2\mut} \Abb^c {  :}  \left[\begin{array}{cc}
\gamma^0_{11} & \gamma^0_{12}\\
\gamma^0_{12} & \gamma^0_{22}
\end{array}\right]\\
&=
\frac{2\mut}{\lambdat+2\mut} \Abb^c {  :}  \left[\begin{array}{cc}
\gamma^0_{11} & \gamma^0_{12}\\
\gamma^0_{12} & \gamma^0_{22}
\end{array}\right] +
\frac{\alpha}{\lambdat+2\mut} \pi^0\\
&=
\frac{2\mut}{\lambdat+2\mut} \Abb^c {  :}  \left[\begin{array}{cc}
\ogamma_{11} & \ogamma_{12}\\
\ogamma_{12} & \ogamma_{22}
\end{array}\right] - z_3 \frac{2\mut}{\lambdat+2\mut} \Abb^c {  :}  \rhob(\ub) +
\frac{\alpha}{\lambdat+2\mut} \pi^0.
\endaligned
\end{equation}
Further, using (\ref{QTQ}) and Lemma~\ref{lstep1} we obtain
\begin{equation}\label{gama}
\aligned
\Qtbb^T \Qtbb \gamb^0 \Qtbb^T \Qtbb &= \left[\begin{array}{cc}
\Abb^c (\ogamb - z_3 \rhob(\ub)) \Abb^c & 0 \\
0 & \gamma^0_{33}
\end{array}\right]\\
&= \left[\begin{array}{cc}
\Abb^c (\ogamb - z_3 \rhob(\ub)) \Abb^c & 0 \\
0 & \frac{\alpha}{\lambdat+2\mut} \pi^0 -\frac{\lambdat}{\lambdat+2\mut} \Abb^c {  :}  (\ogamb - z_3 \rhob(\ub))
\end{array}\right].
\endaligned
\end{equation}
The main elastic term is now
$$
\aligned
&\hspace{-2ex}  \int_{\Omega} \cC \left(\Qtbb \gamb^0   \Qtbb^T \right) {  :}  \left(\Qtbb  \Thetab(\vb) \Qtbb^T\right) \sqrt{a} dz\\
&= \int_{\Omega} \lambdat \tr{\left(\Qtbb \gamb^0   \Qtbb^T \right)}  \tr{\left(\Qtbb \Thetab(\vb)   \Qtbb^T \right)} + 2 \mut \Qtbb \gamb^0   \Qtbb^T {  :}  \Qtbb  \Thetab(\vb) \Qtbb^T \sqrt{a} dz\\
&= \int_{\Omega} \lambdat \left( \frac{2\mut}{\lambdat+2\mut} \Abb^c {  :}  \ogamb - z_3 \frac{2\mut}{\lambdat+2\mut} \Abb^c {  :}  \rhob(\ub) +
\frac{\alpha}{\lambdat+2\mut} \pi^0 \right)  \left(
- z_3 \Abb^c {  :}  \rhob(\vb)\right) \sqrt{a}dz\\
&\quad + \int_{\Omega} 2 \mut \Qtbb^T \Qtbb \gamb^0  \Qtbb^T \Qtbb  {  :}    \Thetab(\vb)  \sqrt{a} dz\\
&= \int_{\Omega}
(z_3)^2 \frac{2\lambdat \mut}{\lambdat+2\mut} \Abb^c {  :}  \rhob(\ub) \Abb^c {  :}  \rhob(\vb)
+
\frac{\lambdat  \alpha}{\lambdat+2\mut} \pi^0   \left(
- z_3 \Abb^c {  :}  \rhob(\vb)\right) \sqrt{a}dz\\
&\quad + \int_{\Omega} 2 \mut
(z_3)^2 \Abb^c  \rhob(\ub) \Abb^c   {  :}    \rhob(\vb)
\sqrt{a} dz.
\endaligned
$$
Using the tensor  $ \tcC$, defined by (\ref{effcoef}),  
the elastic term can now be written by 
\begin{equation}\label{mainelastic}
\aligned
&\hspace{-2ex}  \int_{\Omega} \cC \left(\Qtbb \gamb^0   \Qtbb^T \right) {  :}  \left(\Qtbb  \Thetab(\vb) \Qtbb^T\right) \sqrt{a} dz\\
&= 
\frac{\hnatri}{12} \int_\omega \tcC (\Abb^c\rhob(\ub))  :  \rhob(\vb)\Abb^c \sqrt{a} dz_1 dz_2
+ \int_\Omega
\frac{\lambdat  \alpha}{\lambdat+2\mut} \pi^0   \left(
- z_3 \Abb^c {  :}  \rhob(\vb)\right) \sqrt{a}dz.
\endaligned
\end{equation}

The first equation from (\ref{3dw7}) now becomes: for
all $\vb \in \cV_F(\omega)$ one has
$$
\aligned
&
\frac{\hnatri}{12} \int_\omega \tcC (\Abb^c\rhob(\ub))  :  \rhob(\vb)\Abb^c \sqrt{a} dz_1 dz_2
+ \int_\Omega
\frac{\lambdat  \alpha}{\lambdat+2\mut} \pi^0   \left(
- z_3 \Abb^c  :  \rhob(\vb)\right) \sqrt{a}dz\\
&\qquad -   \alpha \int_{\Omega} \pi^0 \left(
- z_3 \Abb^c  :  \rhob(\vb)\right) \sqrt{a} dz
=   \int_{\omega}  ({\mathcal P}_+ + {\mathcal P}_-)  \cdot \vb \sqrt{a} dz_1 dz_2.
\endaligned
$$
This implies
\begin{equation}\label{druga}
\aligned
&  \frac{\hnatri}{12} \int_\omega \tcC (\Abb^c\rhob(\ub)) {  :}  \rhob(\vb)\Abb^c \sqrt{a} dz_1 dz_2 + \frac{2 \mut  \alpha}{\lambdat+2\mut}\int_\omega  {  \bigg(} \int_{-\hnaj/2}^{\hnaj/2} z_3 \pi^0 dz_3 {  \bigg)} \Abb^c {  :}  \rhob(\vb) \sqrt{a}dz_1 dz_2\\
 &\quad =   \int_{\omega}  ({\mathcal P}_+ + {\mathcal P}_-)  \cdot \vb \sqrt{a} dz_1 dz_2, \qquad \vb \in \cV_F({  \omega}).
\endaligned
\end{equation}
The matrix $\ogamb$ does not appear in (\ref{druga}) and is not important in the classical shell theory. However this is not the case here since the term $\ogamb$ appears in the second equation in (\ref{3dw7}). It will turn out to be 0 in the proof of the strong convergence, see Step 3. The equation (\ref{druga}) appears in the classical flexural shell model without the pressure $\pi^0$ term.

The second equation in (\ref{3dw7}) can be now written {as}
$$
\aligned
&   \int_{\Omega} \koefdva \parder{\pi^0}{t} q \sqrt{a} dz +   \int_{\Omega} \alpha \parder{}{t} \left( \frac{2\mut}{\lambdat+2\mut} \Abb^c  :  \ogamb - z_3 \frac{2\mut}{\lambdat+2\mut} \Abb^c  :  \rhob(\ub) +
\frac{\alpha}{\lambdat+2\mut} \pi^0\right)  q \sqrt{a} dz\\
&\qquad +  \int_{\Omega} \parder{\pi^0}{z_3} \parder{q}{z_3}   \sqrt{a} dz =  \mp \int_{\Sigma_{\pm}} \Vred  q \sqrt{a} ds, \qquad q \in H^1(\Omega).
\endaligned
$$
Then
\begin{equation}\label{treca}
\aligned
&   \int_{\Omega} \left(\koefdva + \frac{\alpha^2}{\lambdat+2\mut}\right)\parder{\pi^0}{t} q \sqrt{a} dz +   \int_{\Omega} \alpha \parder{}{t} \left( \frac{2\mut}{\lambdat+2\mut} \Abb^c {  :}  \ogamb - z_3 \frac{2\mut}{\lambdat+2\mut} \Abb^c {  :}  \rhob(\ub)\right)  q \sqrt{a} dz\\
&\qquad +  \int_{\Omega} \parder{\pi^0}{z_3} \parder{q}{z_3}   \sqrt{a} dz
=  \mp \int_{\Sigma_{\pm}} \Vred  q \sqrt{a} ds, \qquad q \in H^1(\Omega).
\endaligned
\end{equation}
A flexural poroelastic shell model will follow from 
(\ref{druga}), (\ref{treca}) once $\ogamb$ is determined.
\vskip0pt
 \begin{remark}\label{otherpress} Let us set
$$P^G = \pi^0 +  \frac{2\alpha}{\beta ({\tilde \lambda} +2)+\alpha^2} \Abb^c  :  \ogamb  $$
Then the couple $\{\ub , P^G \}$ satisfies the system
\begin{gather}
 \frac{\hnatri}{12} \int_\omega \tcC (\Abb^c\rhob(\ub))  : \rhob(\vb)\Abb^c \sqrt{a} dz_1 dz_2 + \frac{2 \mut  \alpha}{\lambdat+2\mut}\int_\omega   \bigg( \int_{-\hnaj/2}^{\hnaj/2} z_3 P^G dz_3  \bigg) \Abb^c  :  \rhob(\vb) \sqrt{a}dz_1 dz_2 \notag \\
  =   \int_{\omega}  ({\mathcal P}_+ + {\mathcal P}_-)  \cdot \vb \sqrt{a} dz_1 dz_2, \qquad \vb \in \cV_F( \omega). \label{druga1} \\
 \int_{\Omega} \left(\koefdva + \frac{\alpha^2}{\lambdat+2\mut}\right)\parder{P^G}{t} q \sqrt{a} dz -   \int_{\Omega} \alpha  z_3 \frac{2\mut}{\lambdat+2\mut} \Abb^c  :  \parder{}{t}  \rhob(\ub)  q \sqrt{a} dz \notag \\
 +  \int_{\Omega} \parder{P^G}{z_3} \parder{q}{z_3}   \sqrt{a} dz
=  \mp \int_{\Sigma_{\pm}} \Vred  q \sqrt{a} ds, \qquad q \in H^1(\Omega).\label{treca1}
\end{gather}
Analogously to Section \ref{subConv}, we prove that system (\ref{druga1})--(\ref{treca1}) has a unique solution. It yields  convergence of the whole sequence $\{\ub(\eps) \}$. Unfortunately, it is still not enough to have conclusions for $\{ \pi (\eps) \}$.
  \end{remark}

\textsc{Step 3} (Identification of $\ogamb$ and the strong convergence of the strain tensor and the pressure).
We start with
$$
\aligned
\Lambda (\eps)(t) &= \frac{1}{2}\int_{\Omega} \cC \left(\Qtbb(\eps)\left(\frac{1}{\eps}\gamb^\eps (\ub(\eps))(t) - \gamb^0(t)\right)\Qtbb(\eps)^T \right)\\
& :  \left(\Qtbb(\eps)\left(\frac{1}{\eps}\gamb^\eps (\ub(\eps))(t)- \gamb^0(t)\right)\Qtbb(\eps)^T \right) \sqrt{g(\eps)} 
 +\frac{1}{2} \koefdva \int_\Omega (\pi(\eps)(t) - \pi^0(t))^2 \sqrt{g(\eps)} dz\\
&\quad + \eps^2 \int_0^t \int_\Omega (\nabla^\eps \pi(\eps) - \nabla^\eps \pi^0) \Qtbb(\eps)^T \cdot (\nabla^\eps \pi(\eps) - \nabla^\eps \pi^0) \Qtbb(\eps)^T \sqrt{g(\eps)} dz.
\endaligned
$$
We will show that $\Lambda(\eps) \to \Lambda$ as $\eps$ tends to zero. Since $\Lambda(\eps)\geq 0$ the $\Lambda\geq 0$ as well. After some calculation we will show that actually  $\Lambda=0$ and $\ogamb=0$. This will  imply the strong convergence in (\ref{konvergencije}).

Since we have only weak convergences in (\ref{konvergencije}) we first remove quadratic terms in $\Lambda(\eps)$ using (\ref{3dw3}) divided by $\eps^3$. Integration of (\ref{3dw3}) over time, using $\pi(\eps)|_{t=0}=0$ and $\ub(\eps)|_{t=0}=0$, implies
$$
\aligned
& \frac{1}{2}  \int_{\Omega} \cC \left(\Qtbb(\eps)\frac{1}{\eps}\gamb^\eps (\ub(\eps))\Qtbb(\eps)^T \right) {  :}  \left(\Qtbb(\eps) \frac{1}{\eps}\gamb^\eps (\ub(\eps)) \Qtbb(\eps)^T\right) \sqrt{g(\eps)} dz\\
& \qquad
+ \frac{1}{2} \koefdva \int_{\Omega}  \pi(\eps)^2 \sqrt{g(\eps)} dz
+ \eps^2 \int_0^t \int_{\Omega} {  \Qtbb(\eps)} \nabla^\eps  \pi(\eps)  \cdot {  \Qtbb(\eps)} \nabla^\eps  \pi(\eps)   \sqrt{g(\eps)} dz d\tau\\
& = \int_0^t \int_{\Sigma_{\pm}}  {\mathcal P}_\pm  \cdot \parder{\ub(\eps)}{t} \sqrt{g(\eps)}ds  d\tau   \mp \int_0^t \int_{\Sigma_{\pm}} \Vred  \pi(\eps) \sqrt{g(\eps)} ds d\tau.
\endaligned
$$
Inserting this into the definition of $\Lambda(\eps)$ we obtain
$$
\aligned
\Lambda (\eps)(t) &=
\int_0^t \int_{\Sigma_{\pm}}  {\mathcal P}_\pm  \cdot \parder{\ub(\eps)}{t} \sqrt{g(\eps)}ds d\tau \mp \int_0^t \int_{\Sigma_{\pm}} \Vred  \pi(\eps) \sqrt{g(\eps)} ds d\tau \\
&\quad - \int_{\Omega} \cC \left(\Qtbb(\eps)\frac{1}{\eps}\gamb^\eps (\ub(\eps))(t)\Qtbb(\eps)^T \right) {  :}  \left(\Qtbb(\eps) \gamb^0(t) \Qtbb(\eps)^T \right) \sqrt{g(\eps)} dz\\
&\quad -  \koefdva \int_{\Omega}  \pi(\eps) \pi^0 \sqrt{g(\eps)} dz\\
&\quad - 2 \eps^2 \int_0^t \int_{\Omega} {  \Qtbb(\eps)} \nabla^\eps  \pi(\eps)(t)  \cdot {  \Qtbb(\eps)}\nabla^\eps  \pi^0(t)   \sqrt{g(\eps)} dz d\tau\\
&\quad + \frac{1}{2}\int_{\Omega} \cC \left(\Qtbb(\eps) \gamb^0 \Qtbb(\eps)^T \right) {  :}  \left(\Qtbb(\eps)\gamb^0\Qtbb(\eps)^T \right) \sqrt{g(\eps)} dz
+\frac{1}{2}  \koefdva \int_{\Omega}  (\pi^0)^2 \sqrt{g(\eps)} dz\\
&\quad + \eps^2 \int_0^t \int_{\Omega} {  \Qtbb(\eps)} \nabla^\eps  \pi^0  \cdot {  \Qtbb(\eps)} \nabla^\eps  \pi^0   \sqrt{g(\eps)} dz d\tau.
\endaligned
$$
Now we take the limit as $\eps$ tends to zero and obtain that $\Lambda(\eps) \to \Lambda \geq 0$, where
\begin{equation}\label{Lambda}
\aligned
&\Lambda =
\int_0^t \int_{\omega}  ({\mathcal P}_+ + \cP_-)  \cdot \parder{\ub}{t} \sqrt{a}ds d\tau \mp \int_0^t \int_{\Sigma_{\pm}} \Vred  \pi^0 \sqrt{a} ds d\tau \\
& - \frac{1}{2} \int_{\Omega} \cC \left(\Qtbb \gamb^0 \Qtbb^T \right)  :  \left(\Qtbb \gamb^0 \Qtbb^T \right) \sqrt{a} dz
-  \frac{1}{2}\koefdva \int_{\Omega}  (\pi^0)^2 \sqrt{a} dz - \int_0^t \int_{\Omega} \left(\parder{\pi^0}{z_3}\right)^2 \sqrt{a} dz d\tau.
\endaligned
\end{equation}
We now insert $\displaystyle \parder{\ub}{t}$ as a test function in (\ref{druga}), $\pi^0$ in (\ref{treca}) and sum up the equations.
$$
\aligned
&  \frac{\hnatri}{12} \frac{1}{2} \frac{d}{dt} \int_\omega \tcC (\Abb^c\rhob(\ub)) :  (\rhob(\ub)\Abb^c) \sqrt{a} dz_1 dz_2 + \frac{2 \mut  \alpha}{\lambdat+2\mut}\int_\omega   \bigg( \int_{-\hnaj/2}^{\hnaj/2} z_3 \pi^0 dz_3  \bigg) \Abb^c  : \rhob( \ub) \sqrt{a}dz_1 dz_2\\
&  + \frac{1}{2} \frac{d}{dt} \int_{\Omega} \left(\koefdva + \frac{\alpha^2}{\lambdat+2\mut}\right) (\pi^0)^2 \sqrt{a} dz +   \int_{\Omega} \alpha \parder{}{t} \left( \frac{2\mut}{\lambdat+2\mut} \Abb^c  :  \ogamb - z_3 \frac{2\mut}{\lambdat+2\mut} \Abb^c  :  \rhob(\ub)\right)  \pi^0 \sqrt{a} dz\\
&\qquad +  \int_{\Omega} \left(\parder{\pi^0}{z_3}\right)^2   \sqrt{a} dz =   \int_{\omega}  ({\mathcal P}_+ + {\mathcal P}_-)  \cdot \parder{\ub}{t} \sqrt{a} dz_1 dz_2
\mp \int_{\Sigma_{\pm}} \Vred  \pi^0 \sqrt{a} ds.
\endaligned
$$
The anti-symmetric terms cancel out as before. We integrate the equation over time and use the initial conditions to obtain
\begin{equation}\label{1}
\aligned
&  \frac{\hnatri}{12} \frac{1}{2} \int_\omega \tcC (\Abb^c\rhob(\ub)) {  :}  (\rhob(\ub)\Abb^c) \sqrt{a} dz_1 dz_2 +  \frac{1}{2} \int_{\Omega} \left(\koefdva + \frac{\alpha^2}{\lambdat+2\mut}\right) (\pi^0)^2 \sqrt{a} dz\\
&\qquad +  \int_0^t \int_{\Omega} \left(\parder{\pi^0}{z_3}\right)^2   \sqrt{a} dz d\tau +   \int_0^t \int_{\Omega} \alpha  \frac{2\mut}{\lambdat+2\mut} \Abb^c {  :}  \parder{\ogamb}{t} \pi^0 \sqrt{a} dz d\tau\\
&\quad  =  \int_0^t \int_{\omega}  ({\mathcal P}_+ + {\mathcal P}_-)  \cdot \parder{\ub}{t} \sqrt{a} ds d\tau
\mp \int_0^t \int_{\Sigma_{\pm}} \Vred  \pi^0 \sqrt{a} ds d\tau .
\endaligned
\end{equation}
Next we compute the elastic energy
\allowdisplaybreaks[4]
\begin{align*}
&\hspace{-3ex}\int_{\Omega} \cC \left(\Qtbb \gamb^0 \Qtbb^T \right)  :  \left(\Qtbb \gamb^0 \Qtbb^T \right) \sqrt{a} dz\\
&=  \int_{\Omega} \lambdat (\tr{\left(\Qtbb \gamb^0\Qtbb^T \right)})^2 + 2 \mut \Qtbb^T \Qtbb\gamb^0\Qtbb^T \Qtbb  :   \gamb^0 \sqrt{a} dz\\
&= \int_{\Omega} \lambdat \left(\frac{2\mut}{\lambdat+2\mut} \Abb^c  :  \ogamb - z_3 \frac{2\mut}{\lambdat+2\mut} \Abb^c  : \rhob(\ub) +
\frac{\alpha}{\lambdat+2\mut} \pi^0\right)^2\sqrt{a} dz\\
&\quad + \int_\Omega 2 \mut \bigg(
\Abb^c (\ogamb - z_3 \rhob(\ub)) \Abb^c  :  (\ogamb - z_3 \rhob(\ub))
+ \left( \frac{\alpha}{\lambdat+2\mut} \pi^0 -\frac{\lambdat}{\lambdat+2\mut} \Abb^c  :  (\ogamb - z_3 \rhob(\ub))\right)^2 \bigg) \sqrt{a} dz\\
&=   \int_{\Omega} \lambdat \bigg(2 \frac{\alpha}{\lambdat+2\mut} \frac{2\mut}{\lambdat+2\mut} \pi^0 \Abb^c  :  \ogamb - 2 \frac{\alpha}{\lambdat+2\mut} \frac{2\mut}{\lambdat+2\mut} \pi^0 z_3  \Abb^c  :  \rhob(\ub)\\
&\qquad + \left(\frac{2\mut}{\lambdat+2\mut}\right)^2 (\Abb^c  :  \ogamb)^2 + (z_3)^2 \left(\frac{2\mut}{\lambdat+2\mut}\right)^2 \left( \Abb^c  :  \rhob(\ub)\right)^2 +
\left(\frac{\alpha}{\lambdat+2\mut}\right)^2 (\pi^0)^2\bigg)\sqrt{a} dz\\
&\quad + \int_\Omega 2 \mut \bigg(
\Abb^c \ogamb  \Abb^c  :  \ogamb  + (z_3)^2 \Abb^c \rhob(\ub) \Abb^c  :   \rhob(\ub)\\
&\qquad  - 2 \frac{\alpha}{\lambdat+2\mut} \frac{\lambdat}{\lambdat+2\mut} \pi^0  \Abb^c  :  \ogamb + 2 \frac{\alpha}{\lambdat+2\mut}  \frac{\lambdat}{\lambdat+2\mut} \pi^0 z_3 \Abb^c  :   \rhob(\ub)\\
&\qquad + \left(\frac{\alpha}{\lambdat+2\mut}\right)^2 (\pi^0)^2 + \left(\frac{\lambdat}{\lambdat+2\mut} \right)^2 (\Abb^c  :  \ogamb)^2 + (z_3)^2 \left(\frac{\lambdat}{\lambdat+2\mut}\right)^2 (\Abb^c  :   \rhob(\ub))^2 \bigg) \sqrt{a} dz\\
&=  \int_\Omega \bigg( \left(\frac{\lambdat (2 \mut)^2}{(\lambdat+2\mut)^2} + \frac{2\mut\lambdat^2}{(\lambdat+2\mut)^2}  \right)  (\Abb^c  :  \ogamb)^2 + 2\mut \Abb^c \ogamb  \Abb^c  :  \ogamb\\
&\qquad + (z_3)^2\left( \left(\frac{\lambdat(2\mut)^2}{(\lambdat+2\mut)^2} + \frac{2\mut \lambdat^2}{(\lambdat+2\mut)^2}\right)\left( \Abb^c  :  \rhob(\ub)\right)^2 + 2\mut\Abb^c \rhob(\ub) \Abb^c  :   \rhob(\ub)\right)\\
&\qquad +\left(  \lambdat \frac{\alpha^2}{(\lambdat+2\mut)^2} + 2\mut \frac{\alpha^2}{(\lambdat+2\mut)^2}  \right) (\pi^0)^2
\bigg)\sqrt{a} dz\\
&\quad +\int_\Omega \bigg( \left(  2\lambdat \alpha \frac{2\mut}{(\lambdat+2\mut)^2}  - 4\mut \alpha \frac{\lambdat}{(\lambdat+2\mut)^2}\right)  \pi^0 \Abb^c  :  \ogamb\\
&\qquad + \left(  - \lambdat  2 \frac{\alpha}{\lambdat+2\mut} \frac{2\mut}{\lambdat+2\mut} + 4\mut  \frac{\alpha}{\lambdat+2\mut}  \frac{\lambdat}{\lambdat+2\mut}\right)\pi^0 z_3 \Abb^c  :   \rhob(\ub)\bigg)\sqrt{a} dz\\
&=  \int_\Omega \left( \tcC (\Abb^c \ogamb)   :  \ogamb \Abb^c + (z_3)^2 \tcC (\Abb^c \rhob(\ub))  :   \rhob(\ub)\Abb^c  + \frac{\alpha^2}{\lambdat+2\mut} (\pi^0)^2\right)\sqrt{a} dz.
\end{align*}
Insertion of the above equality and (\ref{1}) in (\ref{Lambda}), yields
$$
\aligned
\Lambda &=
\frac{\hnatri}{12} \frac{1}{2} \int_\omega \tcC (\Abb^c\rhob(\ub)) {  :}  (\rhob(\ub)\Abb^c) \sqrt{a} dz_1 dz_2 +  \frac{1}{2} \int_{\Omega} \left(\koefdva + \frac{\alpha^2}{\lambdat+2\mut}\right) (\pi^0)^2 \sqrt{a} dz\\
&\quad +  \int_0^t \int_{\Omega} \left(\parder{\pi^0}{z_3}\right)^2   \sqrt{a} dz d\tau +   \int_0^t \int_{\Omega} \alpha  \frac{2\mut}{\lambdat+2\mut} \Abb^c {  :}  \parder{\ogamb}{t} \pi^0 \sqrt{a} dz d\tau\\
&\quad - \frac{1}{2} \int_\Omega \left( \tcC (\Abb^c \ogamb)   {  :}  \ogamb \Abb^c + (z_3)^2 \tcC (\Abb^c \rhob(\ub)) {  :}   \rhob(\ub)\Abb^c  + \frac{\alpha^2}{\lambdat+2\mut} (\pi^0)^2\right)\sqrt{a} dz \\
&\quad -  \frac{1}{2}\koefdva \int_{\Omega}  (\pi^0)^2 \sqrt{a} dz - \int_0^t \int_{\Omega} \left(\parder{\pi^0}{z_3}\right)^2 \sqrt{a} dz d\tau\endaligned $$
$$
\aligned
&=
\int_0^t \int_{\Omega} \alpha  \frac{2\mut}{\lambdat+2\mut} \Abb^c {  :} \parder{\ogamb}{t} \pi^0 \sqrt{a} dz d\tau - \frac{1}{2} \int_\Omega \left( \tcC (\Abb^c \ogamb)   {  :}  \ogamb \Abb^c\right)\sqrt{a} dz.
\endaligned
$$
From (\ref{treca}) for $q$ independent of transversal variable we obtain 
$$
\aligned
&   \parder{}{t}\int_{\Omega} \left( \left(\koefdva + \frac{\alpha^2}{\lambdat+2\mut}\right)\pi^0 + \alpha  \frac{2\mut}{\lambdat+2\mut} \Abb^c  :  \ogamb\right)  q \sqrt{a} dz  =  0, \qquad q \in H^1(\omega).
\endaligned
$$
This implies that
\begin{equation}\label{pi0}
\alpha  \frac{2\mut}{\lambdat+2\mut} \Abb^c {  :}  \parder{\ogamb}{t} = - \left(\koefdva + \frac{\alpha^2}{\lambdat+2\mut}\right)\parder{}{t} \int_{-\hnaj/2}^{\hnaj/2} \pi^0 dz_3.
\end{equation}
Inserting (\ref{pi0}) into $\Lambda$ yields
\begin{equation}\label{lambda1}
\aligned
\Lambda &= - \left(\koefdva + \frac{\alpha^2}{\lambdat+2\mut}\right)  \int_0^t \int_{\Omega} \parder{}{t} \left(\int_{-\hnaj/2}^{\hnaj/2} \pi^0 dz_3\right) \pi^0 \sqrt{a} dz d\tau - \frac{1}{2} \int_\Omega \left( \tcC (\Abb^c \ogamb)    :  \ogamb \Abb^c\right)\sqrt{a} dz\\
&= - \left(\koefdva + \frac{\alpha^2}{\lambdat+2\mut}\right)  \frac{1}{2} \int_0^t \frac{d}{dt}\int_{\omega} \left(\int_{-\hnaj/2}^{\hnaj/2} \pi^0 dz_3\right)^2 \sqrt{a} dz_1 dz_2 d\tau - \frac{1}{2} \int_\Omega \left( \tcC (\Abb^c \ogamb)    :  \ogamb \Abb^c\right)\sqrt{a} dz\\
&= - \left(\koefdva + \frac{\alpha^2}{\lambdat+2\mut}\right)  \frac{1}{2} \int_{\omega} \left(\int_{-\hnaj/2}^{\hnaj/2} \pi^0 dz_3\right)^2 \sqrt{a} dz_1 dz_2 d\tau - \frac{1}{2} \int_\Omega \left( \tcC (\Abb^c \ogamb)    :  \ogamb \Abb^c\right)\sqrt{a} dz,
\endaligned
\end{equation}
where in the last equation we have used 
  that $\pi^0|_{t=0}=0$.
Since $\Lambda \geq 0$ by definition and since the right hand side is nonpositive we conclude that $\Lambda=0$. Positivity of $\tcC$ implies $\ogamb=0$ and thus the strain $\gamb^0$ is fully determined by $\ub$ and $\pi^0$
$$
\gamb^0 = \left[\begin{array}{cc}
-z_3 \rhob(\ub) &\begin{array}{c}
0\\0\end{array}\\
\begin{array}{cc}
0 & 0
\end{array}
&
\frac{\alpha}{\lambdat+2\mut} \pi^0 + z_3 \frac{\lambdat}{\lambdat+2\mut} \Abb^c  :  \rhob(\ub)
\end{array}\right].
$$
Moreover, from (\ref{pi0}) we obtain that $\int_{-\hnaj/2}^{\hnaj/2} \pi^0 =0$ and thus the poroelastic flexural shell model is given by
\begin{equation}\label{druga10}
\aligned
&  \frac{\hnatri}{12} \int_\omega \tcC (\Abb^c\rhob(\ub))  :  \rhob(\vb)\Abb^c \sqrt{a} dz_1 dz_2 + \frac{2 \mut  \alpha}{\lambdat+2\mut}\int_\omega  \int_{-\hnaj/2}^{\hnaj/2} z_3 \pi^0 dz_3  \Abb^c  :  \rhob(\vb) \sqrt{a}dz_1 dz_2\\
 &\quad =   \int_{\omega}  ({\mathcal P}_+ + {\mathcal P}_-)  \cdot \vb \sqrt{a} dz_1 dz_2, \qquad \vb \in \cV_F(\omega).
\endaligned
\end{equation}
\begin{equation}\label{treca10}
\aligned
&   \int_{\Omega} \left(\koefdva + \frac{\alpha^2}{\lambdat+2\mut}\right)\parder{\pi^0}{t} q \sqrt{a} dz -  \int_{\Omega} \alpha    \frac{2\mut}{\lambdat+2\mut} \Abb^c  :  \rhob(\parder{\ub}{t}) z_3 q \sqrt{a} dz +  \int_{\Omega} \parder{\pi^0}{z_3} \parder{q}{z_3}   \sqrt{a} dz\\
& \qquad =  \mp \int_{\Sigma_{\pm}} \Vred  q \sqrt{a} ds, \qquad q \in H^1(\Omega).
\endaligned
\end{equation}
We now have $\Lambda(\eps)(t) \to 0$ for every $t\in [0,T]$. Since $\Lambda(\eps): [0,T] \to \ZR$ is an equicontinuous family, 
we conclude strong convergences of the strain tensor and the pressure
\begin{equation}\label{jake1}
\aligned
&\frac{1}{\eps}\gamb^\eps (\ub(\eps)) \to \gamb^0 \qquad \mbox{ strongly in } C([0,T];L^2(\Omega;\ZR^{3\times 3})),\\
& \pi (\eps) \to \pi^0 \qquad \mbox{ strongly in } C([0,T];L^2(\Omega)),\\
&\parder{\pi(\eps)}{z_3} \to \parder{\pi^0}{z_3} \qquad \mbox{ strongly in } L^2(0,T;L^2(\Omega)).
\endaligned
\end{equation}

\textsc{Step 4} (Strong convergence for displacements).
We prove the strong convergence in two steps. In the first step we use the last part of Theorem~\ref{tglavni} and prove pointwise convergence of $\ub(\eps)$. Due to equicontinuity it then implies the uniform convergence, i.e., we obtain
$$
\ub(\eps) \to \ub\qquad \mbox{ strongly in } C([0,T]; H^1(\Omega;\ZR^3)).
$$

\begin{lemma}\label{l1}
$$
\rhob(\ub(\eps)) \to \rhob(\ub) \qquad \mbox{ strongly in } C([0,T];H^{-1}(\Omega;\ZR^{2\times 2})).
$$
\end{lemma}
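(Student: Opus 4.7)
The plan is to decompose $\rhob(\ub(\eps)) - \rhob(\ub)$ into (i) contributions involving only $u_i(\eps)-u_i$ and first tangential derivatives $\partial_\kappa(u_j(\eps)-u_j)$ ($\kappa=1,2$, $i,j=1,2,3$) multiplied by smooth midsurface data, and (ii) the genuinely second-order term $\partial_{\alpha\beta}(u_3(\eps)-u_3)$. For (i), Corollary~\ref{c1} gives boundedness of all entries in $L^2(\Omega)$ together with weak convergence; the compact embedding $L^2 \hookrightarrow H^{-1}$ then yields strong convergence in $H^{-1}(\Omega)$, and the $t$-equicontinuity argument of Remark~\ref{r*} (based on the $H^1$-in-time bounds of Corollary~\ref{c1} and Aubin--Lions) promotes this to strong convergence in $C([0,T];H^{-1}(\Omega))$. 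The real work is in handling (ii).

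For that I would derive the following Ciarlet--Lods--Miara-type identity from the definitions
\begin{align*}
\gamma^\eps_{\alpha\beta}(\vb) &= \tfrac{1}{2}(\partial_\alpha v_\beta + \partial_\beta v_\alpha) - \textstyle\sum_i v_i \Gamma^i_{\alpha\beta}(\eps),\\
\gamma^\eps_{\alpha 3}(\vb) &= \tfrac{1}{2\eps}\partial_3 v_\alpha + \tfrac{1}{2}\partial_\alpha v_3 - \textstyle\sum_i v_i \Gamma^i_{\alpha 3}(\eps),
\end{align*}
by computing $\partial_\beta \gamma^\eps_{\alpha 3}(\vb) + \partial_\alpha \gamma^\eps_{\beta 3}(\vb) - \tfrac{1}{\eps}\partial_3 \gamma^\eps_{\alpha\beta}(\vb)$: the cross terms $\partial_\alpha\partial_3 v_\beta$ and $\partial_\beta\partial_3 v_\alpha$ cancel, leaving
\[
\partial_{\alpha\beta} v_3 = \partial_\beta \gamma^\eps_{\alpha 3}(\vb) + \partial_\alpha \gamma^\eps_{\beta 3}(\vb) - \partial_3\!\bigl[\tfrac{1}{\eps}\gamma^\eps_{\alpha\beta}(\vb)\bigr] + L^\eps_{\alpha\beta}(\vb),
\]
where $L^\eps_{\alpha\beta}(\vb)$ is a linear combination of $v_i$, $\partial_\kappa v_i$ ($\kappa=1,2$) and $\tfrac{1}{\eps}\partial_3 v_i$ with coefficients assembled from $\Gamma^i_{jk}(\eps)$, $\partial_\kappa \Gamma^i_{jk}(\eps)$ and $\tfrac{1}{\eps}\partial_3\Gamma^i_{jk}(\eps)$. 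By the expansions (\ref{ocjene})--(\ref{CS2}), all these coefficients remain uniformly bounded in $\eps$ and converge in $L^\infty(\overline{\Omega})$.

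Evaluating at $\vb = \ub(\eps)$: since $\tfrac{1}{\eps}\gamma^\eps_{\alpha 3}(\ub(\eps))$ is bounded in $L^2$ by Lemma~\ref{lest1}, we have $\gamma^\eps_{\alpha 3}(\ub(\eps)) = \eps \cdot O(1) \to 0$ strongly in $C([0,T];L^2(\Omega))$, hence $\partial_\beta\gamma^\eps_{\alpha 3}(\ub(\eps)) + \partial_\alpha\gamma^\eps_{\beta 3}(\ub(\eps)) \to 0$ strongly in $C([0,T];H^{-1}(\Omega))$. By (\ref{jake1}), $\tfrac{1}{\eps}\gamma^\eps_{\alpha\beta}(\ub(\eps)) \to -z_3 \rho_{\alpha\beta}(\ub)$ strongly in $C([0,T];L^2(\Omega))$, hence $-\partial_3\bigl[\tfrac{1}{\eps}\gamma^\eps_{\alpha\beta}(\ub(\eps))\bigr] \to \rho_{\alpha\beta}(\ub)$ strongly in $C([0,T];H^{-1}(\Omega))$. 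Finally, for $L^\eps_{\alpha\beta}(\ub(\eps))$ I would note that $\tfrac{1}{\eps}\partial_3 u_i(\eps)$ rewrites, via the defining relations for $\gamma^\eps_{i3}(\ub(\eps))$ and $\gamma^\eps_{33}(\ub(\eps))$, as a combination of $\gamma^\eps_{i3}(\ub(\eps))$, $\partial_\alpha u_3(\eps)$ and zero-order expressions in $\ub(\eps)$; each ingredient is weakly convergent in $L^2$ and hence strongly in $H^{-1}$.

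Summing the three contributions gives $\partial_{\alpha\beta} u_3(\eps) \to \partial_{\alpha\beta} u_3$ strongly in $C([0,T];H^{-1}(\Omega))$, and then (\ref{symbrho}) produces the claimed convergence of $\rhob(\ub(\eps))$. The main obstacle is making sense of the factor $\tfrac{1}{\eps}$ multiplying $\Gamma^i_{\alpha\beta}(\eps)$ and $\partial_3\Gamma^i_{\alpha\beta}(\eps)$ inside $L^\eps_{\alpha\beta}$; this is exactly what the asymptotic expansions (\ref{ocjene})--(\ref{CS2}) of the rescaled Christoffel symbols and metric are designed to control, since $\partial_3\Gamma^i_{jk}(\eps) = \eps (\partial_{y_3}\Gamma^i_{jk})(\cdot,\eps z_3)$ absorbs the singular prefactor.
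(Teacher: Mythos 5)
Your proof is correct and rests on the same algebraic identity that underlies the paper's argument, but you derive it from scratch where the paper cites a ready-made lemma. Specifically, the paper invokes Ciarlet's Theorem 5.2-1 to obtain the bound (\ref{estC}),
\[
\bigl\|\tfrac{1}{\eps}\partial_3\gamma^\eps_{\alpha\beta}(\ub(\eps)) + \rho_{\alpha\beta}(\ub(\eps))\bigr\|_{H^{-1}(\Omega)} \le C\Bigl(\sum_{i=1}^3\|\gamma^\eps_{i3}(\ub(\eps))\|_{L^2} + \eps\|u_1(\eps)\|_{L^2} + \eps\|u_2(\eps)\|_{L^2} + \eps\|u_3(\eps)\|_{H^1}\Bigr),
\]
observes that the right-hand side is $O(\eps)$ uniformly in $t$ by Lemma~\ref{lest1} and Corollary~\ref{c1}, and then uses (\ref{jake1}) together with $\partial_3\gamma^0_{\alpha\beta} = -\rho_{\alpha\beta}(\ub)$ to conclude. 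Your Ciarlet--Lods--Miara identity for $\partial_{\alpha\beta}v_3$ is precisely the computation behind that cited lemma; the quoted estimate additionally encodes the cancellation between your remainder $L^\eps_{\alpha\beta}$ and the lower-order part of $\rho_{\alpha\beta}$, which is what makes its right-hand side sharp at $O(\eps)$. You sidestep that cancellation by treating $\partial_{\alpha\beta}u_3(\eps)$ and the lower-order piece of $\rhob(\ub(\eps))$ separately via the softer $L^2$-weak implies $H^{-1}$-strong compactness, which works; one step worth making explicit is that the strong $H^{-1}$ limit of $\partial_\beta\gamma^\eps_{\alpha3}(\ub(\eps))+\partial_\alpha\gamma^\eps_{\beta3}(\ub(\eps))-\partial_3\bigl[\tfrac{1}{\eps}\gamma^\eps_{\alpha\beta}(\ub(\eps))\bigr]+L^\eps_{\alpha\beta}(\ub(\eps))$ coincides with $\partial_{\alpha\beta}u_3$ by uniqueness of weak limits (since $u_3(\eps)(t)\rightharpoonup u_3(t)$ in $H^1(\Omega)$ for each $t$), not merely by ``summing the contributions.'' Net effect: same mathematics, yours self-contained and slightly longer, the paper's terse by citation.
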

\begin{proof}
From \cite[Theorem 5.2-1]{Ciarlet3} for $\alpha, \beta =1,2$ we have the estimate (for a.e. $t \in [0,T]$)
\begin{equation}\label{estC}
\aligned
&\|\frac{1}{\eps} \parder{}{z_3} \gamma^\eps_{\alpha\beta}(\ub(\eps)) + \rho_{\alpha\beta}(\ub(\eps))\|_{H^{-1}(\Omega)}\\
&\qquad \leq C (\sum_{i=1}^3 \|\gamma^\eps_{i3}(\ub(\eps))\|_{L^2(\Omega)} + \eps \|u_1(\eps)\|_{L^2(\Omega)} + \eps \|u_2 (\eps)\|_{L^2(\Omega)}+ \eps \|u_3(\eps)\|_{H^1(\Omega)}),
\endaligned
\end{equation}
for $C$ independent of $\eps$.
By the strong convergence (\ref{jake1}) scaled transformed symmetrized gradient
$\frac{1}{\eps}\gamb^\eps (\ub(\eps))$ is bounded in $C([0,T]; L^2(\Omega;\ZR^{3\times 3}))$
and by the a priori estimates from Corollary~\ref{c1} $\ub(\eps)$ is bounded in $C([0,T]; H^1(\Omega;\ZR^3))$. Therefore the right hand side in (\ref{estC}) tends to zero uniformly with respect to $t\in[0,T]$. Thus we obtain
\begin{equation}\label{jakaskoro}
\frac{1}{\eps} \parder{}{z_3} \gamma^\eps_{\alpha\beta}(\ub(\eps)) + \rho_{\alpha\beta}(\ub(\eps)) \to 0 \qquad \mbox{ strongly in } C([0,T]; H^{-1} (\Omega)), \quad \alpha, \beta=1,2.
\end{equation}
From (\ref{jake1}) we have that
$$
\frac{1}{\eps} \parder{}{z_3}\gamb^\eps(\ub(\eps)) \to \parder{}{z_3}\gamb^0  \qquad \mbox{ strongly in } C([0,T]; H^{-1} (\Omega;\ZR^{3\times 3})).
$$
Using this convergence in (\ref{jakaskoro}) we obtain that functions $\rhob(\ub(\eps))$ converge strongly in $C([0,T]; H^{-1}(\Omega;\ZR^{2\times 2}))$.
\qed
\end{proof}

\begin{theorem}\label{tjaka}
$$
\ub(\eps) \to \ub \qquad \mbox{ strongly in } C([0,T]; H^1(\Omega;\ZR^3)).
$$
\end{theorem}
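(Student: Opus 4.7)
The plan is to apply the last part of Theorem~\ref{tglavni} pointwise in $t \in [0,T]$, and then upgrade the resulting pointwise strong convergence in $H^1$ to uniform convergence in $C([0,T]; H^1)$ via an Arzel\`a--Ascoli-type equicontinuity argument.

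First, I would fix $t \in [0,T]$ arbitrarily and verify the hypotheses of Theorem~\ref{tglavni} for the family $\wbb(\eps) := \ub(\eps)(t)$. From Remark~\ref{r*} we already have the pointwise weak convergences $\ub(\eps)(t) \rightharpoonup \ub(t)$ in $H^1(\Omega;\ZR^3)$ and $\frac{1}{\eps}\gamb^\eps(\ub(\eps))(t) \rightharpoonup \gamb^0(t)$ in $L^2(\Omega;\ZR^{3\times 3})$ for every $t$. Lemma~\ref{l1} provides the strong convergence $\rhob(\ub(\eps))(t) \to \rhob(\ub)(t)$ in $H^{-1}(\Omega;\ZR^{2\times 2})$ (uniformly in $t$, but pointwise convergence is all that is needed here). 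Theorem~\ref{tglavni} then directly yields
$$
\ub(\eps)(t) \to \ub(t) \quad \mbox{strongly in } H^1(\Omega;\ZR^3), \qquad \mbox{for every } t \in [0,T].
$$

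Next, I would upgrade this pointwise convergence to uniform convergence. Remark~\ref{r*} records that $\{\ub(\eps)\}_{\eps>0}$ is uniformly bounded in $C^{0,1/2}([0,T]; H^1(\Omega;\ZR^3))$, so it is equicontinuous as a family of $H^1$-valued functions on $[0,T]$; moreover, the limit $\ub$ lies in $C([0,T]; \cV_F(\omega))$, hence is continuous into $H^1(\Omega;\ZR^3)$. The standard argument applies: given $\eta>0$, by continuity of $\ub$ and uniform H\"older equicontinuity of $\ub(\eps)$ choose $\delta>0$ such that $\|\ub(\eps)(t)-\ub(\eps)(s)\|_{H^1} + \|\ub(t)-\ub(s)\|_{H^1} < \eta$ whenever $|t-s|<\delta$, uniformly in $\eps$. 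Cover $[0,T]$ by finitely many balls of radius $\delta$ centered at $t_1,\dots,t_N$ and use the pointwise convergence at each $t_j$ to make $\|\ub(\eps)(t_j)-\ub(t_j)\|_{H^1}<\eta$ simultaneously for $\eps$ small; then for any $t \in [0,T]$ with $|t-t_j|<\delta$,
$$
\|\ub(\eps)(t)-\ub(t)\|_{H^1} \leq \|\ub(\eps)(t)-\ub(\eps)(t_j)\|_{H^1} + \|\ub(\eps)(t_j)-\ub(t_j)\|_{H^1} + \|\ub(t_j)-\ub(t)\|_{H^1} < 3\eta.
$$

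There is essentially no hard step left: the heavy lifting has already been done in Step~3 (strong convergence of $\gamb^\eps(\ub(\eps))/\eps$ and of $\pi(\eps)$, which feeds the $H^{-1}$-strong convergence of the curvature change tensor in Lemma~\ref{l1}) and in the a priori H\"older-in-time bounds of Remark~\ref{r*}. The only subtlety is to verify that the convergence output by Theorem~\ref{tglavni} is genuinely pointwise in $t$ rather than merely along a subsequence; this is automatic here because the limit $\ub(t)$ is already identified (uniquely, via the weak limit in Corollary~\ref{c1}), so the whole family, not just a subsequence, converges at each $t$.
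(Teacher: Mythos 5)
Your proposal is correct and follows essentially the same route as the paper: apply the last part of Theorem~\ref{tglavni} pointwise in $t$ (using the pointwise weak convergence of $\ub(\eps)(t)$ from Remark~\ref{r*} and the strong $H^{-1}$ convergence of $\rhob(\ub(\eps))(t)$ from Lemma~\ref{l1}), then promote the resulting pointwise strong convergence to uniform convergence via the H\"older-in-time equicontinuity coming from the $H^1(0,T;H^1)$ bound. The paper phrases the second step in terms of the scalar family $w(\eps)(t)=\|\ub(\eps)(t)-\ub(t)\|_{H^1}$ and invokes equicontinuity plus pointwise convergence rather than writing out the finite-net $3\eta$ argument, but these are the same argument; your extra remark about whole-sequence versus subsequence convergence is a reasonable clarification that the paper leaves implicit.
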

\begin{proof}
From Lemma~\ref{l1} and Remark~\ref{r*}  we have pointwise convergences
$$
\aligned
&\rhob(\ub(\eps))(t) \to \rhob(\ub)(t) \qquad \mbox{ strongly in } H^{-1}(\Omega;\ZR^{2\times 2}),\\
&\ub(\eps)(t) \rightharpoonup \ub(t)\qquad \mbox{ weakly in } H^{1}(\Omega;\ZR^3),
\endaligned
$$
for every $t\in [0,T]$. Thus the last part of the Theorem~\ref{tglavni} (taken from \cite{Ciarlet3}) implies
$$
\ub(\eps)(t) \rightharpoonup \ub(t)\qquad \mbox{ strongly in } H^{1}(\Omega;\ZR^3),
$$
for all $t \in [0,T]$. This implies that the function $w(\eps):[0,T]\to \ZR$ given by
$$
w(\eps)(t) = \|\ub(\eps)(t) - \ub(t)\|_{H^1(\Omega;\ZR^3)}
$$
converges pointwisely to zero. From the uniform estimate of $\ub(\eps)$ in $H^1(0,T;H^1(\Omega;\ZR^3))$ we obtain equicontinuity of the family $(w(\eps))_{\eps>0}$. This implies the uniform convergence of $w(\eps)$. This implies the statement of the theorem.
\qed
\end{proof}

\textsc{Step 5} (Strong convergence for the stress tensor).
As a consequence of the convergence of the term $\frac{1}{\eps}\gamb^\eps(\ub(\eps))$ from (\ref{jake1}) we obtain the convergence of the scaled stress tensor.

\smallskip

\noindent{\bf Proof of Corollary \ref{cstress11}.}
We compute $\sigma$ in the local basis given by $\Qbb$ using (\ref{trtgama}) and (\ref{gama}) and that $\ogamb =0$. We obtain
$$
\aligned
\Qtbb^T \sigma \Qtbb &= \lambdat \tr{(\Qtbb \gamb^0 \Qtbb^T)} \Qtbb^T \Qtbb + 2 \mut \Qtbb^T \Qtbb \gamb^0 \Qtbb^T \Qtbb - \alpha \pi^0 \Qtbb^T\Qtbb\\
&=  \left(\frac{\alpha\lambdat}{\lambdat+2\mut} \pi^0 -  z_3 \frac{2\mut \lambdat}{\lambdat+2\mut} \Abb^c {  :}  \rhob(\ub) - \alpha \pi^0\right)\left[\begin{array}{cc}
\Abb^c &0\\
0 & 1 \end{array}\right]\\
&\quad + 2 \mut  \left[\begin{array}{cc}
- z_3 \Abb^c \rhob(\ub) \Abb^c & 0 \\
0 & \frac{\alpha}{\lambdat+2\mut} \pi^0 + z_3 \frac{\lambdat}{\lambdat+2\mut} \Abb^c {  :}  \rhob(\ub))
\end{array}\right]\\
&=  \left(-\frac{2 \alpha}{\lambdat+2\mut} \pi^0 -  z_3 \frac{2\mut \lambdat}{\lambdat+2\mut} \Abb^c {  :}  \rhob(\ub) \right)\left[\begin{array}{cc}
\Abb^c &0\\
0 & 0 \end{array}\right] -  \mut  \left[\begin{array}{cc}
2 z_3 \Abb^c \rhob(\ub) \Abb^c & 0 \\
0 & 0
\end{array}\right]\\
&=  \left[\begin{array}{cc}
-\frac{2 \alpha}{\lambdat+2\mut} \pi^0 \Abb^c -  z_3 \left( \frac{2\mut \lambdat}{\lambdat+2\mut} (\Abb^c {  :}  \rhob(\ub)) \Ibb + 2 \Abb^c \rhob(\ub)\right) \Abb^c &0\\
0 & 0 \end{array}\right].
\endaligned
$$
\qed




\section{Appendix}
\subsection{Properties of the metric tensor, the curvature tensor and the third fundamental form}\label{append}

Some symmetry properties of geometric coefficients are listed in the following lemma. For the proof see \cite{CiarletDG}.
\begin{lemma}\label{lbaza}
The following symmetries hold ($\alpha,\beta,\kappa \in\{1,2\}$)
$$
a_{\alpha \beta}=a_{\beta \alpha},
\quad
a^{\alpha \beta}=a^{\beta \alpha},
\quad
b_{\alpha \beta}=b_{\beta \alpha},
\quad
\Gamma^\kappa_{\alpha \beta}=\Gamma^\kappa_{\beta \alpha}.
$$
The change from the basis to basis is done using
$$
\ab_\alpha=\sum_{\kappa=1}^2 a_{\alpha \kappa}  \ab^\kappa, \qquad
\ab^\alpha=\sum_{\kappa=1}^2 a^{\alpha \kappa}  \ab_\kappa.
$$
Moreover, one has
$$
b^\tau_\alpha|_\beta=b^\tau_\beta|_\alpha, \qquad \sum_{\kappa=1}^2 b^\kappa_\alpha b_{\kappa \beta} = \sum_{\kappa=1}^2 b^\kappa_\beta b_{\kappa \alpha}.
$$
\end{lemma}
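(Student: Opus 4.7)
The symmetries $a_{\alpha\beta}=a_{\beta\alpha}$ and $a^{\alpha\beta}=a^{\beta\alpha}$ follow at once from commutativity of the scalar product applied to the definitions $a_{\alpha\beta}=\ab_\alpha\cdot\ab_\beta$, $a^{\alpha\beta}=\ab^\alpha\cdot\ab^\beta$. For $b_{\alpha\beta}=b_{\beta\alpha}$ and $\Gamma^\kappa_{\alpha\beta}=\Gamma^\kappa_{\beta\alpha}$ the key point is that $\ab_\alpha=\partial_\alpha\Xb$, so $\partial_\beta\ab_\alpha=\partial_\beta\partial_\alpha\Xb$; since $\Xb\in C^3$, Schwarz's theorem yields $\partial_\beta\ab_\alpha=\partial_\alpha\ab_\beta$, and dotting with $\ab^3$, respectively with $\ab^\kappa$, gives the two claimed symmetries.

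For the basis-change formulas, since $\{\ab^1,\ab^2,\ab^3\}$ forms a basis of $\mathbb{R}^3$ I would expand $\ab_\alpha=\sum_{i=1}^3 c_i\ab^i$, take the scalar product with $\ab_j$, and use $\ab^i\cdot\ab_j=\delta^i_j$ to read off $c_j=\ab_\alpha\cdot\ab_j$. Since $\ab_\alpha$ lies in the tangent plane while $\ab_3=\ab^3$ is its unit normal, $c_3=0$, and $c_\kappa=a_{\alpha\kappa}$ for $\kappa=1,2$. The inverse formula $\ab^\alpha=\sum_\kappa a^{\alpha\kappa}\ab_\kappa$ follows by the same argument (or equivalently from the matrix identity $\mathbf{A}_c\mathbf{A}^c=\mathbf{I}$).

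The identity $\sum_\kappa b^\kappa_\alpha b_{\kappa\beta}=\sum_\kappa b^\kappa_\beta b_{\kappa\alpha}$ is then a relabelling argument: inserting $b^\kappa_\alpha=\sum_\sigma a^{\kappa\sigma}b_{\sigma\alpha}$ produces a double sum over $\kappa,\sigma$ of $a^{\kappa\sigma}b_{\sigma\alpha}b_{\kappa\beta}$, and interchanging the summation indices $\kappa\leftrightarrow\sigma$ together with the symmetries of $a^{\kappa\sigma}$ and $b_{\alpha\beta}$ already proved yields the right-hand side.

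The principal obstacle is the identity $b^\tau_\alpha|_\beta=b^\tau_\beta|_\alpha$, which is the classical Codazzi--Mainardi equation. My plan is to differentiate $b_{\alpha\beta}=\ab^3\cdot\partial_\beta\ab_\alpha$ with respect to $y_\gamma$, using the Weingarten formula $\partial_\gamma\ab^3=-\sum_\sigma b^\sigma_\gamma\ab_\sigma$ (valid because $\partial_\gamma\ab^3$ is tangential, as $|\ab^3|^2=1$) to rewrite the first term, and to exploit the symmetry of $\partial_\gamma\partial_\beta\ab_\alpha$ in the pair $(\beta,\gamma)$ by Schwarz. Reorganising the Christoffel contributions and identifying the covariant derivative $b_{\alpha\beta}|_\gamma$ (as given by (\ref{symbb})) yields the covariant form $b_{\alpha\beta}|_\gamma=b_{\alpha\gamma}|_\beta$; raising an index with $a^{\tau\alpha}$ and using metric compatibility $a^{\tau\alpha}|_\gamma=0$ of the Levi-Civita connection then produces the stated identity. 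The detailed computation is carried out in \cite[Theorem 2.7-1]{CiarletDG}, which the lemma already cites, so my proposal is simply to invoke that reference for this final step.
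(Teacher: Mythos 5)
The paper itself gives no proof of this lemma: it simply says ``For the proof see \cite{CiarletDG}'' and moves on, since these are standard facts about the first and second fundamental forms and the associated Christoffel symbols. Your proposal therefore does more work than the paper, and it is correct. The symmetry of $a_{\alpha\beta}$, $a^{\alpha\beta}$ by commutativity of the scalar product, the symmetry of $b_{\alpha\beta}$ and $\Gamma^\kappa_{\alpha\beta}$ by Schwarz applied to $\partial_\beta\ab_\alpha=\partial_\beta\partial_\alpha\Xb$ followed by dotting with $\ab^3$ resp.\ $\ab^\kappa$, the change-of-basis formulas by expanding $\ab_\alpha$ in $\{\ab^1,\ab^2,\ab^3\}$ and using biorthogonality together with $\ab_\alpha\cdot\ab_3=0$, and the relabelling argument $\sum_\kappa b^\kappa_\alpha b_{\kappa\beta}=\sum_{\kappa,\sigma}a^{\kappa\sigma}b_{\sigma\alpha}b_{\kappa\beta}=\sum_\kappa b^\kappa_\beta b_{\kappa\alpha}$ (using the already-established symmetries of $a^{\kappa\sigma}$ and $b_{\alpha\beta}$) are all sound. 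For the Codazzi--Mainardi identity $b^\tau_\alpha|_\beta=b^\tau_\beta|_\alpha$, your outline (Weingarten formula, Schwarz on the third derivative of $\Xb$, reorganize Christoffel terms, raise an index using metric compatibility) is the standard derivation, and deferring the detailed computation to \cite[Theorem 2.7-1]{CiarletDG} is exactly what the paper does for the whole lemma anyway. One small point of hygiene: your raising-the-index step tacitly uses that the Levi--Civita connection associated to $a_{\alpha\beta}$ satisfies $a^{\tau\alpha}|_\gamma=0$; this is itself standard but should be stated (or cited) since the paper never introduces a covariant derivative for the metric.
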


\subsection{Computation of $\Thetab(\vb)$}
\begin{lemma}\label{lTheta}
For $\vb \in \cV_F(\omega)$ and $\Thetab(\vb)$ defined in the Step 2 of the convergence proof (Section~\ref{S6}) one has
$$
\Thetab(\vb) = - z_3  \left[\begin{array}{cc}
\rhob(\vb)   & \begin{array}{c} \displaystyle \sum_{\kappa=1}^2 b^\kappa_1 (\partial_\kappa v_3 +  v_\tau b^\tau_\kappa) \\ \displaystyle \sum_{\kappa=1}^2 b^\kappa_2 (\partial_\kappa v_3 +  v_\tau b^\tau_\kappa)
\end{array}\\
\begin{array}{cc}
\displaystyle \sum_{\kappa=1}^2 b^\kappa_1 ( \partial_\kappa v_3 +  v_\tau b^\tau_\kappa ) & \displaystyle \sum_{\kappa=1}^2 b^\kappa_2 (\partial_\kappa v_3 + v_\tau b^\tau_\kappa)
\end{array} & 0
\end{array}\right].
$$
\end{lemma}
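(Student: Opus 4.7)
The approach is a direct componentwise verification, unwinding the definition
\[
\Thetab(\vb) = \gamb_y(\vb^1) - \sum_{i=1}^2 v^1_i\, \Gamb^i(0) - \sum_{i=1}^3 v_i\, z_3 \Big(\tfrac{\partial}{\partial y_3}\Gamb^{i}\Big)(0),
\]
and using the explicit matrix forms of $\Gamb^\kappa(0)$, $\Gamb^3(0)$ together with the coefficients of $\eps z_3$ read off from (\ref{CS}) and (\ref{CS2}). Since $\vb^1$ is linear in $z_3$ with $v^1_3 = 0$ and the last term carries an explicit $z_3$, the entire expression factors through $z_3$. For the $(3,3)$ entry one checks immediately that $(\gamb_y(\vb^1))_{33}=0$ (because $v^1_3=0$) while the $(3,3)$ components of $\Gamb^i(0)$ and of $(\partial_3 \Gamb^i)(0)$ all vanish by (\ref{CS})--(\ref{CS2}), so $(\Thetab(\vb))_{33}=0$ at once.

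For the $(\alpha,3)$ block with $\alpha\in\{1,2\}$, the $\gamb_y(\vb^1)$ contribution is zero. Using $(\Gamb^\kappa(0))_{\alpha 3}=-b^\kappa_\alpha$ the second term produces $-z_3\sum_\kappa b^\kappa_\alpha \partial_\kappa v_3 - 2z_3 \sum_{\kappa,\tau} b^\kappa_\alpha v_\tau b^\tau_\kappa$, and using $(z_3 \partial_3 \Gamb^\kappa(0))_{\alpha 3}=-z_3\sum_\tau b^\tau_\alpha b^\kappa_\tau$ the third term contributes $+z_3\sum_{\kappa,\tau}v_\tau b^\kappa_\alpha b^\tau_\kappa$. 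After a routine relabeling of dummy indices the two quadratic pieces combine into a single sum and one arrives at the asserted expression $-z_3\sum_\kappa b^\kappa_\alpha(\partial_\kappa v_3 + \sum_\tau v_\tau b^\tau_\kappa)$.

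The real work lies in the $2\times 2$ block $(\alpha,\beta)\in\{1,2\}^2$. Substituting $v^1_\alpha = -z_3(\partial_\alpha v_3 + 2 v_\kappa b^\kappa_\alpha)$ and expanding the symmetrized gradient $\gamb_y(\vb^1)$ produces terms of the shape $-z_3\partial_{\alpha\beta}v_3$, $-z_3 b^\kappa_\beta \partial_\alpha v_\kappa$ (with an $\alpha\leftrightarrow\beta$ twin), and $-z_3 v_\kappa \partial_\alpha b^\kappa_\beta$ (again with a twin). The $-\sum v^1_i \Gamb^i(0)$ and $-\sum v_i z_3 (\partial_3\Gamb^i)(0)$ contributions supply respectively $+z_3\sum_i (\partial_i v_3 + 2 v_\kappa b^\kappa_i) \Gamma^i_{\alpha\beta}$ and $+z_3 v_\kappa b^\kappa_\beta|_\alpha + z_3 v_3 \sum_\kappa b^\kappa_\alpha b_{\kappa\beta}$. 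Matching the resulting sum against the explicit definition (\ref{symbrho}) of $\rho_{\alpha\beta}(\vb)$, the contributions $\partial_{\alpha\beta}v_3$, $b^\kappa_\beta \partial_\alpha v_\kappa + b^\kappa_\alpha \partial_\beta v_\kappa$, $-\Gamma^\kappa_{\alpha\beta}\partial_\kappa v_3$ and $-b^\kappa_\alpha b_{\kappa\beta} v_3$ pair up directly.

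The main obstacle is then to show that the coefficients of $v_\tau$ in the leftover terms agree, i.e.
\[
\partial_\alpha b^\tau_\beta + \partial_\beta b^\tau_\alpha - 2 b^\tau_i \Gamma^i_{\alpha\beta} - b^\tau_\beta|_\alpha \;=\; b^\tau_\alpha|_\beta - b^\kappa_\beta \Gamma^\tau_{\alpha\kappa} - b^\kappa_\alpha \Gamma^\tau_{\beta\kappa}.
\]
This is the crux and is established by substituting the definition (\ref{symbb}) of the covariant derivatives to rewrite $\partial_\alpha b^\tau_\beta$ and $\partial_\beta b^\tau_\alpha$ in terms of $b^\tau_\beta|_\alpha$, $b^\tau_\alpha|_\beta$ and Christoffel contractions; the symmetry $\Gamma^\sigma_{\alpha\beta}=\Gamma^\sigma_{\beta\alpha}$ from Lemma~\ref{lbaza} then cancels the $\pm 2\Gamma^\sigma_{\alpha\beta} b^\tau_\sigma$ contributions and the remaining pieces match term by term. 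Note that neither the flexural constraint $\gamb(\vb)=0$ nor the clamping boundary conditions in $\cV_F(\omega)$ enter the argument—only the smoothness of $\vb$ matters—so the identity is in fact purely algebraic.
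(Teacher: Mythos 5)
Your proof is correct and follows essentially the same route as the paper: a direct componentwise expansion of $\Thetab(\vb)$ using the explicit $\eps z_3$-coefficients of the Christoffel symbols from (\ref{CS})--(\ref{CS2}), the definition (\ref{symbb}) of the covariant derivative $b^\kappa_\beta|_\alpha$, and the symmetry $\Gamma^\kappa_{\alpha\beta}=\Gamma^\kappa_{\beta\alpha}$ for the final cancellation. You usefully isolate the one nontrivial algebraic identity that the paper's proof hides behind the phrase ``Expressing $\partial_\alpha b^\kappa_\beta$ using $b^\kappa_\beta|_\alpha$ and collecting all terms with $v_\kappa$,'' and your observation that neither $\gamb(\vb)=0$ nor the clamping conditions enter the argument is accurate.
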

\begin{proof}
Let us first denote
$$
\Xib = \sum_{\kappa=1}^2 \left[\begin{array}{cc}
\partial_1(\frac{1}{2}\partial_1 v_3   + 2v_\kappa b^\kappa_1) & \frac{1}{2}\left( \partial_2 (\frac{1}{2}\partial_1 v_3   + 2v_\kappa b^\kappa_1) + \partial_1(\frac{1}{2}\partial_2 v_3   +2 v_\kappa b^\kappa_2)\right) \\
\frac{1}{2}\left( \partial_2 (\frac{1}{2}\partial_1 v_3   +2 v_\kappa b^\kappa_1) + \partial_1(\frac{1}{2} \partial_2 v_3   + 2v_\kappa b^\kappa_2)\right) & \partial_2(\frac{1}{2}\partial_2 v_3   +2 v_\kappa b^\kappa_2)
\end{array}\right]
$$
Then using (\ref{CS}) and (\ref{CS2}) we obtain
$$
\aligned
\Thetab(\vb) =&\  z_3 \Bigg( -\Xib + \sum_{\kappa=1}^2 (\partial_\kappa v_3   + 2 \sum_{\tau=1}^2 v_\tau b^\tau_\kappa)\left[\begin{array}{ccc}
\Gamma^\kappa_{11}   & \Gamma^\kappa_{12}    & -b^\kappa_1  \\
\Gamma^\kappa_{21}   & \Gamma^\kappa_{22}  & -b^\kappa_2 \\
-b^\kappa_1   & -b^\kappa_2 &0
\end{array} \right]\\
&\ + \sum_{\kappa=1}^2 v_\kappa \left[\begin{array}{ccc}
 b^\kappa_1|_1  &  b^\kappa_2|_1 & b^\tau_1 b^\kappa_\tau  \\
 b^\kappa_1|_2  &  b^\kappa_2|_2 & b^\tau_2 b^\kappa_\tau  \\
 b^\tau_1 b^\kappa_\tau   & b^\tau_2 b^\kappa_\tau &0
\end{array} \right] + v_3 \sum_{\kappa=1}^2 \left[\begin{array}{ccc}
 b^\kappa_1 b_{\kappa 1}  & b^\kappa_1 b_{\kappa 2}   & 0  \\
 b^\kappa_2 b_{\kappa 1}  & b^\kappa_2 b_{\kappa 2} & 0 \\
0   & 0 &0
\end{array} \right]
\Bigg)\\
=&\  - z_3 \Bigg( \sum_{\kappa=1}^2 \left[\begin{array}{ccc}
\frac{1}{2} \partial_1^2 v_3   + 2\partial_1 v_\kappa b^\kappa_1 & \frac{1}{2}  \partial_1 \partial_2 v_3   + \partial_2 v_\kappa b^\kappa_1  +  \partial_1 v_\kappa b^\kappa_2   & 0\\
\frac{1}{2}  \partial_1 \partial_2 v_3   + \partial_2 v_\kappa b^\kappa_1  +  \partial_1 v_\kappa b^\kappa_2 & \frac{1}{2} \partial_2^2 v_3   +2 \partial_2 v_\kappa b^\kappa_2  & 0\\
0 & 0 & 0
\end{array}\right]\\
&\ + \sum_{\kappa=1}^2 v_\kappa \left[\begin{array}{ccc}
2 \partial_1 b^\kappa_1 &    \partial_2 b^\kappa_1 +  \partial_1 b^\kappa_2  & 0\\
 \partial_2 b^\kappa_1  +   \partial_1 b^\kappa_2  &  2  \partial_2 b^\kappa_2 & 0\\
0 & 0 & 0
\end{array}\right] - \sum_{\kappa=1}^2  \partial_\kappa v_3  \left[\begin{array}{ccc}
\Gamma^\kappa_{11}   & \Gamma^\kappa_{12}    & 0  \\
\Gamma^\kappa_{21}   & \Gamma^\kappa_{22}  & 0 \\
0   & 0 &0
\end{array} \right]\\
&\  - 2  \sum_{\kappa, \tau=1}^2 v_\kappa b^\kappa_\tau\left[\begin{array}{ccc}
\Gamma^\tau_{11}   & \Gamma^\tau_{12}    & 0  \\
\Gamma^\tau_{21}   & \Gamma^\tau_{22}  & 0 \\
0   & 0 &0
\end{array} \right]  - \sum_{\kappa=1}^2 v_\kappa \left[\begin{array}{ccc}
 b^\kappa_1|_1  &  b^\kappa_2|_1 & 0  \\
 b^\kappa_1|_2  &  b^\kappa_2|_2 & 0  \\
 0   & 0 &0
\end{array} \right]\\
&\ - v_3 \sum_{\kappa=1}^2 \left[\begin{array}{ccc}
 b^\kappa_1 b_{\kappa 1}  & b^\kappa_1 b_{\kappa 2}   & 0  \\
 b^\kappa_2 b_{\kappa 1}  & b^\kappa_2 b_{\kappa 2} & 0 \\
0   & 0 &0
\end{array} \right]
- \sum_{\kappa=1}^2 (\partial_\kappa v_3   + \sum_{\tau=1}^2 v_\tau b^\tau_\kappa)\left[\begin{array}{ccc}
0   & 0    & -b^\kappa_1  \\
0   & 0  & -b^\kappa_2 \\
-b^\kappa_1   & -b^\kappa_2 &0
\end{array} \right]
\Bigg).
\endaligned
$$
Expressing $\partial_\alpha b^\kappa_\beta$ using $b^\kappa_\beta|_\alpha$ and collecting all terms with $v_\kappa$ we obtain
$$
\aligned
\Thetab(\vb)
=
&\ - z_3 \bigg( \sum_{\kappa=1}^2 \left[\begin{array}{ccc}
\frac{1}{2}\partial_1^2 v_3   + 2\partial_1 v_\kappa b^\kappa_1 & \frac{1}{2}\partial_1 \partial_2 v_3   + \partial_2 v_\kappa b^\kappa_1  +  \partial_1 v_\kappa b^\kappa_2   & 0\\
\frac{1}{2}\partial_1 \partial_2 v_3   + \partial_2 v_\kappa b^\kappa_1  +  \partial_1 v_\kappa b^\kappa_2 & \frac{1}{2} \partial_2^2 v_3   +2 \partial_2 v_\kappa b^\kappa_2  & 0\\
0 & 0 & 0
\end{array}\right] \\
&\
+ \sum_{\kappa=1}^2 v_\kappa \left[\begin{array}{ccc}
b^\kappa_1|_1 - 2\sum_{\tau=1}^2\Gamma^\kappa_{1\tau} b^\tau_1 &    b^\kappa_1|_2 - \sum_{\tau=1}^2 \Gamma^\kappa_{2\tau} b^\tau_1 - \sum_{\tau=1}^2 \Gamma^\kappa_{1\tau} b^\tau_2 & 0\\
- \sum_{\tau=1}^2 \Gamma^\kappa_{2\tau} b^\tau_1 +  b^\kappa_2|_1 - \sum_{\tau=1}^2\Gamma^\kappa_{1\tau} b^\tau_2  &  b^\kappa_2|_2 - 2 \sum_{\tau=1}^2 \Gamma^\kappa_{2\tau} b^\tau_2 & 0\\
0 & 0 & 0
\end{array}\right] \endaligned $$
$$\aligned
&\ - \sum_{\kappa=1}^2 \partial_\kappa v_3   \left[\begin{array}{ccc}
\Gamma^\kappa_{11}   & \Gamma^\kappa_{12}    & 0  \\
\Gamma^\kappa_{21}   & \Gamma^\kappa_{22}  & 0 \\
0   & 0 &0
\end{array} \right]  - v_3 \sum_{\kappa=1}^2\left[\begin{array}{ccc}
 b^\kappa_1 b_{\kappa 1}  & b^\kappa_1 b_{\kappa 2}   & 0  \\
 b^\kappa_2 b_{\kappa 1}  & b^\kappa_2 b_{\kappa 2} & 0 \\
0   & 0 &0
\end{array} \right]\\
&\ + \sum_{\kappa=1}^2 (\partial_\kappa v_3   + \sum_{\tau=1}^2 v_\tau b^\tau_\kappa)\left[\begin{array}{ccc}
0   & 0    & b^\kappa_1  \\
0   & 0  & b^\kappa_2 \\
b^\kappa_1   & b^\kappa_2 &0
\end{array} \right]
\bigg).
\endaligned
$$
This implies the statement of the lemma.
\qed
\end{proof}
A part of the proof can be also find in \cite[Step 4 in Section 6.2]{Ciarlet3}.

\subsection{Cylindrical surface}

Let $\omega_{L}= (-L/2,L/2) \times (0,d)$
, where $d\in ( 0, 2\pi )$ and let $(z, \theta)$ denotes
the generic point in $\omega$. Let $R>0$. We define the
cylindrical shell by the parametrization
$$
\vphib : \omega_{L} \to \ZR^3, \qquad \vphib(z, \theta)=(R \cos{\theta}, R\sin{\theta},
z)^T.
$$
For $d=2\pi$ the surface is the full cylinder.
Then the extended covariant basis of the shell $S=\vphib(\omega)$
is given by
\begin{eqnarray*}
\ab_1(z, \theta) &=& \partial_z \vphib(z, \theta) = (0, 0, 1)^T,\\
\ab_2(z, \theta) &=& \partial_\theta \vphib(z, \theta) = (-R \sin{\theta}, R\cos{\theta}, 0)^T,\\
\ab_3(z, \theta) &=& \frac{\ab_1(z, \theta) \times \ab_2(z, \theta)}{|\ab_1(z, \theta) \times \ab_2(z, \theta)|} =
(-\cos{\theta}, -\sin{\theta}, 0)^T.
\end{eqnarray*}
The contravariant basis is biorthogonal and is defined by
\begin{eqnarray*}
\ab^1(z, \theta) &=&  (0, 0, 1)^T,\\
\ab^2(z, \theta) &=&  (-\frac{1}{R} \sin{\theta}, \frac{1}{R}\cos{\theta}, 0)^T,\\
\ab^3(z, \theta) &=& (-\cos{\theta}, -\sin{\theta}, 0)^T.
\end{eqnarray*}
The covariant $\Abb_c=(a_{\alpha \beta})$ and contravariant
$\Abb^c=(a^{\alpha \beta})$ metric tensors are respectively given
by
$$
\Abb_c =\left[ \begin{array}{cc} 1 & 0 \\ 0 &
R^2\end{array}\right], \qquad \Abb^c =\left[ \begin{array}{cc} 1 &
0 \\ 0 & \frac{1}{R^2}\end{array}\right]
$$
and the area element is now $\sqrt{a} dS = \sqrt{\det{\Abb_c}} dS=
R dS$. The covariant and mixed components of the curvature tensor
are now given by
\begin{equation*}
\aligned
&b_{1 1} = b_{1 2} = b_{2 1} = 0,\quad
b_{2 2} = R,\qquad b^{1}_{1}= b^{1}_{2} = b^{2}_{1} =0,\quad b^{2}_{2} = \frac{1}{R}.
\endaligned
\end{equation*}
A simple calculation shows
\begin{eqnarray*}
\Gamma^\sigma_{\alpha
\beta}&=&\ab^\sigma \cdot \partial_\beta \ab_\alpha=0,\qquad \alpha,\beta,\sigma\in \{1,2\},\\
b^\sigma_{\beta}|_{\alpha} &=& \partial_\alpha b^\sigma_\beta
+\Gamma^\sigma_{\alpha \tau} b^\tau_\beta - \Gamma^\tau_{\beta
\alpha} b^\sigma_\tau = \partial_\alpha b^\sigma_\beta = 0, \qquad \alpha,\beta,\sigma\in \{1,2\}.
\end{eqnarray*}

Now the displacement vector $\vtb$ in canonical coordinates is
rewritten in the local basis $\vtb=\Qbb\vb = v_1 \ab^1 + v_2
\ab^2 + v_3 \ab^3$. Note that contravariant basis is different than
the usual basis associated with the cylindrical coordinates. One has
$$
v_1 = v_z, \quad v_2 = R v_\theta, \quad v_3 = -v_r.
$$
Similarly, { ${\mathcal \Pt}_\pm=\Qbb^{-T}\cP_\pm = (\cP_\pm)_1 \ab_1 + (\cP_\pm)_2
\ab_2 + (\cP_\pm)_3 \ab_3$.} Thus
$$
{ (\cP_\pm)}_1 = { (\cP_\pm)}_z, \quad { (\cP_\pm)}_2 = \frac{1}{R}  { (\cP_\pm)}_\theta, \quad { (\cP_\pm)}_3 = -{ (\cP_\pm)}_r.
$$
Thus
$$
\cP_\pm \cdot \vb = { (\cP_\pm)}_1 v_1 + { (\cP_\pm)}_2 v_2 + { (\cP_\pm)}_3 v_3 = { (\cP_\pm)}_z v_z + { (\cP_\pm)}_\theta v_\theta + { (\cP_\pm)}_r v_r.
$$

Inserting {the geometry} coefficients into the strains $\gamb$ and
$\varrhob$ we obtain
$$
\aligned
\gamb(\vb) &= \left[\begin{array}{cc}
\partial_1 v_1  - \sum_{\kappa=1}^2 \Gamma^\kappa_{11} v_\kappa  -  b_{11} v_3
&
\frac{1}{2}(\partial_1 v_2 + \partial_2 v_1) - \sum_{\kappa=1}^2 \Gamma^\kappa_{12} v_\kappa  -  b_{12} v_3\\
\frac{1}{2}(\partial_1 v_2 + \partial_2 v_1) - \sum_{\kappa=1}^2 \Gamma^\kappa_{21} v_\kappa  -  b_{21} v_3
&
\partial_2 v_2  - \sum_{\kappa=1}^2 \Gamma^\kappa_{22} v_\kappa  -  b_{22} v_3
\end{array}\right]\\
&= \left[\begin{array}{cc}
\partial_z v_z
&
\frac{1}{2}(R \partial_z v_\theta + \partial_\theta v_z)  \\
\frac{1}{2}(R \partial_z v_\theta + \partial_\theta v_z)
&
R \partial_\theta v_\theta   +  R v_r
\end{array}\right],
\endaligned
$$
$$
\aligned
\rhob(\vb) &= {\left[\begin{array}{cc}
\partial_{11} v_3
&
\partial_{12} v_3 \\
 \partial_{21} v_3
&
\partial_{22} v_3
\end{array}\right]}\\
&\quad {+ \sum_{\kappa=1}^2\left[\begin{array}{cc}
 b^\kappa_1 \partial_1 v_\kappa  +  b^\kappa_1 \partial_1 v_\kappa  + b^\kappa_1|_1 v_\kappa -  b^\kappa_1 b_{\kappa1} v_3
&
 b^\kappa_2 \partial_1 v_\kappa  +   b^\kappa_1 \partial_2 v_\kappa  +   b^\kappa_1|_2 v_\kappa -  b^\kappa_1 b_{\kappa2} v_3 \\
 b^\kappa_1 \partial_2 v_\kappa  +   b^\kappa_2 \partial_1 v_\kappa  +   b^\kappa_2|_1 v_\kappa -  b^\kappa_2 b_{\kappa1} v_3
&
 b^\kappa_2 \partial_2 v_\kappa  +  b^\kappa_2 \partial_2 v_\kappa  +  b^\kappa_2|_2 v_\kappa -  b^\kappa_2 b_{\kappa2} v_3
\end{array}\right]}\\
&= \left[\begin{array}{cc}
- \partial_{zz} v_r
&
-\partial_{z\theta} v_r +  \partial_z v_\theta  \\
- \partial_{\theta z} v_r  +  \partial_z v_\theta
&
-\partial_{\theta\theta} v_r  + 2 \partial_\theta v_\theta     + v_r
\end{array}\right].
\endaligned
$$

As example we write the model on the space
\begin{equation}\label{V_F}
\aligned
\cV_F(\omega_{L}) &= \{(v_z,v_\theta,v_r)\in H^1(\omega_{L}) \times H^1(\omega_{L}) \times H^2(\omega_{L}):  (v_z,v_\theta,v_r)|_{\theta=0,d } = 0,\\
&\qquad \qquad \partial_{\theta} {v_r}|_{\theta=0,d} = 0, \quad
\partial_z v_z = \frac{1}{2}(R \partial_z v_\theta + \partial_\theta v_z) = R \partial_\theta v_\theta   +  R v_r = 0\}
\endaligned
\end{equation}
 which includes clamping boundary conditions only on two generatrices of the portion of the cylinder. For cylindrical shell fully clamped one has $\cV_F(\omega_L) = \{0\}$, and the shell behaves as the generalized membrane shell, see \cite[Section 5.8]{Ciarlet3}.
From the condition of inextensibility in (\ref{V_F}) we obtain
$$
v_z(\theta), \qquad \partial_z v_\theta = - \frac{1}{R} \partial_\theta v_z, \qquad v_r = - \partial_\theta v_\theta.
$$
Therefore (using notation $'=\partial_\theta$)
$$
v_z(\theta), \qquad v_\theta (z,\theta)= - \frac{z}{R} v_z'(\theta) + w_\theta(\theta), \qquad v_r (z,\theta)= \frac{z}{R} v_z''(\theta) - w_\theta'(\theta).
$$
Smoothness and boundary conditions for $v_z,v_\theta,v_r$ imply
$$
v_z \in H^4_0(0,d), \qquad w_\theta \in H^3_0 (0,d).
$$
Thus
$$
\aligned
\rhob(\vb)
&= \left[\begin{array}{cc}
0
&
-\frac{1}{R} (v_z''+ v_z)'   \\
-\frac{1}{R} (v_z''+ v_z)'
&
- \frac{z}{R} (v_z''+ v_z)''+ (w_\theta'' + w_\theta)'
\end{array}\right].
\endaligned
$$
Now we insert this into the model given by (\ref{main_druga})--(\ref{maincetvrta}) written in dimensional form
$$
\aligned
&  \frac{\ell^3}{12}  \int_{0}^d \int_{-L/2}^{L/2} \bigg(
\frac{2\mu  \lambda}{\lambda+2\mu} \frac{1}{R^4}\left(- \frac{z}{R} (u_z''+ u_z)''+ (\omega_\theta'' + \omega_\theta)'\right) \left(- \frac{z}{R} (v_z''+ v_z)''+ (w_\theta'' + w_\theta)'\right)\\
&\qquad + 2\mu \left(\frac{2}{R^4} (u_z''+ u_z)'(v_z''+ v_z)' +\frac{1}{R^4} (\frac{z}{R} (u_z''+ u_z)'' - (\omega_\theta'' + \omega_\theta)')(\frac{z}{R} (v_z''+ v_z)''- (w_\theta'' + w_\theta)')\right)
\bigg)  R dz d\theta\\
&\qquad + \frac{2 \mu  \alpha}{\lambda+2\mu} \int_{0}^d \int_{-L/2}^{L/2} \int_{-\ell/2}^{\ell/2} r p^0 dr  \frac{1}{R^2}\left(- \frac{z}{R} ( v_z''+ v_z)''+ (w_\theta'' + w_\theta)')\right) R dz d\theta\\
 &\quad =  \int_{0}^d \int_{-L/2}^{L/2}  \Big((({\mathcal P}^{+\ell}_L)_z +({\mathcal P}^{-\ell}_L)_z) v_z +(({\mathcal P}^{+\ell}_L)_\theta + ({\mathcal P}^{-\ell}_L)_\theta)(- \frac{z}{R} v_z' + w_\theta)\\
 &\qquad \qquad + (({\mathcal P}^{+\ell}_L)_r + ({\mathcal P}^{-\ell}_L)_r) (\frac{z}{R} v_z'' - w_\theta')\Big) R dz d\theta,
 \qquad  v_z \in H^4_0(0,d), w_\theta \in H^3_0(0,d).
\endaligned
$$
$$\aligned
&   \frac{d}{d t}\int_{-\ell/2}^{\ell/2}\int_{0}^d \int_{-L/2}^{L/2} \left(\beta_G + \frac{\alpha^2}{\lambda+2\mu}\right) p^0 q R dz d\theta dr\\
&\qquad - \frac{2 \alpha\mu}{\lambda+2\mu} \int_{-\ell/2}^{\ell/2}\int_{0}^d \int_{-L/2}^{L/2}     \frac{1}{R^2} \left(- \frac{z}{R} \partial_t (u_z''+ u_z)''+ \partial_t (\omega_\theta'' + \omega_\theta)'\right) r q R dz d\theta dr\\
&\qquad +  \frac{k}{\eta}\int_{-\ell/2}^{\ell/2}\int_{0}^d \int_{-L/2}^{L/2} \parder{p^0}{r} \parder{q}{r}   R dz d\theta dr =  \int_{0}^d \int_{-L/2}^{L/2} \Vred_L  (q(-\frac{\ell}{2}) - q(\frac{\ell}{2})) R dz d\theta\\
&\qquad \mbox{in} \quad \mathcal{D}' (0,T), \qquad q \in H^1( (-\ell/2 , \ell/2); L^2 (\omega_L)),\\
&  p^0 = 0 \qquad \mbox{at} \quad t=0.
\endaligned
$$
After integration over $z$ the first equation separates and we obtain the following problem: find
$\{ p^0 , u_z,\omega_\theta \} \in C ([0,T]; H^1 ( \Omega^\ell_L ) \times H^4_0(0,d)\times H^3_0(0,d) )$,  $\partial_{r} p^0 \in L^2 ((0,T)\times \Omega^\ell_L)$ satisfying the system
$$
\aligned
&  \frac{\ell^3}{12}  \int_{0}^d  \frac{1}{R^3} \left(
4 \mu\frac{\lambda + \mu}{\lambda+2\mu}  \frac{L^3}{12 R^2} (u_z''+ u_z)'' (v_z''+ v_z)''+ 4\mu L (u_z''+ u_z)'(v_z''+ v_z)'  \right)   d\theta\\
&\qquad -\frac{2 \mu  \alpha}{\lambda+2\mu} \frac{1}{R^2}\int_{0}^d \left(\int_{-L/2}^{L/2} \int_{-\ell/2}^{\ell/2} r p^0 dr z dz \right) ( v_z''+ v_z)'' d\theta\\
 &\quad =  \int_{0}^d \bigg( R \int_{-L/2}^{L/2}  (({\mathcal P}^{+\ell}_L)_z + ({\mathcal P}^{-\ell}_L)_z) dz v_z - \int_{-L/2}^{L/2} (({\mathcal P}^{+\ell}_L)_\theta + ({\mathcal P}^{-\ell}_L)_\theta) z dz v_z' \\
 &\qquad  + \int_{-L/2}^{L/2} (({\mathcal P}^{+\ell}_L)_r + ({\mathcal P}^{-\ell}_L)_r) z dz v_z'' \bigg) d\theta, \qquad v_z \in H^4_0(0,d),\\
&  \frac{\ell^3}{12}  \int_{0}^d  \frac{L}{R^3}
4\mu\frac{ \lambda + \mu}{\lambda+2\mu}   (\omega_\theta'' + \omega_\theta)' (w_\theta'' + w_\theta)'   d\theta + \frac{2 \mu  \alpha}{\lambda+2\mu} \frac{1}{R}\int_{0}^d \left( \int_{-L/2}^{L/2} \int_{-\ell/2}^{\ell/2} r p^0 dr dz \right) (w_\theta'' + w_\theta)'   d\theta\\
 &\quad =  \int_{0}^d \left(\int_{-L/2}^{L/2}  (({\mathcal P}^{+\ell}_L)_\theta + ({\mathcal P}^{-\ell}_L)_\theta) dz w_\theta + \int_{-L/2}^{L/2} (({\mathcal P}^{+\ell}_L)_r + ({\mathcal P}^{-\ell}_L)_r) dz w_\theta'\right) R d\theta, \qquad  w_\theta \in H^3_0(0,d),\\
&   \frac{d}{d t}\int_{-\ell/2}^{\ell/2}\int_{0}^d \int_{-L/2}^{L/2} \left(\beta_G + \frac{\alpha^2}{\lambda+2\mu}\right) p^0 q R dz d\theta dr\\
&\qquad - \frac{2 \alpha\mu}{\lambda+2\mu} \int_{-\ell/2}^{\ell/2}\int_{0}^d \int_{-L/2}^{L/2}     \frac{1}{R^2} \left(- \frac{z}{R} \partial_t (u_z''+ u_z)''+ \partial_t (\omega_\theta'' + \omega_\theta)'\right) r q R dz d\theta dr\\
&\qquad +  \frac{k}{\eta}\int_{-\ell/2}^{\ell/2}\int_{0}^d \int_{-L/2}^{L/2} \parder{p^0}{r} \parder{q}{r}   R dz d\theta dr =  \int_{0}^d \int_{-L/2}^{L/2} \Vred_L  (q(-\frac{\ell}{2}) - q(\frac{\ell}{2})) R dz d\theta\\
&\qquad \mbox{in} \quad \mathcal{D}' (0,T), \qquad q \in H^1( (-\ell/2 , \ell/2); L^2 (\omega_L)),\\
&  p^0 = 0 \qquad \mbox{at} \quad t=0.
\endaligned
$$
The terms in the shell equation appear in the classical model of linear model of cylindrical shells, see e.g. \cite{Hoefk} and \cite{Hoefbook}.

\end{document}